\documentclass[11pt]{amsart}
\usepackage{graphicx}  
\usepackage{subfig}  
\usepackage{amssymb, amsmath, amsthm}
\usepackage[colorlinks=true,linkcolor=blue,citecolor=red]{hyperref}
\hypersetup{hypertexnames=false}
\usepackage{color}
\usepackage{float}
\usepackage{enumerate}
\usepackage{graphics}
\usepackage{epstopdf}
\usepackage{tikz}
\usetikzlibrary{arrows,decorations.markings}
\usepackage{cite}
\usepackage{epsfig}
\usepackage{amsmath}
\usepackage{amssymb}
\usepackage{amscd}
\usepackage{graphicx}
\usepackage{mathrsfs}
\usepackage{needspace}
\usepackage[T1]{fontenc}
\hypersetup{hypertexnames=false}

\makeatletter
\newcommand{\equationlabels}[1]{%
  \begingroup
  \edef\@currentlabel{\theequation}%
  \edef\@currentHref{equation.\theHequation}%
  #1%
  \endgroup}
\makeatother
\numberwithin{equation}{section}
\newtheorem{theorem}{Theorem}[section]
\newtheorem{corollary}[theorem]{Corollary}
\newtheorem{lemma}[theorem]{Lemma}
\newtheorem{proposition}[theorem]{Proposition}
\newtheorem{assumption}[theorem]{Assumption}
\newtheorem{Dbarproblem}{$\bar{\partial}$-Problem}[section]
\newtheorem{dbar-RHP}[theorem]{$\bar{\partial}$-RH problem}
\theoremstyle{definition}
\newtheorem{definition}[theorem]{Definition}
\newtheorem{remark}[theorem]{Remark}
\newtheorem{RHP}[theorem]{RH Problem}

\newcommand{\R}{\mathbb{R}}

\subjclass[2000]{37K40, 35Q15, 35C20}
\keywords{Integrable system, The Tzitz\'{e}ica equation, Riemann-Hilbert problem, $\bar{\partial}$-generalization of the Deift-Zhou nonlinear steepest descent method,  soliton resolution conjecture.}

\begin{document}
	
	\title[Long-time asymptotics for the Tzitz\'{e}ica equation]{On the Cauchy problem for the Tzitz\'{e}ica equation: Soliton resolution conjecture and asymptotic analysis}
	
	
		\author[S. F. Tian]{Shou-Fu Tian$^{*}$}

    \author[J. F. Tong]{Jia-Fu Tong$^{*,\dag}$}

	\address{Shou-Fu Tian  (Corresponding author) \newline
		School of Mathematics, China University of Mining and Technology, Xuzhou 221116, China}
\email{sftian@cumt.edu.cn}
\address{Jia-Fu Tong  (Corresponding author) \newline
		School of Mathematics, China University of Mining and Technology, Xuzhou 221116, China}
\email{jftong@cumt.edu.cn}	
	
	\thanks{$^{*}$Corresponding authors(sftian@cumt.edu.cn (S.F. Tian) and jftong@cumt.edu.cn (J.F. Tong)).\\
\hspace*{3ex}$^\dag$This author is contributed equally as the first author.}
	
\begin{abstract}
We study the Cauchy problem for the Tzitz\'eica equation, which is an important integrable model arising in affine differential geometry and characterizing proper affine spheres. Recently, Huang, Wang and Zhu (Math. Ann. \textbf{395}, 18 (2026)) reported the long-time asymptotic result for the Tzitz\'eica equation for the case of the purely continuous spectrum. Inspired by their work, we conduct in-depth research on the soliton resolution conjecture and asymptotic analysis of the Tzitz\'eica equation. Compared with the previous work, by developing the $\bar{\partial}$-nonlinear steepest descent method, we reveal that the solution of the Tzitz\'eica equation exhibits three different asymptotic regions depending on $\xi:=x/t$. For the region $\xi\in(-\infty,-1]\cup[1,\infty)$, where there are no stationary phase points, we rigorously prove that the solution of the Tzitz\'eica equation decays algebraically to zero with error $O(t^{-1})$. In the region $\xi\in(-1,1)$, the phase function $\theta(z)$ has two stationary phase points. The corresponding asymptotic approximations can be characterized with an $N$-soliton solution as well as an interaction term between soliton solutions and the dispersion term with a residual error of order $O(t^{-3/4})$. In the transition regions $\{(x,t):1-\varepsilon<|\xi|<1\}$, the corresponding asymptotic approximation can be characterized by the $N$-soliton solution together with the interaction terms between the soliton solution and the dispersion term, with a residual error of order $O(t^{-3/4})$. Our results provide a rigorous analysis of the long-time asymptotic behavior of solutions of the Tzitz\'eica equation in different regions and confirm the soliton resolution conjecture for the Tzitz\'eica equation.
\end{abstract}

	\maketitle
	\tableofcontents
	\section{Introduction}
In this work, we consider the Cauchy problem for the Tzitz\'{e}ica equation with decaying initial data
\begin{equation}\label{TZI}
\begin{cases}u_{tt}-u_{xx}=e^{-2u}-e^u,\\u(x,0)=u_0(x)\in\mathcal{S}(\mathbb{R}),\quad u_t(x,0)=u_1(x)\in\mathcal{S}(\mathbb{R}).
\end{cases}\end{equation}
where $u$ is a real-valued function and Schwartz space $\mathcal{S}(\mathbb{R})= \{f(x) \in C^{\infty}(\mathbb{R}) : \sup_{x\in\mathbb{R}}|x^{\alpha}\partial^{\beta}f(x)| < \infty,\forall\alpha,\beta \in \mathbb{N}\}$. Equation \eqref{TZI} has its origins in the classical theory of surfaces in affine differential geometry. In a series of works published between 1907 and 1910 \cite{Tzitzica1907,Tzitzica1908,Tzitzica1910}, Tzitz\'eica investigated a distinguished class of hyperbolic surfaces in \(\mathbb{R}^3\) whose second fundamental form is non-degenerate. For such a surface, let \(K\) denote its Gauss curvature and let \(d\) denote the Euclidean distance from a fixed point to the tangent plane. Tzitz\'eica observed that the ratio
\[
I=\frac{K}{d^4}
\]
is invariant under equiaffine transformations of \(\mathbb{R}^3\). Surfaces for which \(I\) is constant are now called Tzitz\'eica surfaces \cite{Krivoshapko2015}. In the language of affine differential geometry, they are also known as proper affine spheres, because the affine distance from the origin is a nonzero constant \cite{Inoguchi2018}.

For an indefinite proper affine sphere with negative affine mean curvature, the Gauss--Codazzi equations with Blaschke metric $h=2e^udXdT$ in isothermal coordinates reduce to
\begin{equation}\label{TZI-original}
u_{X,T}=e^{u}-e^{-2u}.
\end{equation}
This is the Tzitz\'eica equation in its original form. Introducing the light-cone coordinates
\[
x=X+T,\qquad t=T-X,
\]
one obtains the equivalent form \eqref{TZI}.

The Tzitz\'{e}ica equation is not merely a geometric curiosity. In the theory of integrable systems, Mikhailov \cite{Mikhailov1981} showed that the Tzitz\'{e}ica \eqref{TZI} arises as a reduction of the periodic two-dimensional Toda lattice of type \(A_2^{(2)}\),
\[
u_{n,X,T}=e^{u_n-u_{n-1}}-e^{u_{n+1}-u_n},
\]
with period three under the constraint \(u_1=u\), \(u_2=-u\), and \(u_3=0\). This reduction is associated with the affine root system \(A_2^{(2)}\). Independently, as shown in \cite{Dunajski2009}, the same equation arises as a reduction of the anti-self-dual Yang--Mills equations on \(\mathbb{R}^{2,2}\) with gauge group \(SL(3,\mathbb{R})\). These connections place \eqref{TZI} within the broader framework of integrable systems and affine Toda field theories.

Beyond its geometric origin, the Tzitz\'eica equation has appeared in several physical contexts. In one-dimensional gas dynamics, it was shown by Gaffet that for a particular class of gas laws, a \((1+1)\)-dimensional anisotropic gas dynamics system may be reduced to the Tzitz\'eica equation \cite{Schief1994}. More explicitly, when the equation of state takes the form \(P=\rho^3/M^4\), where \(P\), \(\rho\), and \(M\) denote the pressure, density, and Lagrangian mass, respectively, the system of Euler equations is reducible to the Tzitz\'{e}ica \eqref{TZI} \cite{ElKalaawy2002}. In nonlinear optics, the Tzitz\'eica--Dodd--Bullough (TDB) equation, a close relative of \eqref{TZI}, serves as a paradigmatic model for wave propagation with exponential nonlinearities in optical media and electromagnetic waves \cite{Khater2025}. In quantum field theory, the TDB model arises as an integrable affine Toda field theory and is connected to the Bethe ansatz description of massive quantum field theories on a cylinder \cite{Dorey2013}. Additional applications include the construction of (pseudo) hyper-complex metrics in four dimensions via solutions of \eqref{TZI} \cite{Dunajski2009}, and the interpretation of the elliptic Tzitz\'eica equation as a Taubes-type equation for vortices on constant mean curvature surfaces embedded in \(\mathbb{R}^{2,1}\) \cite{Dunajski2012}. These diverse realizations underscore the significance of \eqref{TZI} beyond its geometric origin.

The Tzitz\'eica equation \eqref{TZI} and the sine-Gordon equation
\begin{equation}\label{SG}
u_{tt}-u_{xx}+\sin(u)=0
\end{equation}
are two typical examples of integrable systems arising from classical differential geometry \cite{Udriste2011}. The sine-Gordon equation \eqref{SG} appears as the Gauss--Codazzi equation for surfaces with constant Gauss curvature \(-1\) in \(\mathbb{R}^3\), and hence describes pseudo-spherical surfaces. It possesses a \(2\times 2\) Lax pair and has been extensively studied by the Riemann--Hilbert (RH) method \cite{Cheng1999,HuangLenells2018}. Lu and Miller \cite{LuMiller2022} studied the Dubrovin universality near the gradient catastrophe point in the semiclassical sine-Gordon equation. By contrast, the Tzitz\'eica equation \eqref{TZI} admits a \(3\times 3\) Lax pair \cite{Mikhailov1981}. This \(3\times 3\) spectral problem introduces substantial difficulties in the development of the inverse spectral theory, including two reflection coefficients, a spectral plane divided into six sectors, and the possible presence of discrete eigenvalues and solitons. Significant progress has been made for integrable systems with third-order Lax pairs by Charlier, Lenells, and coauthors \cite{CharlierLenells2021,CharlierLenells2022,CharlierLenellsWang2023}. Related RH analyses include the Degasperis--Procesi equation \cite{Boutet2013,Boutet2019}, the ``good'' and ``bad'' Boussinesq equations \cite{Charlier2024,CharlierLenells2024,CharlierLenells2025,DeiftTomeiTrubowitz1982}, and the Lenells equation \cite{CharlierLenells2021}.

The integrable structure of \eqref{TZI} has been extensively investigated since the late 1970s. Dodd and Bullough \cite{Dodd1977} discovered nontrivial conservation laws, and Zhiber and Shabat \cite{Zhiber1979} showed that the equation admits an infinite Lie--B\"acklund symmetry group; for this reason it is sometimes also referred to as the Bullough--Dodd--Zhiber--Shabat equation. Mikhailov \cite{Mikhailov1981} discovered a Lax pair for \eqref{TZI}, which opened the way to the construction of various exact solutions, including soliton solutions, finite-gap solutions, and algebro-geometric solutions \cite{Brezhnev1996,Conte1999,Kaptsov1999,Wu2015}. Various solution-construction techniques have been developed, including B\"acklund transformations \cite{Conte1999,Safin1993}, Darboux transformations \cite{Zhu2006,Nimmo1997}, the dressing method \cite{Babalic2015}, and the Hirota method \cite{Abazari2010}. Babalic, Constantinescu, and Gerdjikov \cite{Babalic2015} studied the spectral properties of the Lax operator associated with \eqref{TZI} and proved that the continuous spectrum of the Lax operator is rotated with respect to the contour of the RH problem by an angle of \(\pi/6\), and that the poles of the dressing factors correspond to discrete eigenvalues.

The soliton resolution conjecture is one of the central open problems in the theory of nonlinear dispersive equations. It asserts that for a large class of integrable equations, the solution of the Cauchy problem with generic initial data decomposes asymptotically into a finite number of solitons plus a radiation term as time tends to infinity \cite{Tao2009,Jendrej2025}. In the integrable setting, where tools such as the inverse scattering transform and the RH method are available, the conjecture is well understood for several models, including the Korteweg--de Vries equation \cite{Eckhaus1983}, the nonlinear Schr\"odinger equation \cite{Borghese2018}, and the derivative nonlinear Schr\"odinger equation \cite{Jenkins2019}. For the sine-Gordon equation, Cheng, Venakides, and Zhou \cite{Cheng1999} investigated the long-time asymptotics of pure-radiation solutions. More recently, Charlier and Lenells \cite{CharlierLenells2024} studied the soliton resolution conjecture for the Boussinesq equation and obtained a complete asymptotic description. Moreover, this method has been widely applied to other integrable systems \cite{boo-36,boo-61,boo-64,boo-65,boo-66,boo-68,boo-47,boo-477,boo-Tian}. However, for the Tzitz\'eica equation \eqref{TZI}, the soliton resolution conjecture has not yet been investigated.

Recently, Huang, Wang, and Zhu \cite{HWW2026} made significant progress on \eqref{TZI}. They studied the long-time asymptotics of pure-radiation solutions to \eqref{TZI} on the line under the assumption that the scattering data contain no discrete spectrum. By employing the RH method and the nonlinear steepest descent method of Deift and Zhou \cite{DeiftZhou1993}, they derived asymptotic formulas in various regions of the \(x\)-\(t\) half-plane and verified their predictions numerically. This work provided the first long-time asymptotic description of pure-radiation solutions of \eqref{TZI}. Some natural problems deserve further consideration for the Tzitz\'{e}ica equation \eqref{TZI}: how to study long-time asymptotic of the Tzitz\'{e}ica equation and how to prove the soliton resolution
    conjecture of the Tzitz\'{e}ica equation?

Based on the above problem we raised, we employ the $\bar{\partial}$ nonlinear steepest descent method \cite{boo-52,boo-53} to study the complete asymptotic analysis of the Tzitz\'{e}ica equation \eqref{TZI}. We conduct a rigorous asymptotic analysis of the Tzitz\'{e}ica equation and provide the long-time asymptotic results of the Tzitz\'{e}ica equation in different regions. Furthermore, we establish the RH problem for the Tzitz\'{e}ica equation \eqref{TZI} when discrete spectrum is present. Our analysis begins with the direct and inverse scattering problems associated with the \(3\times 3\) spectral problem \cite{BealsCoifman1984,BealsDeiftTomei1988}, leading to an inverse scattering transform framework for the initial-value problem. Within this framework, the solution of \eqref{TZI} is formulated in terms of a \(3\times 3\) matrix RH problem. Our results prove the soliton resolution conjecture of the Tzitz\'{e}ica equation. These results significantly extend the understanding beyond those in \cite{HWW2026}.

To establish these results, we reconstruct the solution $u(x,t)$ from the solution of an associated RH problem and analyze the latter by means of the $\bar{\partial}$ nonlinear steepest descent method. This approach has been successfully applied to the long-time asymptotic analysis of the initial value problem for the nonlinear Schr{\"o}dinger equation \cite{boo-80}. It has also provided an effective framework for proving the soliton resolution conjecture for a broad class of nonlinear integrable systems \cite{Borghese2018,boo-Tian,boo-54,boo-55,boo-57}.

\textbf{Our paper is arranged as follows:}

The remainder of the work is organized as follows.
Section~\ref{s:2} recalls the spectral analysis associated with the
$3\times3$ Lax pair and formulates the meromorphic RH problem,
including its reduction symmetries, jump and residue conditions, and
reconstruction formula at $\lambda=0$. Section~\ref{s:3} analyzes the
phase functions and their signature tables, separates the discrete
orbits according to their velocities, and introduces the diagonal
conjugation and pole-removing transformations. Building on this
normalization, Section~\ref{s:4} constructs the lens extensions and
decomposes the resulting mixed problem into a pure RH problem and
a $\bar\partial$ problem.

The next three sections evaluate the contributions of the discrete
spectrum, the local jumps, and the nonanalytic remainder.
Section~\ref{s:discrete-contribution} defines the retained reflectionless
soliton model with modified norming data, expresses its residue
conditions as a finite linear system, and estimates the error caused
by removing the remaining complete orbits.
Section~\ref{s:jump-contribution} constructs the parabolic-cylinder
parametrices at the six stationary points and analyzes the associated
small-norm RH problem to determine the leading radiation matrices.
Section~\ref{s:dbar-contribution} treats the $\bar\partial$ factor
through its Cauchy--Green integral equation, with separate reconstruction
estimates in the exterior and interior regions. Combining these
results, Section~\ref{s:reconstruction-final} reverses the transformations
at $\lambda=0$ and derives the logarithmic asymptotic formulas of
Theorem~\ref{thm:soliton-resolution}.

\section{Inverse scattering transform and RH problem}
\label{s:2}

We recall the spectral construction of \cite{HWW2026} and specify
the reductions and residue data used below. The cited RH
characterization concerns the solitonless case.

\subsection{The Lax pair and spectral analysis}
\label{spectralanalysis}

The Tzitz\'eica equation \eqref{TZI} admits the $3\times3$ Lax pair
\begin{equation}
\phi_x(x,t,\lambda)
=
L(x,t,\lambda)\phi(x,t,\lambda),
\qquad
\phi_t(x,t,\lambda)
=
Z(x,t,\lambda)\phi(x,t,\lambda),
\nonumber
\end{equation}
where
\begin{equation}
L(x,t,\lambda)
=
\frac{\lambda}{2}J+U_0+\frac{1}{\lambda}U_1,\qquad
Z(x,t,\lambda)
=
\frac{\lambda}{2}J+U_0-\frac{1}{\lambda}U_1.
\nonumber
\end{equation}
Here
\begin{equation*}
J
=
\begin{pmatrix}
\omega&0&0\\
0&\omega^2&0\\
0&0&1
\end{pmatrix},
\qquad
U_0
=
\frac{i\sqrt3(u_x+u_t)}{6}
\begin{pmatrix}
0&1&-1\\
-1&0&1\\
1&-1&0
\end{pmatrix},
\qquad
\omega=e^{2\pi i/3},
\end{equation*}
\begin{equation*}
U_1
=
\frac16
\begin{pmatrix}
\omega^2(2e^u+e^{-2u})
&
e^{-2u}-e^u
&
\omega(e^{-2u}-e^u)
\\
e^{-2u}-e^u
&
\omega(2e^u+e^{-2u})
&
\omega^2(e^{-2u}-e^u)
\\
\omega(e^{-2u}-e^u)
&
\omega^2(e^{-2u}-e^u)
&
2e^u+e^{-2u}
\end{pmatrix}.
\end{equation*}
The eigenvalues associated with the Lax pair are
\begin{equation}
l_j(\lambda)
=
\frac{\omega^j\lambda+(\omega^j\lambda)^{-1}}{2},
\qquad
z_j(\lambda)
=
\frac{\omega^j\lambda-(\omega^j\lambda)^{-1}}{2},
\qquad
j=1,2,3.
\nonumber
\end{equation}
The corresponding phase functions are
\begin{equation}
\vartheta_{ij}(x,t,\lambda)
=
\bigl(l_i(\lambda)-l_j(\lambda)\bigr)x
+
\bigl(z_i(\lambda)-z_j(\lambda)\bigr)t.
\nonumber
\end{equation}
The continuous spectrum is
\begin{equation}
\label{Sigmadef}
\Sigma
=
\mathbb R\cup\omega\mathbb R\cup\omega^2\mathbb R,
\end{equation}
which divides the spectral plane into the six open sectors
\begin{equation}
D_j
=
\left\{
\lambda\in\mathbb C:
\frac{(j-1)\pi}{3}
<
\arg\lambda
<
\frac{j\pi}{3}
\right\},
\qquad
j=1,\ldots,6.
\nonumber
\end{equation}
We introduce the permutation matrices
\begin{equation}
\mathcal A
=
\begin{pmatrix}
0&1&0\\
0&0&1\\
1&0&0
\end{pmatrix},
\qquad
\mathcal B
=
\begin{pmatrix}
0&1&0\\
1&0&0\\
0&0&1
\end{pmatrix}.
\nonumber
\end{equation}
For a matrix $X$ with $\det X=1$, its cofactor matrix is denoted by $X^A:=(X^{-1})^T$. The direct scattering construction of \cite{HWW2026} provides the
normalized Jost solutions $\Phi_\pm$, the scattering matrix $s$, its
cofactor matrix
\begin{equation*}
s^A=(s^{-1})^T,
\end{equation*}
and the sectorial eigenfunctions $M_n$, $n=1,\ldots,6$. We define
\begin{equation*}
M(x,t,\lambda)
=
M_n(x,t,\lambda),
\qquad
\lambda\in D_n.
\end{equation*}
In the absence of discrete spectrum $M$ is sectionally analytic,
whereas in the presence of discrete eigenvalues it becomes
sectionally meromorphic.

The Lax matrices possess an additional negation symmetry,
\begin{equation}
\label{Laxcofactor}
L(-\lambda)
=
-L(\lambda)^T,
\qquad
Z(-\lambda)
=
-Z(\lambda)^T.
\end{equation}
Indeed,
\begin{equation*}
J^T=J,
\qquad
U_0^T=-U_0,
\qquad
U_1^T=U_1.
\end{equation*}

\begin{lemma}
\label{lem:cofactor}
The Jost solutions, scattering matrix, and RH eigenfunction satisfy
\begin{subequations}
\begin{align}
\Phi_\pm(x,t,-\lambda)
&=
\Phi_\pm(x,t,\lambda)^A,
\label{Jostcofactor}
\\
s(-\lambda)
&=
s(\lambda)^A
=
s^A(\lambda),
\label{scofactor}
\\
M(x,t,-\lambda)
&=
M(x,t,\lambda)^A
=
M(x,t,\lambda)^{-T}.
\nonumber
\end{align}
\end{subequations}
\end{lemma}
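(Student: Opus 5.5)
The plan is to use the negation symmetry \eqref{Laxcofactor} at the level of the unnormalized solutions, and then transfer it to the normalized Jost functions, to $s$, and to $M$ through uniqueness of each object.

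\emph{Step 1: the cofactor of a solution is again a solution.} Let $\psi(x,t,\lambda)$ solve $\psi_x=L(\lambda)\psi$ and $\psi_t=Z(\lambda)\psi$ with $\det\psi=1$. From $(\psi^{-1})_x=-\psi^{-1}L(\lambda)$ we get
\begin{equation*}
(\psi^A)_x=-L(\lambda)^T\psi^A=L(-\lambda)\psi^A ,
\end{equation*}
and in the same way $(\psi^A)_t=Z(-\lambda)\psi^A$. So $\psi^A(\lambda)$ solves the Lax pair at $-\lambda$.

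\emph{Step 2: the normalizations are compatible.} I will use the normalization of \cite{HWW2026}: $\Phi_\pm=\psi_\pm e^{-(x\mathcal L+t\mathcal Z)}$ with $\mathcal L=\mathrm{diag}(l_1,l_2,l_3)$, $\mathcal Z=\mathrm{diag}(z_1,z_2,z_3)$, and $\Phi_\pm\to I$ as $x\to\pm\infty$. From the definitions, $l_j(-\lambda)=-l_j(\lambda)$ and $z_j(-\lambda)=-z_j(\lambda)$. Hence
\begin{equation*}
\bigl(e^{x\mathcal L(\lambda)+t\mathcal Z(\lambda)}\bigr)^A=e^{x\mathcal L(-\lambda)+t\mathcal Z(-\lambda)} .
\end{equation*}
Taking cofactors commutes with multiplication by diagonal matrices, so $\Phi_\pm(\lambda)^A$ is the $-\lambda$ eigenfunction with the correct exponential factor and boundary value $I$.

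If the Jost functions are produced by a gauge transformation that first diagonalizes the $\lambda J$ and $\lambda^{-1}U_1$ parts, one must also check that this gauge matrix obeys the same cofactor symmetry. This should follow from $J^T=J$ and $U_1^T=U_1$.

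\emph{Step 3: conclude \eqref{Jostcofactor} and \eqref{scofactor}.} The normalized Jost solutions are the unique solutions of Volterra equations, on the domains where those equations are defined. Uniqueness therefore gives \eqref{Jostcofactor}. Since $\lambda\mapsto-\lambda$ maps $\Sigma$ to itself, both sides are defined on the same set. For the scattering matrix, write $\Phi_+=\Phi_-e^{x\widehat{\mathcal L}+t\widehat{\mathcal Z}}s$, where $\widehat{\cdot}$ denotes conjugation by the exponential. Take cofactors, use $(AB)^A=A^AB^A$, and compare with the same relation at $-\lambda$. This gives $s(-\lambda)=s(\lambda)^A$.

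\emph{Step 4 (main obstacle): transfer to $M$.} The map $\lambda\mapsto-\lambda$ sends $D_n$ to $D_{n+3}$. The sectorial eigenfunctions are built from Fredholm integral equations whose integration endpoints, $+\infty$ or $-\infty$ in each entry, are chosen by the sign of $\mathrm{Re}(l_i-l_j)$ in the sector. Because $l_i-l_j$ changes sign under negation, the entrywise choice in $D_{n+3}$ is the opposite of the one in $D_n$. I still need to verify that taking the cofactor acts correctly on this choice: cofactor entries are $2\times2$ minors, which involve the complementary pair of indices.

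The cleanest route is probably to avoid the integral equations and use the RH characterization instead. Take $\widetilde M(\lambda):=M(-\lambda)^{-T}$. Then:
\begin{itemize}
\item $\widetilde M$ is sectionally analytic, or meromorphic, on the same sectors as $M$;
\item $\widetilde M\to I$ as $\lambda\to\infty$;
\item it has the same singular behavior at $\lambda=0$, using the symmetries in $\mathcal A$ and $\mathcal B$;
\item its jump matrix is $v(-\lambda)^{-T}$, and this equals $v(\lambda)$ by \eqref{scofactor}, because $v$ is assembled from the entries of $s$ and $s^A$ and taking cofactors interchanges them.
\end{itemize}
Uniqueness for the RH problem, as recalled from \cite{HWW2026}, then gives $\widetilde M=M$.

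In the presence of solitons I would extend this by continuity in the data, or argue directly from the Fredholm construction at regular points. In either case the identity extends across the poles by meromorphy. Finally, $X^A=X^{-T}$ holds because $\det M=1$.
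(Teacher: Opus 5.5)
Your Steps 1--3 are the paper's proof. \eqref{Laxcofactor} makes $\psi^A(\lambda)$ a solution at $-\lambda$, and the oddness of $l_j,z_j$ (left implicit in the paper) matches the normalizations. Volterra uniqueness then gives \eqref{Jostcofactor}, and cofactoring the scattering relation gives \eqref{scofactor}.

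The difference is the transfer to $M$. The paper only asserts that it ``follows from their construction'', i.e.\ from the integral equations or Jost-column formulas defining each $M_n$. That requires exactly the bookkeeping you flagged: checking that $D_n\mapsto D_{n+3}$ together with passage to $2\times2$ minors matches the entrywise choice of integration endpoints. Your route shows that $\widetilde M(\lambda)=M(-\lambda)^{-T}$ solves the same RH problem and applies Liouville uniqueness. It bypasses that bookkeeping and does not depend on the details of the construction. In exchange it needs $\det M=1$, a uniqueness class at $\lambda=0$, and the jump identity $v(-\lambda)^{-T}=v(\lambda)$. That identity must come, as you say, from the $s,s^A$ form of the jumps in \cite{HWW2026}. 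In this paper \eqref{V0-negation} is imposed through the very reduction being proved, so it cannot be quoted. Note also that the formula $v(-\lambda)^{-T}$ uses the outward orientation of the six rays, since rotation by $\pi$ then preserves the $\pm$ sides. With $\mathbb R$ oriented left to right you would get $v(-\lambda)^{T}$ instead.

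Two points need repair.

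\emph{The origin.} The $\mathcal A$- and $\mathcal B$-reductions only make $G$ a circulant with a reality condition ($2I$ is allowed), so they do not give $G^{-T}=G$. Uniqueness only needs $\widetilde M$ to be bounded at $0$. If you want the exact normalization, $G^TG=I$ follows from \eqref{G}. $G$ is circulant with eigenvalues $1,e^{-u},e^{u}$ on the eigenvectors $(1,\mu,\mu^2)^T$ of $\mathcal A$, $\mu=1,\omega,\omega^2$, and $G^T$ has eigenvalues $1,e^{u},e^{-u}$ on the same vectors.

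\emph{The soliton case.} Continuity in the data cannot reach it: simple zeros of $s_{11}$ persist under small perturbations (Hurwitz), so soliton data are not limits of solitonless data. Your other fallback is the paper's route, which you left unchecked. Instead, keep the RH argument and include the residue conditions. If $M(\lambda)\bigl(I-B_p/(\lambda-p)\bigr)$ is analytic at $p$, then $\widetilde M(\lambda)\bigl(I-B_p^T/(\lambda+p)\bigr)$ is analytic at $-p$. So uniqueness applies once $B_{-p}=B_p^T$ is established for $M$, and that relation between norming data follows from \eqref{Jostcofactor} and \eqref{scofactor}.
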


\begin{proof}
By \eqref{Laxcofactor}, the matrix functions
$\Phi_\pm(x,t,-\lambda)$ and $\Phi_\pm(x,t,\lambda)^A$
satisfy the same normalized Lax equations. The uniqueness of the
corresponding Volterra integral equations gives
\eqref{Jostcofactor}. Applying this relation to the scattering
relation yields \eqref{scofactor}. The corresponding identity for the
sectorial RH eigenfunctions follows from their construction, and the
relations in the remaining sectors are generated by the
$\mathbb Z_3$ reduction.
\end{proof}

Consequently, the RH eigenfunction satisfies the three reductions
\begingroup
\begin{align}\nonumber
M(x,t,\omega\lambda)
=
\mathcal A
M(x,t,\lambda)
\mathcal A^{-1},
M(x,t,\lambda)
=
\mathcal B
\overline{M(x,t,\bar\lambda)}
\mathcal B^{-1},
M(x,t,-\lambda)
=
M(x,t,\lambda)^{-T}.
\end{align}
\endgroup

The continuous scattering data can therefore be represented by a
single reflection coefficient on the real axis. We define
\begin{equation}
r(\lambda)
=
\begin{cases}
\dfrac{s_{12}(\lambda)}{s_{11}(\lambda)},
&
\lambda>0,
\\[3mm]
\dfrac{s_{12}^{A}(\lambda)}{s_{11}^{A}(\lambda)},
&
\lambda<0,
\\[3mm]
0,
&
\lambda=0.
\end{cases}
\nonumber
\end{equation}
The half-axis regularity and endpoint decay used here are established
in the solitonless setting of \cite{HWW2026}. For the meromorphic
data they are included in Theorem~\ref{thm:soliton-resolution}.
The relation \eqref{scofactor} gives
\begin{equation}
\label{reflectioncofactorsymmetry}
r(-\lambda)
=
r(\lambda),
\qquad
\lambda\in\mathbb R.
\end{equation}

\begin{proposition}\label{prop:reflection-positivity}
For real Schwartz initial data, the reflection coefficient defined above
satisfies $|r(s)|<1$ for $s\in\mathbb R$. Consequently, there is a
constant $d_*>0$, depending on the initial data, such that
\begin{equation*}
1-|r(s)|^2\geq d_*,\qquad s\in\mathbb R.
\end{equation*}
\end{proposition}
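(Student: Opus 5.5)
The plan is to derive $|r|<1$ from a unitarity-type identity for the scattering matrix on the real line, coming from the Hermitian structure of the Lax pair for real $u$, and then to get the uniform bound $d_*$ from continuity plus decay at $0$ and $\infty$.

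\textbf{Step 1: a Hermitian symmetry of the Lax pair.} For real $u$ and real $\lambda$ I will look for a conjugation symmetry of the form
\begin{equation*}
L(\lambda)^{*}=-\,\mathcal{C}\,L(\lambda)\,\mathcal{C}^{-1}
\end{equation*}
for a suitable constant matrix $\mathcal C$, and the same for $Z$. The natural candidate is built from $\mathcal B$ combined with the negation symmetry \eqref{Laxcofactor}. For $\lambda\in\mathbb R$ one has $\bar J=J^{2}=\mathcal BJ\mathcal B$. Moreover $U_0$ is purely imaginary antisymmetric, hence Hermitian up to sign, and $U_1$ transforms under $\mathcal B$ and complex conjugation by swapping $\omega$ and $\omega^2$. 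The Schwarz reduction $M=\mathcal B\overline{M(\bar\lambda)}\mathcal B^{-1}$ combined with the cofactor reduction $M(-\lambda)=M(\lambda)^{-T}$ gives, on the real axis,
\begin{equation*}
M(\lambda)^{-1}=\mathcal B\,\overline{M(-\lambda)}^{\,T}\mathcal B^{-1}.
\end{equation*}
This is an indefinite unitarity, so the key is to identify the correct Hermitian form.

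\textbf{Step 2: pass to the scattering matrix.} Applying the same reasoning to $\Phi_\pm$ through Lemma~\ref{lem:cofactor} (uniqueness of the Volterra equations) yields
\begin{equation*}
s(\lambda)^{-1}=\mathcal C\, s(\lambda)^{*}\mathcal C^{-1},\qquad \lambda>0.
\end{equation*}
On $\lambda>0$ the three phases $l_j$ separate, and I expect the relevant block to be the $(1,2)$ one. Equating the $(1,1)$ entries then gives an identity of the type
\begin{equation*}
|s_{11}|^{2}-|s_{12}|^{2}+(\text{term in }s_{13})=1,
\end{equation*}
or, more usefully, an expression such as $s^A_{11}=\overline{s_{11}}-\dots$. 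The aim is to reach
\begin{equation*}
|s_{11}(\lambda)|^{2}-|s_{12}(\lambda)|^{2}=|\text{minor}|^{2}>0.
\end{equation*}
In the $3\times3$ setting the clean statement will probably come from the cofactor entry $s^A_{33}$, which by the $\mathbb Z_3$ symmetry is $s^A_{33}=s_{11}\overline{s_{11}}-s_{12}\overline{s_{12}}$, up to the correct conjugation. Dividing by $|s_{11}|^2$ gives $1-|r|^2=|s^A_{33}|/|s_{11}|^2>0$, provided $s^A_{33}$ does not vanish on $\mathbb R$. This sign determination is the main obstacle. The form here is not definite, so positivity does not follow automatically. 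I would try to show that the relevant minor equals a diagonal entry of $s$ or $s^A$ evaluated at a rotated point $\omega^{j}\lambda$ lying on $\Sigma$. Its nonvanishing on $\Sigma$ is the generic no-spectral-singularity assumption already used implicitly in \cite{HWW2026}, which I will invoke. The case $\lambda<0$ then follows from \eqref{reflectioncofactorsymmetry}, and $\lambda=0$ is trivial since $r(0)=0$.

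\textbf{Step 3: uniform bound.} Since $r$ is continuous on $\mathbb R$, with $r(0)=0$ and $r(\lambda)\to0$ as $|\lambda|\to\infty$ (the endpoint regularity and decay noted before the statement), the continuous function $1-|r|^2$ is positive everywhere and tends to $1$ at $0$ and at $\infty$. It therefore attains a positive minimum $d_*$ on $\mathbb R$.
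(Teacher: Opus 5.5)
There is a genuine gap, located exactly where you flag the ``main obstacle.'' Your reduction itself is sound. By the cofactor formula and the reality reduction, $s^A_{33}(s)=s_{11}(s)s_{22}(s)-s_{12}(s)s_{21}(s)=|s_{11}(s)|^2-|s_{12}(s)|^2$ for $s>0$. The cofactor and cyclic reductions identify this with $s_{11}(\tau s)$, $\tau=e^{i\pi/3}=-\omega^2$, i.e.\ with $a=s_{11}$ on the ray $\arg\lambda=\pi/3$. That ray lies on $\Sigma$ but in the interior of the analyticity sector $\{0<\arg\lambda<2\pi/3\}$ of $s_{11}$.

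However, $1-|r|^2=s^A_{33}/|s_{11}|^2$, not $|s^A_{33}|/|s_{11}|^2$. The numerator is real with a priori unknown sign, so nonvanishing alone only yields $|r(s)|\neq1$. This part is repairable: $s\mapsto a(\tau s)$ is real and continuous on $(0,\infty)$ and tends to $1$ at infinity, so if it has no zeros it is positive. Then $|a(s)|>|b(s)|\geq0$, which also justifies dividing by $s_{11}$, something you take for granted.

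The real problem is that you obtain the nonvanishing by invoking a no-spectral-singularity hypothesis. The proposition is asserted for arbitrary real Schwartz data; the paper's standing assumption on spectral singularities is introduced only after it. Proving this nonvanishing is precisely the content of the proposition.

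The missing idea is that a definite Hermitian identity exists, not on $\mathbb R$ (where, as you note, the form is indefinite), but in the open cone $\pi/6<\arg\lambda<\pi/2$. After the unitary change of basis $\mathsf F$, with $q=e^{u_0}$ and $\lambda=\varrho e^{i\theta}$, one has $\mathscr L^\dagger\mathsf H_\theta+\mathsf H_\theta\mathscr L=-\varrho\cos(3\theta)\operatorname{diag}(0,1,q^{-2}\varrho^{-2})$. Hence for $\cos3\theta<0$ every solution decaying at both ends vanishes. In the cone $\Re l_1<0<\Re l_2,\Re l_3$, so a zero of $a$ there would make the first column of the right Jost solution such a solution. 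Thus $a$ is zero-free in the cone for compactly supported data. Cutoff approximation and Hurwitz's theorem extend this to Schwartz data. The same approximation handles the fact that the full scattering matrix entering $s^A_{33}$ is not defined by Volterra equations for merely Schwartz data. The ray $\arg\lambda=\pi/3$ lies inside this cone, which closes the argument.

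A side remark on Step 2: the pointwise pseudo-unitarity $s(\lambda)^{-1}=\mathcal C s(\lambda)^*\mathcal C^{-1}$ for $\lambda>0$ does not follow from the available reductions. Every composition of complex conjugation with inverse transposition (and rotations) acts on the spectral variable as $\lambda\mapsto-\omega^k\bar\lambda$. Its fixed lines are the bisectors $\arg\lambda\in\pi/6+\frac{\pi}{3}\mathbb Z$, not $\Sigma$; these are exactly the lines where $\cos3\theta=0$ in the identity above. The relation you actually use comes from the cofactor formula and reality alone. Step 3 is fine and matches the paper.
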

\begin{proof}
Put $a(\lambda)=s_{11}(\lambda)$, $b(s)=s_{12}(s)$ for $s>0$, and
\begin{equation*}
S=\{0<\arg\lambda<2\pi/3\},\qquad
\mathcal C=\{\pi/6<\arg\lambda<\pi/2\},\qquad
\tau=e^{i\pi/3}=-\omega^2.
\end{equation*}
The direct Volterra construction gives $a$ holomorphic in $S$,
continuous on $\overline S\setminus\{0\}$, and tending to one at
infinity; see \cite[Propositions 3.1, 3.3, 3.6, and 3.8]{HWW2026}.
These properties of the direct Jost columns do not require the
absence of discrete spectrum. We first prove that $a$ has no zeros
in $\mathcal C$.

At $t=0$, write $q=e^{u_0}>0$ and $p=u_{0,x}+u_1$. The constant
unitary matrix
\begin{equation*}
\mathsf F=\frac1{\sqrt3}
\begin{pmatrix}
1&\omega&\omega^2\\
1&\omega^2&\omega\\
1&1&1
\end{pmatrix}
\end{equation*}
transforms the space equation into $Y_x=\mathscr L(\lambda)Y$, where
\begin{equation*}
\mathscr L(\lambda)=\mathsf F^\dagger L(\lambda)\mathsf F
=\frac12\begin{pmatrix}
0&q/\lambda&\lambda\\
\lambda&-p&q^{-2}/\lambda\\
q/\lambda&\lambda&p
\end{pmatrix}.
\end{equation*}
For $\lambda=\varrho e^{i\theta}$, set
\begin{equation*}
\mathsf H_\theta=
\begin{pmatrix}
1&0&0\\
0&0&-e^{2i\theta}\\
0&-e^{-2i\theta}&0
\end{pmatrix}.
\end{equation*}
Direct multiplication gives
\begin{equation*}
\mathscr L^\dagger\mathsf H_\theta+
\mathsf H_\theta\mathscr L
=-\varrho\cos(3\theta)
\operatorname{diag}(0,1,q^{-2}\varrho^{-2}).
\end{equation*}
Hence any solution $y=(y_1,y_2,y_3)^T$ which decays at both spatial
ends satisfies
\begin{equation*}
0=-\varrho\cos(3\theta)
\int_{\mathbb R}
\bigl(|y_2|^2+q^{-2}\varrho^{-2}|y_3|^2\bigr)\,dx.
\end{equation*}
For $\lambda\in\mathcal C$, $\cos(3\theta)<0$, so $y_2=y_3=0$;
the second row of the space equation then gives $y_1=0$.

Suppose first that the initial data have compact support. The full
Jost matrices are then defined for $\lambda\ne0$. Their columns
$\phi_{\pm,j}=\Phi_\pm e_j e^{x l_j}$ satisfy
\begin{equation*}
\phi_{+,1}
=a(\lambda)\phi_{-,1}
+s_{21}(\lambda)\phi_{-,2}
+s_{31}(\lambda)\phi_{-,3}.
\end{equation*}
In $\mathcal C$,
\begin{equation*}
\Re l_1<0<\Re l_2,\Re l_3,
\qquad
\Re l_j=\frac{\varrho+\varrho^{-1}}2
\cos(\theta+2\pi j/3).
\end{equation*}
If $a(\lambda)=0$, the nonzero column $\phi_{+,1}$ decays exponentially
at both ends. Its transform $\mathsf F^\dagger\phi_{+,1}$ contradicts
the preceding identity. Thus $a$ is zero-free in $\mathcal C$ for
compactly supported real data.

For general real Schwartz data, take real smooth compactly supported
cutoffs $(u_{0,m},u_{1,m})$. On every compact subset of
$\overline S\setminus\{0\}$, their perturbation matrices
\begin{equation*}
W_m=U_{0,m}+\lambda^{-1}(U_{1,m}-J^2/2)
\end{equation*}
converge to $W$ in $L^1_x$, uniformly in $\lambda$, and have bounded
$L^1_x$ norms. The exponentials in the Volterra equations for the
valid columns $\Phi_+e_1$ and $\Phi_-^Ae_1$ have modulus at most one.
The ordered-integral estimates therefore give convergence of these
columns at $x=0$, locally uniformly in $\lambda$. In particular,
\begin{equation*}
a_m(\lambda)
=\bigl(\Phi_{-,m}^A(0,\lambda)e_1\bigr)^T
\Phi_{+,m}(0,\lambda)e_1
\longrightarrow a(\lambda).
\end{equation*}
Hurwitz's theorem implies that $a$ is zero-free or identically zero
in $\mathcal C$. Its normalization at infinity excludes the latter
case. In particular, $a(\tau s)\ne0$ for every $s>0$.

For compactly supported data, the cyclic, reality, and cofactor
reductions give
\begin{equation*}
\begin{aligned}
a(\tau s)
&=s_{11}^A(\omega^2s)=s_{33}^A(s)\\
&=s_{11}(s)s_{22}(s)-s_{12}(s)s_{21}(s)
=|a(s)|^2-|b(s)|^2,\qquad s>0.
\end{aligned}
\end{equation*}
On $s>0$, $\Re l_1=\Re l_2<\Re l_3$, so the columns in
$b(s)=(\Phi_-^A(0,s)e_1)^T\Phi_+(0,s)e_2$ are valid boundary
columns. Their cutoff convergence extends this identity to Schwartz data.
No full Jost matrix outside its column domains is used in this
passage. It follows that $a(\tau s)$ is real and nonzero on
$(0,\infty)$. Since it tends to one at infinity, it is positive.
Consequently, $a(s)\ne0$ and
\begin{equation*}
1-|r(s)|^2
=\frac{a(\tau s)}{|a(s)|^2}>0,\qquad s>0.
\end{equation*}
The relation \eqref{reflectioncofactorsymmetry} gives the assertion
on the negative half-axis, and $r(0)=0$.

Finally, the direct-scattering endpoint limits of
\cite[Proposition 3.6]{HWW2026},
$a(s)\to1$ and $b(s)\to0$ as $s\downarrow0$ and $s\to\infty$
give $r\in C(\mathbb R)$ and $r(s)\to0$ as $|s|\to\infty$.
Choose $R$ so that $|r(s)|\leq1/2$ for $|s|\geq R$. Then
\begin{equation*}
d_*:=\min\left\{\frac34,
\min_{|s|\leq R}\bigl(1-|r(s)|^2\bigr)\right\}>0
\end{equation*}
has the required property.
\end{proof}

We next introduce the discrete spectrum. We assume throughout that
there are no spectral singularities on $\Sigma$ and that all
discrete eigenvalues are simple.

\begin{assumption}
\label{ass:discrete}
There exist finitely many simple zeros
\begin{equation*}
\zeta_n\in D_1,
\qquad
n=1,\ldots,N,
\end{equation*}
of $s_{11}$ such that
\begin{equation}
s_{11}(\zeta_n)=0,
\qquad
s_{11}'(\zeta_n)\neq0.
\nonumber
\end{equation}
We further assume
\begin{equation}
\arg\zeta_n
\neq
\frac{\pi}{6},
\qquad
n=1,\ldots,N,
\nonumber
\end{equation}
and that the physical orbits generated by distinct $\zeta_n$ are
mutually disjoint.
\end{assumption}

\begin{definition}
\label{def:ray-admissible}
Fix $\xi\in(-1,1)$.  Let $\Lambda=\Lambda(\xi)$ be the retained
index set determined by the velocity decomposition,
and let
\begin{equation}\nonumber
\widetilde{\mathcal D}_{\Lambda}(\xi)
=
\left\{
\left(\zeta_n,\widetilde c_n^\Lambda(\xi)\right):
n\in\Lambda
\right\}
\end{equation}
be the modified retained data introduced in \eqref{modifiednormingconstant}.

The data $\widetilde{\mathcal D}_{\Lambda}(\xi)$ are called
\emph{ray-admissible} if the associated reflectionless residue
system remains uniformly nondegenerate along the ray $x=\xi t$,
namely if there exist $t_\xi>0$ and $c_\xi>0$ such that
\begin{equation}\nonumber
\sup_{t\ge t_\xi}
\|A_\Lambda(\xi t,t;\xi)^{-1}\|<\infty,
\end{equation}
\end{definition}
where $A_\Lambda(\xi t,t;\xi)$ is given by \eqref{sec5:finitecoefficientmatrix}.

Besides the $s_{11}$-zeros, the analytic structure of the
$D_1$ eigenfunction also contains a second pole type associated with
$s_{33}^{A}$. In the present Tzitz\'eica reduction these two pole
types are not independent. For each $\zeta_n$, define
\begin{equation}
\zeta_n^\sharp
:=
-\omega^2\bar\zeta_n
=
e^{i\pi/3}\bar\zeta_n.
\nonumber
\end{equation}
The point $\zeta_n^\sharp$ belongs to $D_1$, and
the cyclic, reality, and cofactor-negation reductions imply
\begin{equation}
s_{33}^{A}(\zeta_n^\sharp)
=s_{33}(-\zeta_n^\sharp)
=
s_{33}(\omega^2\bar\zeta_n)
=
s_{22}(\bar\zeta_n)
=\overline{s_{11}(\zeta_n)}=0.
\nonumber
\end{equation}
The simplicity is preserved, since
\begin{equation}
(s_{33}^{A})'(\zeta_n^\sharp)
=
-\omega\,
\overline{s_{11}'(\zeta_n)}
\neq0.
\nonumber
\end{equation}
Thus the $E_{12}$- and $E_{23}$-type poles in $D_1$ form a
symmetry-related pair rather than two independent sets of scattering
data.

For every independent eigenvalue $\zeta_n$, let $c_n$ denote its
norming constant and define
\begin{equation}
\label{betandefnew}
\beta_n(x,t)
=
-c_n e^{\vartheta_{12}(\zeta_n)}.
\end{equation}
Index the cyclic and reality orbit by
\begingroup
\begin{equation}\nonumber
\zeta_{n+N}
=
\omega\bar\zeta_n,
\zeta_{n+2N}
=
\omega\zeta_n,
\zeta_{n+3N}
=
\omega^2\bar\zeta_n,
\zeta_{n+4N}
=
\omega^2\zeta_n,
\zeta_{n+5N}
=
\bar\zeta_n, n=1,\ldots,N.
\end{equation}

The additional cofactor-negation symmetry produces the second half
of the orbit:
\begin{equation}
\zeta_{n+(j+6)N}
=
-\zeta_{n+jN},
\qquad
j=0,\ldots,5,
\qquad
n=1,\ldots,N.
\nonumber
\end{equation}
Hence one independent zero $\zeta_n$ generates the generic
twelve-point physical orbit
\begin{equation}
\mathcal O_n
=
\left\{
\pm\omega^j\zeta_n,
\pm\omega^j\bar\zeta_n:
j=0,1,2
\right\},
\nonumber
\end{equation}
and the complete pole set is
\begin{equation}
\mathcal P
=
\bigcup_{n=1}^{N}\mathcal O_n
=
\{\zeta_j\}_{j=1}^{12N}.
\nonumber
\end{equation}
The sector geometry and one generic orbit are shown in
Figure~\ref{fig:discrete-spectrum-orbit}.
\begin{figure}[htbp]
\centering
\begin{tikzpicture}[line width=0.7pt, font=\small]
  \foreach \angle in {0,60,120} {
    \draw (\angle:3.55) -- (\angle+180:3.55);
  }
  \node[anchor=west] at (0:3.65) {$\mathbb R$};
  \node[anchor=south east] at (120:3.65) {$\omega\mathbb R$};
  \node[anchor=south west] at (60:3.65) {$\omega^2\mathbb R$};
  \node[anchor=north, inner sep=2pt] at (0,0) {$0$};

  \foreach \j/\angle in {1/30,2/90,3/150,4/210,5/270,6/330} {
    \node at (\angle:1.25) {$D_{\j}$};
  }
  \draw[blue!70!black, dashed, line width=0.75pt] (0,0) circle (1.65);
  \node[blue!70!black, anchor=north west, inner sep=2pt]
    at (1.67,-0.03) {$|\lambda|=1$};

  \foreach \angle in {20,40,80,100,140,160,200,220,260,280,320,340} {
    \fill (\angle:2.45) circle (1.6pt);
  }
  \node[anchor=west]       at (20:2.58)  {$\zeta_n$};
  \node[anchor=south west] at (40:2.58)  {$\zeta_n^\sharp$};
  \node[anchor=south west] at (80:2.58)  {$-\omega^2\zeta_n$};
  \node[anchor=south east] at (100:2.58) {$\omega\bar\zeta_n$};
  \node[anchor=south east] at (140:2.58) {$\omega\zeta_n$};
  \node[anchor=east]       at (160:2.58) {$-\bar\zeta_n$};
  \node[anchor=east]       at (200:2.58) {$-\zeta_n$};
  \node[anchor=north east] at (220:2.58) {$\omega^2\bar\zeta_n$};
  \node[anchor=north east] at (260:2.58) {$\omega^2\zeta_n$};
  \node[anchor=north west] at (280:2.58) {$-\omega\bar\zeta_n$};
  \node[anchor=north west] at (320:2.58) {$-\omega\zeta_n$};
  \node[anchor=west]       at (340:2.58) {$\bar\zeta_n$};
\end{tikzpicture}
\caption{The spectral contour $\Sigma$, the sectors $D_j$, and a generic
twelve-point orbit $\mathcal O_n$ in the $\lambda$-plane, with
$\zeta_n^\sharp=-\omega^2\bar\zeta_n$. The dashed circle denotes
$|\lambda|=1$; the radial placement of the orbit is schematic.}
\label{fig:discrete-spectrum-orbit}
\end{figure}

In particular,
\begin{equation}
\zeta_n^\sharp
=
-\omega^2\bar\zeta_n
=
\zeta_{n+9N}.
\nonumber
\end{equation}
For later use, we denote the two $D_1$ pole types by the index sets
\begin{equation}
\widetilde{\mathcal N}
=
\{1,\ldots,N\},
\qquad
\widetilde{\mathcal N}^{A}
=
\{n+9N:\ n=1,\ldots,N\}.
\nonumber
\end{equation}
The second set is completely determined by the first and does not
represent an additional independent family of scattering data.

The residue matrices inherit the three reductions. Namely,
\begin{equation}\label{residueZ3new}
B_{\omega p}
=
\omega\mathcal A
B_p
\mathcal A^{-1},
B_{\bar p}
=
\mathcal B
\overline{B_p}
\mathcal B^{-1},
B_{-p}
=
B_p^T.
\end{equation}

The factor $\omega$ in \eqref{residueZ3new} is the Jacobian factor
associated with the local change of variable
$\lambda\mapsto\omega\lambda$.

The first six residue matrices are therefore
\begin{subequations}
\label{firstsixresiduematricesnew}
\begin{align}
B_n
&=
\beta_nE_{12},
B_{n+N}
=
\omega\bar\beta_nE_{13},
B_{n+2N}
=
\omega\beta_nE_{31},
\\
B_{n+3N}
&=
\omega^2\bar\beta_nE_{32},
B_{n+4N}
=
\omega^2\beta_nE_{23},
B_{n+5N}
=
\bar\beta_nE_{21}, \qquad
n=1,\ldots,N.
\end{align}
\end{subequations}
The additional six residue matrices generated by the
cofactor-negation reduction are
\begin{subequations}
\label{secondsixresiduematricesnew}
\begin{align}
B_{n+6N}
&=
\beta_nE_{21},
B_{n+7N}
=
\omega\bar\beta_nE_{31},
B_{n+8N}
=
\omega\beta_nE_{13},
\\
B_{n+9N}
&=
\omega^2\bar\beta_nE_{23},
B_{n+10N}
=
\omega^2\beta_nE_{32},
B_{n+11N}
=
\bar\beta_nE_{12}, \qquad
n=1,\ldots,N.
\end{align}

\end{subequations}

The $D_1$ Type-II pole is the member
$\zeta_n^\sharp=\zeta_{n+9N}$ of the same physical orbit. Its norming constant is
\begin{equation}
d_n
:=
\omega^2\bar c_n,
\nonumber
\end{equation}
The two possible $D_1$ residue structures are the
$E_{12}$- and $E_{23}$-types, exactly as required by the two
denominators appearing in the $D_1$ sectorial eigenfunction, but
their norming data are related by the Tzitz\'eica reductions.

\subsection{Set up of RH problem}
\label{setuprhp}

We now formulate the meromorphic RH problem associated with the
scattering data introduced above. The contour is
\begin{equation*}
\Sigma
=
\mathbb R\cup\omega\mathbb R\cup\omega^2\mathbb R,
\end{equation*}
with the orientation inherited from the six-ray formulation of
\cite{HWW2026}.

The behavior of the RH solution at the origin is determined by the
matrix
\begin{equation}
\label{G}
G(x,t)
=
\frac{1+e^u+e^{2u}}{3e^u}
\begin{pmatrix}
1&
\dfrac{\omega(e^u-1)}{e^u-\omega^2}
&
\dfrac{\omega^2(e^u-1)}{e^u-\omega}
\\[3mm]
\dfrac{\omega^2(e^u-1)}{e^u-\omega}
&
1&
\dfrac{\omega(e^u-1)}{e^u-\omega^2}
\\[3mm]
\dfrac{\omega(e^u-1)}{e^u-\omega^2}
&
\dfrac{\omega^2(e^u-1)}{e^u-\omega}
&
1
\end{pmatrix}.
\end{equation}

\begin{RHP}
\label{RHP:basic-meromorphic}
Find a matrix valued function $M(x,t,\lambda)$ admits:
\begin{enumerate}[($i$)]

\item
Analyticity:~The matrix $M$ is meromorphic in
$\mathbb C\setminus\Sigma$ with only simple poles at the points of
$\mathcal P$.
 \item Symmetry:
 \begin{align}\nonumber
M(x,t,\omega\lambda)
=
\mathcal A
M(x,t,\lambda)
\mathcal A^{-1},
M(x,t,\lambda)
=
\mathcal B
\overline{M(x,t,\bar\lambda)}
\mathcal B^{-1},
M(x,t,-\lambda)
=
M(x,t,\lambda)^{-T}.
\end{align}

\item
The boundary values satisfy
\begin{equation}
M_+(x,t,\lambda)
=
M_-(x,t,\lambda)V(x,t,\lambda),
\qquad
\lambda\in\Sigma,
\end{equation}
where
\begin{equation}
V(\lambda)
=
\begin{cases}
V^0(\lambda),
& \lambda\in\mathbb R,
\\[1mm]
\mathcal A
V^0(\omega^2\lambda)
\mathcal A^{-1},
& \lambda\in\omega\mathbb R,
\\[1mm]
\mathcal A^2
V^0(\omega\lambda)
\mathcal A^{-2},
& \lambda\in\omega^2\mathbb R.
\end{cases}
\label{jump-matrix}
\end{equation}
Here, for $\lambda>0$,
\begin{equation}
V^0(\lambda)
=
\begin{pmatrix}
1 &
-r(\lambda)e^{-\vartheta_{21}(\lambda)} &
0
\\
\overline{r(\lambda)}
e^{\vartheta_{21}(\lambda)} &
1-|r(\lambda)|^2 &
0
\\
0&0&1
\end{pmatrix},
\label{V0}
\end{equation}
while its value on the negative real axis is determined by the
cofactor-negation reduction,
\begin{equation}
V^0(-\lambda)
=
V^0(\lambda)^{-T},
\qquad \lambda>0.
\label{V0-negation}
\end{equation}

\item
Asymptotic behavior:
\begin{equation}
M(x,t,\lambda)
=
I+O(\lambda^{-1}),
\qquad
\lambda\to\infty.
\nonumber
\end{equation}
\begin{equation}
\label{basicoriginbehavior}
M(x,t,\lambda)
=
G(x,t)+O(\lambda),
\qquad
\lambda\to0.
\end{equation}

\item
Residue conditions: for each independent point $\zeta_n\in D_1$,
$n=1,\ldots,N$, the twelve members of $\mathcal O_n$ satisfy
\begin{subequations}\label{basicresiduecondition}
\begin{equation}
\begin{aligned}
\operatorname*{Res}_{\lambda=\zeta_n}M(\lambda)
&=\lim_{\lambda\to\zeta_n}M(\lambda)B_n,\\[2mm]
\operatorname*{Res}_{\lambda=\omega\bar\zeta_n}M(\lambda)
&=\omega\lim_{\lambda\to\omega\bar\zeta_n}
M(\lambda)(\mathcal A\mathcal B)\overline{B_n}(\mathcal A\mathcal B)
=\lim_{\lambda\to\omega\bar\zeta_n}M(\lambda)B_{n+N},\\[2mm]
\operatorname*{Res}_{\lambda=\omega\zeta_n}M(\lambda)
&=\omega\lim_{\lambda\to\omega\zeta_n}
M(\lambda)\mathcal A B_n\mathcal A^{-1}
=\lim_{\lambda\to\omega\zeta_n}M(\lambda)B_{n+2N},\\[2mm]
\operatorname*{Res}_{\lambda=\omega^2\bar\zeta_n}M(\lambda)
&=\omega^2\lim_{\lambda\to\omega^2\bar\zeta_n}
M(\lambda)(\mathcal A^2\mathcal B)\overline{B_n}(\mathcal A^2\mathcal B)
=\lim_{\lambda\to\omega^2\bar\zeta_n}M(\lambda)B_{n+3N},\\[2mm]
\operatorname*{Res}_{\lambda=\omega^2\zeta_n}M(\lambda)
&=\omega^2\lim_{\lambda\to\omega^2\zeta_n}
M(\lambda)\mathcal A^2 B_n\mathcal A^{-2}
=\lim_{\lambda\to\omega^2\zeta_n}M(\lambda)B_{n+4N},\\[2mm]
\operatorname*{Res}_{\lambda=\bar\zeta_n}M(\lambda)
&=\lim_{\lambda\to\bar\zeta_n}
M(\lambda)\mathcal B\overline{B_n}\mathcal B
=\lim_{\lambda\to\bar\zeta_n}M(\lambda)B_{n+5N},
\end{aligned}
\end{equation}
\begin{equation}\label{basicresiduenegative}
\begin{aligned}
\operatorname*{Res}_{\lambda=-\zeta_n}M(\lambda)
&=\lim_{\lambda\to-\zeta_n}M(\lambda)B_n^T
=\lim_{\lambda\to-\zeta_n}M(\lambda)B_{n+6N},\\[2mm]
\operatorname*{Res}_{\lambda=-\omega\bar\zeta_n}M(\lambda)
&=\lim_{\lambda\to-\omega\bar\zeta_n}M(\lambda)B_{n+N}^T
=\lim_{\lambda\to-\omega\bar\zeta_n}M(\lambda)B_{n+7N},\\[2mm]
\operatorname*{Res}_{\lambda=-\omega\zeta_n}M(\lambda)
&=\lim_{\lambda\to-\omega\zeta_n}M(\lambda)B_{n+2N}^T
=\lim_{\lambda\to-\omega\zeta_n}M(\lambda)B_{n+8N},\\[2mm]
\operatorname*{Res}_{\lambda=-\omega^2\bar\zeta_n}M(\lambda)
&=\lim_{\lambda\to-\omega^2\bar\zeta_n}M(\lambda)B_{n+3N}^T
=\lim_{\lambda\to-\omega^2\bar\zeta_n}M(\lambda)B_{n+9N},\\[2mm]
\operatorname*{Res}_{\lambda=-\omega^2\zeta_n}M(\lambda)
&=\lim_{\lambda\to-\omega^2\zeta_n}M(\lambda)B_{n+4N}^T
=\lim_{\lambda\to-\omega^2\zeta_n}M(\lambda)B_{n+10N},\\[2mm]
\operatorname*{Res}_{\lambda=-\bar\zeta_n}M(\lambda)
&=\lim_{\lambda\to-\bar\zeta_n}M(\lambda)B_{n+5N}^T
=\lim_{\lambda\to-\bar\zeta_n}M(\lambda)B_{n+11N}.
\end{aligned}
\end{equation}
\end{subequations}
Here $\mathcal A\mathcal B$ and $\mathcal A^2\mathcal B$ are involutions.
The two $D_1$ residue matrices are
\begin{equation}
B_j=\begin{cases}
\begin{pmatrix}
0&-c_n e^{\vartheta_{12}(\zeta_n)}&0\\
0&0&0\\0&0&0
\end{pmatrix},&j=n\in\widetilde{\mathcal N},\\[4mm]
\begin{pmatrix}
0&0&0\\0&0&-d_n e^{\vartheta_{23}(\zeta_n^\sharp)}\\
0&0&0
\end{pmatrix},&j=n+9N\in\widetilde{\mathcal N}^{A},
\end{cases}
\nonumber
\end{equation}

where $d_n=\omega^2\bar c_n$ and
$B_{\zeta_n^\sharp}=\omega^2\bar\beta_nE_{23}$.
Thus the second $D_1$ type is already included in
\eqref{basicresiduenegative}.

\end{enumerate}

\end{RHP}

\subsection{RH characterization of the solution for the Tzitz\'eica equation}
\label{uissolution}

We finally recall the reconstruction of the solution from the
matrix RH problem. Set
\begin{equation}
\boldsymbol{\ell}
:=
(\omega,\omega^2,1).
\nonumber
\end{equation}
The matrix $G(x,t)$ in \eqref{G} satisfies
\begin{equation}
\boldsymbol{\ell}G(x,t)
=
e^{u(x,t)}
\boldsymbol{\ell},
\qquad
\boldsymbol{\ell}G(x,t)^{-1}
=
e^{-u(x,t)}
\boldsymbol{\ell}.
\nonumber
\end{equation}
Therefore the behavior \eqref{basicoriginbehavior} determines the
Tzitz\'eica field through
\begin{equation}
\label{TzitzeicaReconstructionFormula}
u(x,t)
=
\lim_{\lambda\to0}
\log
\left[
(\omega,\omega^2,1)
M(x,t,\lambda)
\right]_{13}.
\end{equation}
Here $[\cdot]_{13}$ denotes the $(1,3)$ entry of the resulting
$1\times3$ row matrix.

\section{Normalization of the RH problem}
\label{s:3}

\subsection{Phase points and signature table}
\label{s:3.1}

To deal with the oscillatory components
$e^{\pm it\theta_{ij}(\lambda)}$ in RH problem
\ref{RHP:basic-meromorphic}, we consider the imaginary part of
$\theta_{12}(\lambda)$. Recall that
\begin{equation*}
\vartheta_{ij}(\lambda)
=
it\theta_{ij}(\lambda),
\end{equation*}
and, with $\xi=\frac{x}{t}$, we have
\begin{equation*}
\theta_{12}(\lambda)
=
\frac{\sqrt3}{2}
\left[
(1+\xi)\lambda
+
(1-\xi)\lambda^{-1}
\right].
\end{equation*}

Hence
\begin{equation}
\label{Imtheta12}
\begin{aligned}
\Im\theta_{12}(\lambda)
&=
\frac{\sqrt3}{2}
\Im\lambda
\left[
(1+\xi)
-
(1-\xi)|\lambda|^{-2}
\right]
\\
&=
\frac{\sqrt3}{2}
\left(1+|\lambda|^{-2}\right)
\Im\lambda
\left[
\xi-
\frac{1-|\lambda|^2}{1+|\lambda|^2}
\right].
\end{aligned}
\end{equation}

The signature table of $\Im\theta_{12}$ follows directly from
\eqref{Imtheta12}. In particular, for $\xi>1$,
\begin{equation*}
\operatorname{sgn}\Im\theta_{12}(\lambda)
=
\operatorname{sgn}\Im\lambda,
\end{equation*}
whereas for $\xi<-1$ the signs are reversed.

For $|\xi|<1$, introduce
\begin{equation*}
\lambda_0
=
\sqrt{\frac{1-\xi}{1+\xi}}.
\end{equation*}
Then
\begin{equation*}
\Im\theta_{12}(\lambda)
=
\frac{\sqrt3(1+\xi)}
{2|\lambda|^2}
\Im\lambda
\left(
|\lambda|^2-\lambda_0^2
\right).
\end{equation*}
Thus the zero level set of $\Im\theta_{12}$ consists of the real
axis and the circle
\begin{equation*}
|\lambda|=\lambda_0.
\end{equation*}

The stationary phase points of $\theta_{12}$ are determined by
\begin{equation*}
\frac{\partial\theta_{12}}
{\partial\lambda}
=
\frac{\sqrt3}{2}
\left[
(1+\xi)
-
(1-\xi)\lambda^{-2}
\right]
=
0.
\end{equation*}

\begin{figure}[H]
\centering
\subfloat[$\xi\leq-1$]{\includegraphics[width=0.3\linewidth]{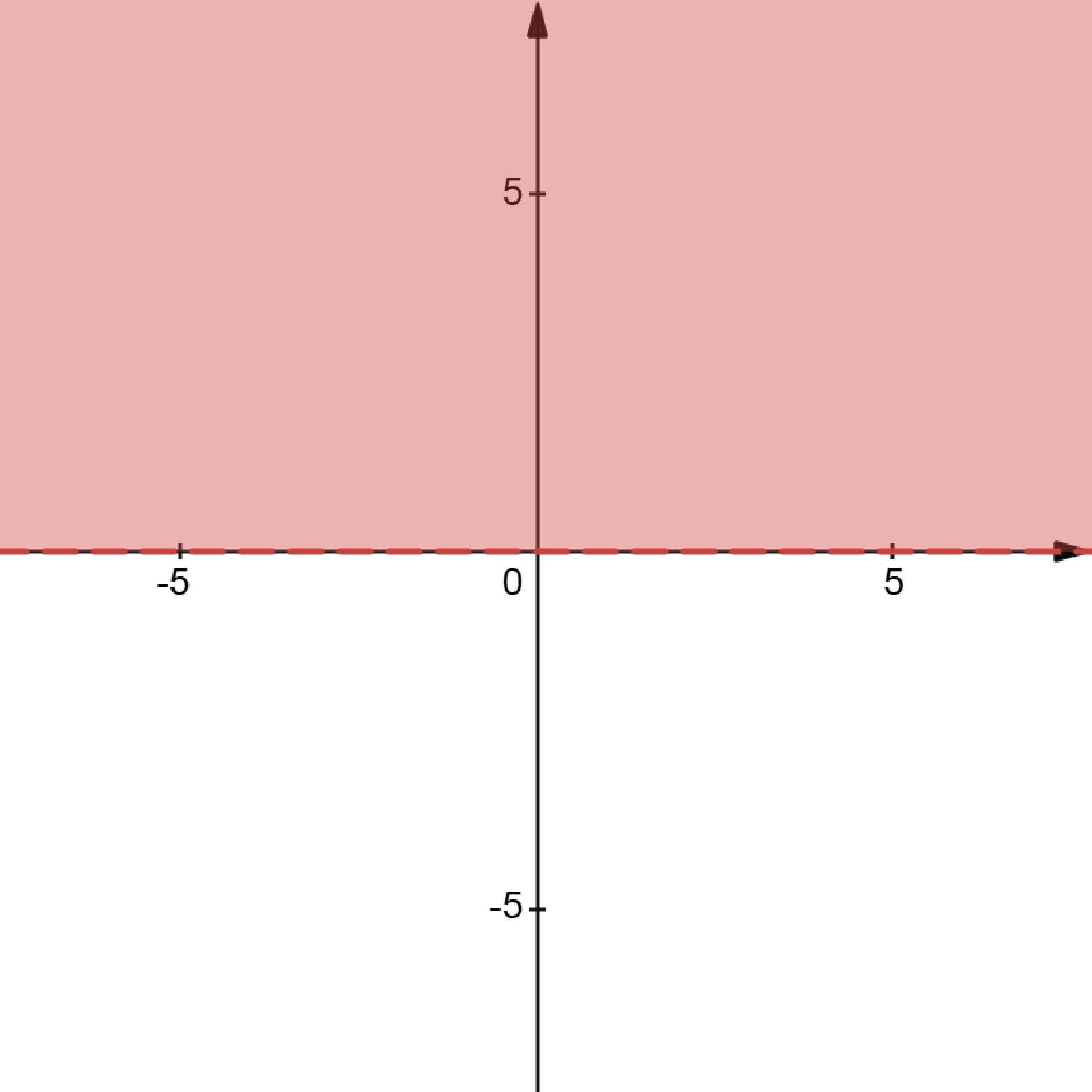}
\label{fig:desmos-graph}}
\subfloat[$-1<\xi<1$]{\includegraphics[width=0.3\linewidth]{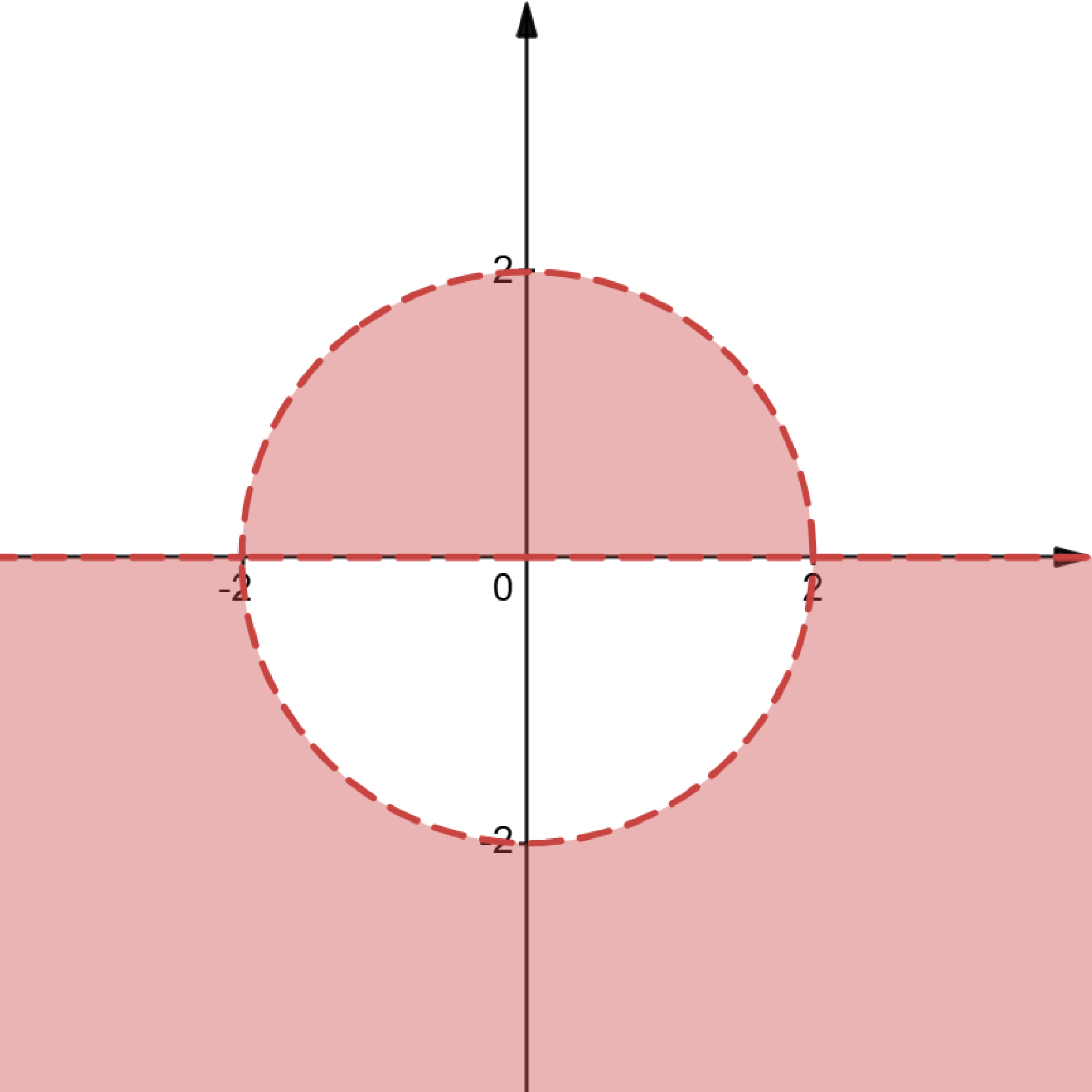}
\label{fig:desmos-graph-1}}
\subfloat[$\xi\geq1$]{\includegraphics[width=0.3\linewidth]{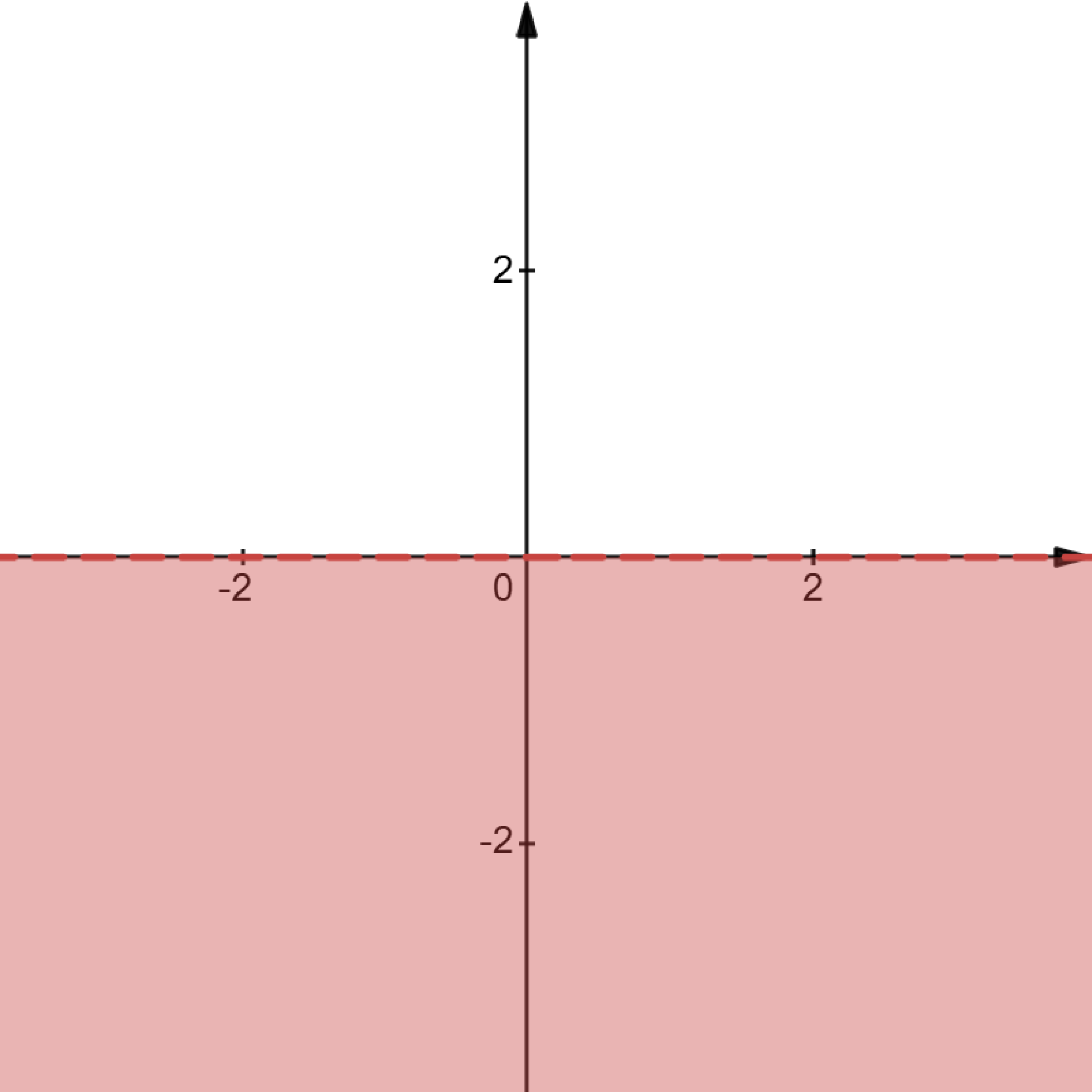}
\label{fig:desmos-graph-5}}
\caption{Signature of $\Re(2it\theta_{12}(\lambda))$.
The shaded region has positive sign, where $e^{-2it\theta_{12}(\lambda)}$
decays; the unshaded region has negative sign, where
$e^{2it\theta_{12}(\lambda)}$ decays. Their boundary is
$\Im\theta_{12}(\lambda)=0$, where both exponential factors have modulus one.}
\label{fig3}
\end{figure}

Therefore, there are no real stationary phase points for $|\xi|>1$,
whereas for $|\xi|<1$ there are exactly two,
\begin{equation*}
-\lambda_0,
\qquad
\lambda_0.
\end{equation*}
Consequently, the number of stationary phase points on the real axis
is
\begin{equation}
p(\xi)
=
\begin{cases}
0,
&
\xi\in(-\infty,-1]\cup[1,+\infty),
\\[1mm]
2,
&
\xi\in(-1,1).
\end{cases}
\nonumber
\end{equation}

The remaining stationary phase points follow from the cyclic phase
relations
\begin{equation*}
\theta_{31}(\omega\lambda)
=
\theta_{12}(\lambda),
\qquad
\theta_{23}(\omega^2\lambda)
=
\theta_{12}(\lambda).
\end{equation*}
Thus, for $|\xi|<1$, the complete six-ray RH problem has the six
stationary phase points
\begin{equation*}
\pm\lambda_0,
\qquad
\pm\omega\lambda_0,
\qquad
\pm\omega^2\lambda_0.
\end{equation*}
\subsection{Conjugation}
\label{s:3.2}

The signature table determines the triangular factorization of each
jump. We then conjugate its diagonal factor and replace the remote
pole conditions by exponentially small jumps on circles.

We first consider the real-axis jump matrices. Set
\begin{equation*}
d(\lambda)
:=
1-|r(\lambda)|^2.
\end{equation*}
We work with generic scattering data for which
\begin{equation*}
d(\lambda)>0,
\qquad
\lambda\in\mathbb R.
\end{equation*}

Since
\begin{equation*}
\vartheta_{21}
=
-\vartheta_{12}
=
-it\theta_{12},
\end{equation*}
the positive real-axis jump \eqref{jump-matrix} admits the two
factorizations
\begin{equation}
\label{factorization-positive}
V
=
b_{1}^{\rm l}b_{1}^{\rm u}
=
\widehat b_{1}^{\rm u}
T_{0,+}
\widehat b_{1}^{\rm l},
\qquad
\lambda>0,
\end{equation}
where
\begin{equation*}
b_{1}^{\rm l}
=
\begin{pmatrix}
1&0&0\\
\overline{r(\lambda)}
e^{-it\theta_{12}(\lambda)}
&1&0\\
0&0&1
\end{pmatrix},
\qquad
b_{1}^{\rm u}
=
\begin{pmatrix}
1&
-r(\lambda)e^{it\theta_{12}(\lambda)}
&0\\
0&1&0\\
0&0&1
\end{pmatrix},
\end{equation*}
and
\begin{equation*}
\widehat b_{1}^{\rm u}
=
\begin{pmatrix}
1&
-\dfrac{r(\lambda)}{d(\lambda)}
e^{it\theta_{12}(\lambda)}
&0\\
0&1&0\\
0&0&1
\end{pmatrix},
T_{0,+}
=
\begin{pmatrix}
d(\lambda)^{-1}&0&0\\
0&d(\lambda)&0\\
0&0&1
\end{pmatrix},
\widehat b_{1}^{\rm l}
=
\begin{pmatrix}
1&0&0\\
\dfrac{\overline{r(\lambda)}}{d(\lambda)}
e^{-it\theta_{12}(\lambda)}
&1&0\\
0&0&1
\end{pmatrix}.
\end{equation*}

Because the negative real ray is oriented from the origin toward
$-\infty$ in the six-ray formulation, its jump has the complementary
factorization
\begin{equation}
\label{factorization-negative}
V
=
b_{4}^{\rm u}b_{4}^{\rm l}
=
\widehat b_{4}^{\rm l}
T_{0,-}
\widehat b_{4}^{\rm u},
\qquad
\lambda<0,
\end{equation}
where
\begin{equation*}
b_{4}^{\rm u}
=
\begin{pmatrix}
1&
-\overline{r(\lambda)}
e^{it\theta_{12}(\lambda)}
&0\\
0&1&0\\
0&0&1
\end{pmatrix},
\qquad
b_{4}^{\rm l}
=
\begin{pmatrix}
1&0&0\\
r(\lambda)e^{-it\theta_{12}(\lambda)}
&1&0\\
0&0&1
\end{pmatrix},
\end{equation*}
and
\begin{equation*}
\widehat b_{4}^{\rm l}
=
\begin{pmatrix}
1&0&0\\
\dfrac{r(\lambda)}{d(\lambda)}
e^{-it\theta_{12}(\lambda)}
&1&0\\
0&0&1
\end{pmatrix},
T_{0,-}
=
\begin{pmatrix}
d(\lambda)&0&0\\
0&d(\lambda)^{-1}&0\\
0&0&1
\end{pmatrix},
\widehat b_{4}^{\rm u}
=
\begin{pmatrix}
1&
-\dfrac{\overline{r(\lambda)}}{d(\lambda)}
e^{it\theta_{12}(\lambda)}
&0\\
0&1&0\\
0&0&1
\end{pmatrix}.
\end{equation*}
The factorizations on the remaining four rays are obtained from
\eqref{factorization-positive} and
\eqref{factorization-negative} by the $\mathbb Z_3$ reduction.

We next classify the discrete spectrum according to its exponential
growth or decay. Write
\begin{equation*}
\zeta_n
=
a_ne^{i\alpha_n},
\qquad
a_n>0,
\qquad
0<\alpha_n<\frac{\pi}{3}.
\end{equation*}
By \eqref{Imtheta12},
\begin{equation}
\label{discretephasevelocity}
\Im\theta_{12}(\zeta_n)
=
\gamma_n(\xi-v_n),
\end{equation}
where
\begin{equation*}
\gamma_n
=
\frac{\sqrt3(1+a_n^2)\sin\alpha_n}{2a_n}
>0,
\qquad
v_n
=
\frac{1-a_n^2}{1+a_n^2}.
\end{equation*}
Thus $v_n$ is the characteristic velocity associated with the
physical orbit generated by $\zeta_n$.

For each fixed $\xi$ with $|\xi|\ne1$, choose a sufficiently small
$\delta_0>0$, independent of $t$, such that
\begin{equation*}
\delta_0<\min_{\{n:\,\xi\ne v_n\}}
\gamma_n|\xi-v_n|.
\end{equation*}
This restriction is void if the index set is empty. Define
\begin{equation}
\label{discretesplitting}
\begin{aligned}
\Delta
&=
\left\{
n\in\{1,\ldots,N\}:
\Im\theta_{12}(\zeta_n)<-\delta_0
\right\},
\\
\nabla
&=
\left\{
n\in\{1,\ldots,N\}:
\Im\theta_{12}(\zeta_n)>\delta_0
\right\},
\\
\Lambda
&=
\left\{
n\in\{1,\ldots,N\}:
|\Im\theta_{12}(\zeta_n)|
\leq\delta_0
\right\}.
\end{aligned}
\end{equation}
The points indexed by $\Lambda$ constitute the retained soliton set.
Since all twelve members of $\mathcal O_n$ are generated from
$\zeta_n$ by the reductions in Section~\ref{s:2}, their exponential
weights have the same modulus. Hence the classification
\eqref{discretesplitting} applies to the complete physical orbit.

The sets $\Delta$, $\nabla$ and $\Lambda$ index the seed poles.
Their full twelve-point orbit index sets are
\begin{equation}
\begin{aligned}
\widetilde\Delta
&:=\{n+kN:\ n\in\Delta,\quad k=0,\ldots,11\},\\
\widetilde\nabla
&:=\{n+kN:\ n\in\nabla,\quad k=0,\ldots,11\},\\
\widetilde\Lambda
&:=\{n+kN:\ n\in\Lambda,\quad k=0,\ldots,11\}.
\end{aligned}
\nonumber
\end{equation}
Write $\Lambda^c=\{1,\ldots,N\}\setminus\Lambda$ and
$\widetilde{\Lambda^c}=\widetilde\Delta\cup\widetilde\nabla$.
The corresponding retained and removed pole sets are
\begin{equation}
\mathcal P_\Lambda
=
\{\zeta_j:\ j\in\widetilde\Lambda\},
\mathcal P_{\Lambda^c}
=
\{\zeta_j:\ j\in\widetilde{\Lambda^c}\}
=
\mathcal P\setminus\mathcal P_\Lambda.
\nonumber
\end{equation}

We now introduce the diagonal conjugating matrix
\begin{equation*}
T(\lambda)
=
\operatorname{diag}
\left(
T_1(\lambda),
T_2(\lambda),
T_3(\lambda)
\right).
\end{equation*}
Let
\begin{equation}
\label{THdef}
T_i(\lambda)
=
\frac{H(\omega^{i+1}\lambda)}
     {H(\omega^{i+2}\lambda)},
H(\lambda)
=
\prod_{n\in\Delta}
\frac{
(\lambda-\zeta_n)
(\lambda+\bar\zeta_n)
}{
(\lambda-\bar\zeta_n)
(\lambda+\zeta_n)
}
\exp
\left\{
-\frac{1}{2\pi i}
\int_{I(\xi)}
\frac{\log(1-|r(s)|^2)}
{s-\lambda}
\,ds
\right\}.
\end{equation}
Here the powers of $\omega$ are understood modulo three and
\begin{equation}
\label{Ixi}
I(\xi)
=
\begin{cases}
\mathbb R,
&
\xi\in(-\infty,-1),
\\[1mm]
(-\lambda_0,\lambda_0),
&
\xi\in(-1,1),
\\[1mm]
\varnothing,
&
\xi\in(1,+\infty).
\end{cases}
\end{equation}
The intervals in \eqref{Ixi} are oriented from left to right in the
Cauchy integral.

We also set
\begin{equation*}
T_{ij}(\lambda)
:=
\frac{T_i(\lambda)}{T_j(\lambda)},
\qquad
i,j=1,2,3.
\end{equation*}

The choice of the sign in the Cauchy exponential in
\eqref{THdef} is adapted to the orientation and the positions of the
diagonal factors in
\eqref{factorization-positive}--\eqref{factorization-negative}.
The Plemelj formula gives
\begin{equation}
H_+(\lambda)
=
\begin{cases}
H_-(\lambda)d(\lambda)^{-1},
&
\lambda\in I(\xi)\cap\mathbb R_+,
\\[1mm]
H_-(\lambda)d(\lambda),
&
\lambda\in I(\xi)\cap\mathbb R_-.
\end{cases}
\nonumber
\end{equation}
Consequently,
\begin{equation}
\label{Tjump}
T_+(\lambda)
=
\begin{cases}
T_-(\lambda)T_{0,+}(\lambda)^{-1},
&
\lambda\in I(\xi)\cap\mathbb R_+,
\\[1mm]
T_-(\lambda)T_{0,-}(\lambda)^{-1},
&
\lambda\in I(\xi)\cap\mathbb R_-.
\end{cases}
\end{equation}

Because $I(\xi)$ is symmetric and
$r(-s)=r(s)$, the scalar function $H$ satisfies
\begin{equation}
H(-\lambda)
=
H(\lambda)^{-1},
\qquad
\overline{H(\bar\lambda)}
=
H(\lambda)^{-1}.
\nonumber
\end{equation}
It follows that
\begin{equation}\label{Treductions}
T(\omega\lambda)
=
\mathcal A
T(\lambda)
\mathcal A^{-1},
T(\lambda)
=
\mathcal B
\overline{T(\bar\lambda)}
\mathcal B^{-1},
T(-\lambda)
=
T(\lambda)^{-1}
=
T(\lambda)^A.
\end{equation}
We also have
\begin{equation}
\det T(\lambda)=1,
\qquad
T(\lambda)=I+O(\lambda^{-1}),
\quad
\lambda\to\infty,
\qquad
T(0)=I.
\nonumber
\end{equation}

The discrete factor in $H$ has a zero at $\zeta_n$ and
$-\bar\zeta_n$, and a pole at $\bar\zeta_n$ and $-\zeta_n$ for every
$n\in\Delta$. Consequently, the components of $T$ have precisely the
zeros and poles required to reverse the exponentially growing residue
conditions. The corresponding divisor structure at the remaining
eight points of $\mathcal O_n$ follows from
\eqref{Treductions}.

To implement the second normalization operation, define the critical
set
\begin{equation}
\Gamma_{\rm crit}(\xi)
=
\begin{cases}
\Sigma,
&
|\xi|>1,
\\[1mm]
\Sigma\cup\{\lambda\in\mathbb C:|\lambda|=\lambda_0\},
&
|\xi|<1.
\end{cases}
\nonumber
\end{equation}
If $\mathcal P=\varnothing$, there are no pole disks or pole-circle jumps.
Otherwise, omit minima over empty subsets in the following formula and choose
\begin{equation}
\varrho
=
\frac14
\min
\left\{
\min_{p\in\mathcal P}
\operatorname{dist}(p,\Sigma),
\;
\min_{p\in\mathcal P_{\Lambda^c}}
\operatorname{dist}
\bigl(p,\Gamma_{\rm crit}(\xi)\bigr),
\;
\min_{p\in\mathcal P}|p|,
\;
\min_{\substack{p,q\in\mathcal P\\p\neq q}}
|p-q|
\right\}.
\nonumber
\end{equation}
 Define $\mathbb D_j:=D(\zeta_j,\varrho)j=1,\ldots,12N$
Then the disks are pairwise disjoint and are disjoint from $\Sigma$;
for $j\in\widetilde{\Lambda^c}$ they are also disjoint from the
relevant critical curves.

We next introduce the piecewise meromorphic interpolation matrix
$\mathcal G$. For $j\in\widetilde\nabla$ the original exponentially
decaying pole is removed directly, whereas for $n\in\Delta$ the
reciprocal triangular factors are used at the twelve members of
$\mathcal O_n$. Thus
\begin{equation}
\label{Gdef}
\mathcal G(\lambda)
=
\begin{cases}
\displaystyle
I-\frac{B_j}{\lambda-\zeta_j},
&
\lambda\in\mathbb D_j,\quad j\in\widetilde\nabla,
\\[3mm]
I-\dfrac{\lambda-\zeta_n}{\beta_n}E_{2,1},
&
\lambda\in\mathbb D_n,\quad n\in\Delta,
\\[3mm]
I-\dfrac{\lambda-\omega\bar\zeta_n}{\omega\bar\beta_n}E_{3,1},
&
\lambda\in\mathbb D_{n+N},\quad n\in\Delta,
\\[3mm]
I-\dfrac{\lambda-\omega\zeta_n}{\omega\beta_n}E_{1,3},
&
\lambda\in\mathbb D_{n+2N},\quad n\in\Delta,
\\[3mm]
I-\dfrac{\lambda-\omega^2\bar\zeta_n}{\omega^2\bar\beta_n}E_{2,3},
&
\lambda\in\mathbb D_{n+3N},\quad n\in\Delta,
\\[3mm]
I-\dfrac{\lambda-\omega^2\zeta_n}{\omega^2\beta_n}E_{3,2},
&
\lambda\in\mathbb D_{n+4N},\quad n\in\Delta,
\\[3mm]
I-\dfrac{\lambda-\bar\zeta_n}{\bar\beta_n}E_{1,2},
&
\lambda\in\mathbb D_{n+5N},\quad n\in\Delta,
\\[3mm]
I-\dfrac{\lambda+\zeta_n}{\beta_n}E_{1,2},
&
\lambda\in\mathbb D_{n+6N},\quad n\in\Delta,
\\[3mm]
I-\dfrac{\lambda+\omega\bar\zeta_n}{\omega\bar\beta_n}E_{1,3},
&
\lambda\in\mathbb D_{n+7N},\quad n\in\Delta,
\\[3mm]
I-\dfrac{\lambda+\omega\zeta_n}{\omega\beta_n}E_{3,1},
&
\lambda\in\mathbb D_{n+8N},\quad n\in\Delta,
\\[3mm]
I-\dfrac{\lambda+\omega^2\bar\zeta_n}{\omega^2\bar\beta_n}E_{3,2},
&
\lambda\in\mathbb D_{n+9N},\quad n\in\Delta,
\\[3mm]
I-\dfrac{\lambda+\omega^2\zeta_n}{\omega^2\beta_n}E_{2,3},
&
\lambda\in\mathbb D_{n+10N},\quad n\in\Delta,
\\[3mm]
I-\dfrac{\lambda+\bar\zeta_n}{\bar\beta_n}E_{2,1},
&
\lambda\in\mathbb D_{n+11N},\quad n\in\Delta,
\\[3mm]
I,
&
\text{elsewhere}.
\end{cases}
\end{equation}
Here $\beta_n$ is defined in \eqref{betandefnew}, \(E_{i,j}\) denote the \(3\times 3\) matrix with a \(1\) in the \((i,j)\)-entry and \(0\)'s elsewhere and the coefficients
at the remaining eleven points are those in
\eqref{firstsixresiduematricesnew} and
\eqref{secondsixresiduematricesnew}. The reciprocal factors at the negated points are fixed by
the cofactor reduction.

The definition \eqref{Gdef} is compatible with all three reductions:
\begin{align}\nonumber
\mathcal G(\omega\lambda)
=
\mathcal A
\mathcal G(\lambda)
\mathcal A^{-1},
\mathcal G(\lambda)
=
\mathcal B
\overline{\mathcal G(\bar\lambda)}
\mathcal B^{-1},
\mathcal G(-\lambda)=
\mathcal G(\lambda)^{-T}.
\end{align}
\endgroup
We also have
\begin{equation*}
\det\mathcal G(\lambda)=1.
\end{equation*}

Let
\begin{equation*}
\breve\Lambda
:=
\widetilde{\Lambda^c}
=
\widetilde\Delta\cup\widetilde\nabla.
\end{equation*}
Consider the contour
\begin{equation}
\label{Sigma1}
\Sigma^{(1)}
=
\Sigma\cup\Sigma^{(ci)},
\qquad
\Sigma^{(ci)}
=
\bigcup_{j\in\breve\Lambda}
\partial\mathbb D_j.
\end{equation}
The circles in the odd sectors
$D_{2\nu-1}$ are oriented counterclockwise and those contained in the
even sectors $D_{2\nu}$ are oriented clockwise,
$\nu=1,2,3$.

Define the first transformation by
\begin{equation}
M^{(1)}(\lambda)
=
M(\lambda)
\mathcal G(\lambda)
T(\lambda).
\nonumber
\end{equation}
The function $M^{(1)}$ satisfies the following RH problem.

\begin{RHP}
\label{RHP:3.1}

Find a $3\times3$ matrix-valued function
\[
M^{(1)}(\lambda)
:=
M^{(1)}(x,t,\lambda)
\]
with the following properties.

\begin{itemize}

\item Analyticity:~$M^{(1)}$ is meromorphic in
$\mathbb C\setminus\Sigma^{(1)}$ and has simple poles only at $\mathcal P_\Lambda$.
\item Jump condition:
\begin{equation}
M^{(1)}_+(\lambda)
=
M^{(1)}_-(\lambda)V^{(1)}(\lambda),
\qquad
\lambda\in\Sigma^{(1)}.
\end{equation}
where
\begin{equation}
\label{V1compact}
V^{(1)}(\lambda)
=
\begin{cases}
V_{\mathbb R}^{(1)}(\lambda),
& \lambda\in\mathbb R,
\\[2mm]
\mathcal A
V_{\mathbb R}^{(1)}(\omega^2\lambda)
\mathcal A^{-1},
& \lambda\in\omega\mathbb R,
\\[2mm]
\mathcal A^2
V_{\mathbb R}^{(1)}(\omega\lambda)
\mathcal A^{-2},
& \lambda\in\omega^2\mathbb R,
\\[2mm]
T(\lambda)^{-1}
\mathcal G(\lambda)
T(\lambda),
&
\lambda\in
\partial\mathbb D_j\cap D_{2\nu-1},
\quad \nu=1,2,3,
\\[2mm]
T(\lambda)^{-1}
\mathcal G(\lambda)^{-1}
T(\lambda),
&
\lambda\in
\partial\mathbb D_j\cap D_{2\nu},
\quad \nu=1,2,3.
\end{cases}
\end{equation}

\begin{equation}
\label{rhodef}
\rho(\lambda)
=
\begin{cases}
r(\lambda),
& \lambda\in\mathbb R\setminus I(\xi),
\\[2mm]
\displaystyle
-\frac{r(\lambda)}{1-|r(\lambda)|^2},
& \lambda\in I(\xi),
\end{cases}
(T_{ij})_\pm
:=
\frac{(T_i)_\pm}{(T_j)_\pm}.
\end{equation}

\begin{equation}
\label{VR1}
V_{\mathbb R}^{(1)}(\lambda)
=
\begin{cases}
\displaystyle
\begin{pmatrix}
1&0&0\\
\overline{\rho(\lambda)}T_{12}(\lambda)
e^{-it\theta_{12}(\lambda)}&1&0\\
0&0&1
\end{pmatrix}
\begin{pmatrix}
1&-\rho(\lambda)T_{21}(\lambda)
e^{it\theta_{12}(\lambda)}&0\\
0&1&0\\
0&0&1
\end{pmatrix},
&
\lambda\in\mathbb R_+\setminus I(\xi),
\\[6mm]
\displaystyle
\begin{pmatrix}
1&\rho(\lambda)(T_{21})_-(\lambda)
e^{it\theta_{12}(\lambda)}&0\\
0&1&0\\
0&0&1
\end{pmatrix}
\begin{pmatrix}
1&0&0\\
-\overline{\rho(\lambda)}(T_{12})_+(\lambda)
e^{-it\theta_{12}(\lambda)}&1&0\\
0&0&1
\end{pmatrix},
&
\lambda\in I(\xi)\cap\mathbb R_+,
\\[6mm]
\displaystyle
\begin{pmatrix}
1&-\overline{\rho(\lambda)}T_{21}(\lambda)
e^{it\theta_{12}(\lambda)}&0\\
0&1&0\\
0&0&1
\end{pmatrix}
\begin{pmatrix}
1&0&0\\
\rho(\lambda)T_{12}(\lambda)
e^{-it\theta_{12}(\lambda)}&1&0\\
0&0&1
\end{pmatrix},
&
\lambda\in\mathbb R_-\setminus I(\xi),
\\[6mm]
\displaystyle
\begin{pmatrix}
1&0&0\\
-\rho(\lambda)(T_{12})_-(\lambda)
e^{-it\theta_{12}(\lambda)}&1&0\\
0&0&1
\end{pmatrix}
\begin{pmatrix}
1&\overline{\rho(\lambda)}(T_{21})_+(\lambda)
e^{it\theta_{12}(\lambda)}&0\\
0&1&0\\
0&0&1
\end{pmatrix},
&
\lambda\in I(\xi)\cap\mathbb R_- .
\end{cases}
\end{equation}

\item Symmetry:
\begin{align}\nonumber
M^{(1)}(\omega\lambda)
=
\mathcal A
M^{(1)}(\lambda)
\mathcal A^{-1},
M^{(1)}(\lambda)
=
\mathcal B
\overline{M^{(1)}(\bar\lambda)}
\mathcal B^{-1},
M^{(1)}(-\lambda)
=
M^{(1)}(\lambda)^{-T}.
\end{align}

\item Asymptotic behavior:
As $\lambda\to\infty$,
\begin{equation}
M^{(1)}(\lambda)
=
I+O(\lambda^{-1}).
\nonumber
\end{equation}
As $\lambda\to0$,
\begin{equation}
M^{(1)}(\lambda)
=
G(x,t)+O(\lambda).
\nonumber
\end{equation}
Indeed, the disks are disjoint from the origin,
$\mathcal G(\lambda)=I$ near $\lambda=0$, and $T(0)=I$.

\item
At each retained pole
$\zeta_j\in\mathcal P_\Lambda$,
\begin{equation}
\operatorname*{Res}_{\lambda=\zeta_j}
M^{(1)}(\lambda)
=
\lim_{\lambda\to\zeta_j}
M^{(1)}(\lambda)
\bigl(T(\lambda)^{-1}B_jT(\lambda)\bigr).
\nonumber
\end{equation}

Since $\zeta_j\in\mathcal P_\Lambda$, the matrix $T$ is analytic and
invertible at $\zeta_j$.

\end{itemize}

\end{RHP}

\section{Hybrid $\bar\partial$-RH problem}
\label{s:4}

We open lenses around $\mathbb R\cup\omega\mathbb R\cup\omega^2\mathbb R$
to replace the original jumps by ones compatible with the decay of
$e^{\pm it\theta_{ij}}$. Throughout, $\xi=x/t$,
$\bar\partial=\tfrac12(\partial_s+i\partial_v)$ for $\lambda=s+iv$,
and the continuous datum has the Schwartz regularity and flatness at
zero stated in Theorem~\ref{thm:soliton-resolution}.
Proposition~\ref{prop:reflection-positivity} gives
$d=1-|r|^2\geq d_*>0$. We fix $|\xi|\ne1$ and use the splitting
\eqref{discretesplitting}. Constants may depend on $\xi$ and the
scattering data, but are independent of $t$.

Use the function $\rho$ from \eqref{rhodef} and set
\begin{equation}
\rho_0(s)=
\begin{cases}
\overline{\rho(s)},&s>0,\\[1mm]
-\rho(s),&s<0,\\[1mm]
0,&s=0.
\end{cases}
\nonumber
\end{equation}
Thus $\rho_0(-s)=-\overline{\rho_0(s)}$. The original six rays
retain the outward orientation of \eqref{Sigmadef}.

\subsection{Opening $\bar\partial$-lenses in
$\xi\in(-\infty,-1]\cup[1,+\infty)$}
\label{s:4.1}

Fix $\phi_0\in(0,\pi/6)$ sufficiently small that the closed angular
neighborhoods of the six rays do not meet the pole disks
$\overline{\mathbb D_j}$. Put $q_0=\tan\phi_0$ and define
\begin{equation}
\Omega_{\rm up}=\{s+iv:s\ne0,\ 0<v<q_0|s|\},\qquad
\Omega_{\rm down}=\{s+iv:s\ne0,\ -q_0|s|<v<0\}.
\nonumber
\end{equation}
For $\xi>1$, take $\Omega_1=\Omega_{\rm down}$ and $\Omega_2=\Omega_{\rm up}$;
for $\xi<-1$, take $\Omega_1=\Omega_{\rm up}$ and $\Omega_2=\Omega_{\rm down}$.
Set
\begin{equation}\label{sec4:sixdomains}
\Omega_3=\omega\Omega_1,\qquad\Omega_4=\omega\Omega_2,\qquad
\Omega_5=\omega^2\Omega_1,\qquad\Omega_6=\omega^2\Omega_2,\qquad
\Omega=\bigcup_{j=1}^{6}\Omega_j.
\end{equation}

\begin{figure}[htbp]
\centering
\begin{tikzpicture}[
>=stealth,
scale=0.94,
secfourarrow/.style={
    black,
    line width=0.85pt,
    postaction={decorate},
    decoration={
        markings,
        mark=at position 0.58 with {\arrow{stealth}}
    }
},
every node/.style={font=\scriptsize}
]

\begin{scope}[xshift=-3.8cm,scale=0.84]
\foreach \ang/\up/\down in {0/2/1,60/5/6,120/4/3,180/1/2,240/6/5,300/3/4}{
    \begin{scope}[rotate=\ang]

        \fill[gray!10] (0,0)--(12:2.7)--(0:2.7)--(-12:2.7)--cycle;

        \draw[
            densely dashed,
            black!55,
            line width=0.6pt
        ] (0,0)--(0:3.05);

        \draw[secfourarrow] (12:2.7)--(0,0);
        \draw[secfourarrow] (0,0)--(-12:2.7);

        \node[fill=white,inner sep=0.4pt] at (6:1.87) {$\Omega_{\up}$};
        \node[fill=white,inner sep=0.4pt] at (-6:1.87) {$\Omega_{\down}$};

        \node at (14:3.03) {$\Sigma_{\up}$};
        \node at (-14:3.03) {$\Sigma_{\down}$};

    \end{scope}
}

\fill[black] (0,0) circle (1.3pt);
\node[fill=white,inner sep=0.8pt] at (0,-0.22) {$0$};
\node[font=\small] at (0,-3.38) {(a) $\xi>1$};
\end{scope}

\begin{scope}[xshift=3.8cm,scale=0.84]
\foreach \ang/\up/\down in {0/1/2,60/6/5,120/3/4,180/2/1,240/5/6,300/4/3}{
    \begin{scope}[rotate=\ang]

        \fill[gray!10] (0,0)--(12:2.7)--(0:2.7)--(-12:2.7)--cycle;

        \draw[
            densely dashed,
            black!55,
            line width=0.6pt
        ] (0,0)--(0:3.05);

        \draw[secfourarrow] (12:2.7)--(0,0);
        \draw[secfourarrow] (0,0)--(-12:2.7);

        \node[fill=white,inner sep=0.4pt] at (6:1.87) {$\Omega_{\up}$};
        \node[fill=white,inner sep=0.4pt] at (-6:1.87) {$\Omega_{\down}$};

        \node at (14:3.03) {$\Sigma_{\up}$};
        \node at (-14:3.03) {$\Sigma_{\down}$};

    \end{scope}
}

\fill[black] (0,0) circle (1.3pt);
\node[fill=white,inner sep=0.8pt] at (0,-0.22) {$0$};
\node[font=\small] at (0,-3.38) {(b) $\xi<-1$};
\end{scope}

\end{tikzpicture}

\caption{Exterior lens domains. Dashed lines are the original spectral
lines; solid lines are the oriented lips. The labels follow the fixed
matrix positions used in \eqref{sec4:outerRbase}.
Each lip has its adjacent lens on its left.}
\label{fig:sec4:outer}
\end{figure}

The numbering assigns a fixed off-diagonal position to each domain.
In every region, $e^{-it\theta_{12}}$ decreases in $\Omega_1$,
and $e^{it\theta_{12}}$ decreases in $\Omega_2$.
Let $\Sigma_j$ be the non-real-line boundary of $\Omega_j$ and put
$\Sigma^{(ju)}=\bigcup_{j=1}^{6}\Sigma_j$.
Every lip is oriented so that the adjacent lens is on its left.

For $\lambda=\ell e^{i\phi}$ in the upper half-plane,
\begin{equation}\label{sec4:outerphase}
\Im\theta_{12}(\lambda)
\geq c\sin\phi\,(\ell+\ell^{-1}),\xi>1,
-\Im\theta_{12}(\lambda)
\geq c\sin\phi\,(\ell+\ell^{-1}),\xi<-1.
\end{equation}
The lower-half-plane inequalities have the opposite phase sign.
They follow directly from \eqref{Imtheta12}.
The contours are shown in Figure~\ref{fig:sec4:outer}.

\begin{lemma}
\label{lem:sec4:outerextension}
There exist bounded functions $R_j:\overline{\Omega_j}\to\mathbb C$,
$j=1,\ldots,6$, continuous on the closed domains and with continuous
first partial derivatives in each open component, with boundary values

\begin{equation}
R_1(\lambda)=
\begin{cases}
\rho_0(\lambda)(T_{12})_{\Omega_1}(\lambda),&\lambda\in\mathbb R,\\[2mm]
0,&\lambda\in\Sigma_1.
\end{cases}
R_2(\lambda)=
\begin{cases}
\overline{\rho_0(\lambda)}(T_{21})_{\Omega_2}(\lambda),&\lambda\in\mathbb R,\\[2mm]
0,&\lambda\in\Sigma_2.
\end{cases}
\nonumber
\end{equation}

\begin{equation}
R_3(\lambda)=
\begin{cases}
\rho_0(\omega^2\lambda)(T_{31})_{\Omega_3}(\lambda),&\lambda\in\omega\mathbb R,\\[2mm]
0,&\lambda\in\Sigma_3.
\end{cases}
R_4(\lambda)=
\begin{cases}
\overline{\rho_0(\omega^2\lambda)}(T_{13})_{\Omega_4}(\lambda),&\lambda\in\omega\mathbb R,\\[2mm]
0,&\lambda\in\Sigma_4.
\end{cases}
\nonumber
\end{equation}
\begin{equation}
R_5(\lambda)=
\begin{cases}
\rho_0(\omega\lambda)(T_{23})_{\Omega_5}(\lambda),&\lambda\in\omega^2\mathbb R,\\[2mm]
0,&\lambda\in\Sigma_5.
\end{cases}
R_6(\lambda)=
\begin{cases}
\overline{\rho_0(\omega\lambda)}(T_{32})_{\Omega_6}(\lambda),&\lambda\in\omega^2\mathbb R,\\[2mm]
0,&\lambda\in\Sigma_6.
\end{cases}
\nonumber
\end{equation}
All six functions vanish at zero and decrease rapidly at infinity.
For $j=1,2$ write $s=\Re\lambda$; for $j=3,4$ write
$s=\Re(\omega^2\lambda)$; and for $j=5,6$ write
$s=\Re(\omega\lambda)$. Uniformly in the lenses,
\begin{equation}
|R_j(\lambda)|\leq C|\rho_0(s)|,
\nonumber
\end{equation}
and
\begin{equation}\label{sec4:outerdbarbound}
|\bar\partial R_j(\lambda)|
\leq C\left(|\rho_0'(s)|+\frac{|\rho_0(s)|}{|s|}\right)
\leq C\left(|\rho_0'(s)|+|\lambda|^{-1/2}\right).
\end{equation}
The first bound on the right is flat at zero and rapidly decreasing
at infinity. The functions satisfy
\begin{equation}\label{sec4:scalarreductions}
R_2(\bar\lambda)=\overline{R_1(\lambda)},
R_2(-\lambda)=-R_1(\lambda),
R_3(\omega\lambda)=R_1(\lambda),
R_4(\omega\lambda)=R_2(\lambda),
R_5(\omega^2\lambda)=R_1(\lambda),
R_6(\omega^2\lambda)=R_2(\lambda).
\end{equation}
\end{lemma}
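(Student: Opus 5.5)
We construct $R_1$ and $R_2$ explicitly on the two components of their lenses, then define $R_3,\dots,R_6$ by rotation. The standard $\bar\partial$-extension is the one used for NLS-type problems. In polar coordinates $\lambda=\ell e^{i\phi}$ on the component of $\Omega_1$ adjacent to $\mathbb R_+$, set
\begin{equation*}
R_1(\lambda)=\rho_0(\ell)\,(T_{12})_{\Omega_1}(\lambda)\,\cos\Bigl(\frac{\pi\phi}{2\phi_0}\Bigr),
\end{equation*}
and make the analogous choice on the component adjacent to $\mathbb R_-$, with $\rho_0(-\ell)$ and the angle measured from $\pi$.

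For $|\xi|>1$ the domain $I(\xi)$ is either $\mathbb R$ or empty. In either case $T$ is analytic, bounded, and invertible in the closed lens away from the pole disks, because $\phi_0$ was chosen so that the lenses avoid $\overline{\mathbb D_j}$. Moreover $T(0)=I$ and $T\to I$ at infinity. Consequently $T_{12}$ and $T_{21}$ are bounded, with bounded inverses, uniformly on $\overline\Omega_1\cup\overline\Omega_2$.

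The boundary values then follow directly from the construction:
\begin{itemize}
\item At $\phi=0$ the cutoff $\cos$ equals $1$, so $R_1$ equals $\rho_0 T_{12}$ on $\mathbb R$.
\item At $|\phi|=\phi_0$ the cosine vanishes, so $R_1=0$ on $\Sigma_1$.
\item Since $r(0)=0$ and $r$ is flat at zero, $\rho_0(0)=0$ and hence $R_1$ vanishes at the origin.
\item Rapid decay at infinity is inherited from $\rho_0$, because $T$ is bounded there.
\end{itemize}
The pointwise bound $|R_1|\le C|\rho_0(\ell)|$ is immediate. The statement writes $s=\Re\lambda$, but since $\ell$ and $|s|$ are comparable in the lens (ratio bounded by $\sec\phi_0$), we may work with $\rho_0(\ell)$ and pass to $s$ afterward.

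For the derivative we use $\bar\partial=\tfrac{e^{i\phi}}{2}(\partial_\ell+\tfrac{i}{\ell}\partial_\phi)$. Because $T_{12}$ is analytic, it passes through $\bar\partial$, and we get
\begin{equation*}
\bar\partial R_1=\frac{e^{i\phi}}{2}\,T_{12}\Bigl(\rho_0'(\ell)\cos\tfrac{\pi\phi}{2\phi_0}-\frac{i\pi}{2\phi_0\ell}\rho_0(\ell)\sin\tfrac{\pi\phi}{2\phi_0}\Bigr).
\end{equation*}
This yields the first inequality in \eqref{sec4:outerdbarbound}.

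For the second inequality we bound $|\rho_0(\ell)|/\ell$ by $C\ell^{-1/2}$. We use Cauchy--Schwarz in the form
\begin{equation*}
|\rho_0(\ell)|=\Bigl|\int_0^\ell\rho_0'\Bigr|\le \ell^{1/2}\|\rho_0'\|_{L^2}.
\end{equation*}
This holds because $\rho_0\in H^1$. The function $\rho_0$ is either $r$ or $-r/(1-|r|^2)$, and Proposition~\ref{prop:reflection-positivity} gives $1-|r|^2\ge d_*$, so $\rho_0$ inherits the Schwartz regularity of $r$.

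One point needs care at the origin. The extension has a corner there, and $\rho_0'$ may be discontinuous at $0$, since $\rho_0(-s)=-\overline{\rho_0(s)}$ involves a conjugation. This is why the lemma only asks for continuity of first partials in each open component, and the flatness of $r$ at zero makes the resulting function continuous across the origin.

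For $R_2$, rather than repeating the construction, we define $R_2(\lambda):=\overline{R_1(\bar\lambda)}$. Then on $\mathbb R$,
\begin{equation*}
R_2=\overline{\rho_0}\,\overline{T_{12}(\bar\lambda)}=\overline{\rho_0}\,T_{21}(\lambda),
\end{equation*}
using the reality reduction \eqref{Treductions} together with $\mathcal B$, which swaps indices $1$ and $2$. The domain $\Omega_2$ is exactly the conjugate of $\Omega_1$ in both regimes, so the definition lands on the correct lens.

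We then check $R_2(-\lambda)=-R_1(\lambda)$. It follows from $\rho_0(-s)=-\overline{\rho_0(s)}$ together with $T(-\lambda)=T(\lambda)^{-1}$, which gives $T_{21}(-\lambda)=T_{12}(\lambda)$. The cosine cutoff is symmetric under $\lambda\mapsto-\lambda$ with the angle measured from the nearest real half-axis.

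Finally, set $R_3(\lambda)=R_1(\omega^2\lambda)$, $R_4(\lambda)=R_2(\omega^2\lambda)$, $R_5(\lambda)=R_1(\omega\lambda)$, and $R_6(\lambda)=R_2(\omega\lambda)$. The cyclic reduction $T(\omega\lambda)=\mathcal A T\mathcal A^{-1}$ converts $T_{12}(\omega^2\lambda)$ into $T_{31}(\lambda)$, and similarly for the others. This gives the stated boundary values and the remaining relations in \eqref{sec4:scalarreductions}. All the estimates carry over from $R_1$ and $R_2$, because rotation is an isometry and commutes with $\bar\partial$ up to a unimodular factor.

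The main obstacle is the index bookkeeping in these reductions, especially under the $\mathcal B$-conjugation. We must verify that $\overline{T_{12}(\bar\lambda)}=T_{21}(\lambda)$ and that the $\mathbb Z_3$ reduction produces exactly $T_{31},T_{13},T_{23},T_{32}$ in the stated slots. The checks are short, but sign or index slips here would break the symmetry relations.
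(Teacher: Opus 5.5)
Your overall scheme is the paper's. You take a cutoff times $\rho_0$ times the analytic ratio $T_{12}$ on $\Omega_1$, then set $R_2(\lambda)=\overline{R_1(\bar\lambda)}$ using $\overline{T_{12}(\bar\lambda)}=T_{21}(\lambda)$. The $\mathbb Z_3$ transport of the ratios handles $R_3,\dots,R_6$, and $|\bar\partial R|$ is invariant under rotation and reflection. That bookkeeping, the boundary values, and your Cauchy--Schwarz bound $|\rho_0(s)|/|s|\le |s|^{-1/2}\|\rho_0'\|_{L^2}$ are all fine.

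The gap is where you evaluate $\rho_0$. You take $R_1=\rho_0(|\lambda|)\,T_{12}\cos(\pi\phi/(2\phi_0))$. This gives $|R_1|\le C|\rho_0(|\lambda|)|$ and $|\bar\partial R_1|\le C(|\rho_0'(|\lambda|)|+|\rho_0(|\lambda|)|/|\lambda|)$. The lemma asserts these bounds with $s=\Re\lambda$. Your transfer step, that $|\lambda|$ and $|s|$ are comparable so you may ``pass to $s$ afterward'', is not valid. Comparable arguments do not give comparable values of $\rho_0$ or $\rho_0'$.

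Two examples show this.
\begin{itemize}
\item If $\rho_0$ vanishes at some $s_0>0$ but not on $(s_0,s_0\sec\phi_0]$, then $R_1(\lambda)\neq0$ at points of $\Omega_1$ with $\Re\lambda=s_0$. There the right side $C|\rho_0(\Re\lambda)|$ is $0$.
\item Take a positive monotone profile flat at zero, say $\rho_0(s)=e^{-1/s}$ near $0$. At $\lambda\in\Omega_1$ with $|\lambda|=\ell$ and fixed angle $\phi\in(0,\phi_0)$ from $\mathbb R_+$, one has $\rho_0(\ell)/\rho_0(\ell\cos\phi)=\exp\bigl((1-\cos\phi)/(\ell\cos\phi)\bigr)\to\infty$ as $\ell\to0$. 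Since $T_{12}\to1$ near $0$, the bound $|R_1|\le C|\rho_0(\Re\lambda)|$ fails uniformly. The same computation for $\rho_0'$ breaks the first inequality in \eqref{sec4:outerdbarbound}: with $\rho_0$ real, the two terms in your formula for $\bar\partial R_1$ are real and imaginary, so they cannot cancel.
\end{itemize}

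The repair is immediate. Evaluate $\rho_0$ at $s=\Re\lambda$, as the paper does with cutoff $\cos^2(\pi v/(2q_0|s|))$; your angular cutoff also works. Then $\bar\partial[\rho_0(\Re\lambda)]=\tfrac12\rho_0'(s)$ and $|\bar\partial\chi|\le C/|\lambda|\le C/|s|$, and the rest of your argument goes through verbatim.

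Alternatively, keep the polar extension and state the bounds in terms of $|\lambda|$. That variant still feeds the Cauchy--Green estimates of Section~\ref{s:dbar-contribution}, after the change of variables $\ell=\sqrt{s^2+v^2}$ on horizontal slices. But it is not the lemma as written.

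A minor point: for $\xi<-1$, $T_{12}$ jumps across $\mathbb R$. It is analytic only in the open lens, with bounded one-sided traces, not in the closed lens. This is why the prescribed boundary value is $(T_{12})_{\Omega_1}$.
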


\begin{proof}
For $\lambda=s+iv\in\Omega_1$, put
\begin{equation}
R_1(\lambda)=\rho_0(s)T_{12}(\lambda)
\cos^2\!\left(\frac{\pi v}{2q_0|s|}\right),
\qquad
R_2(\lambda)=\overline{R_1(\bar\lambda)}\quad(\lambda\in\Omega_2).
\nonumber
\end{equation}
The identity $\overline{T_{12}(\bar\lambda)}=T_{21}(\lambda)$ and
\begin{equation}\label{sec4:ratiotransport}
T_{31}(\omega\lambda)=T_{12}(\lambda),
T_{13}(\omega\lambda)=T_{21}(\lambda),
T_{23}(\omega^2\lambda)=T_{12}(\lambda),
T_{32}(\omega^2\lambda)=T_{21}(\lambda)
\end{equation}
prove all six real-line prescriptions. The angular cutoff is zero
on each lip and one on each original spectral line.

Writing the cutoff as $\chi(s,v)$, direct differentiation gives
$|\bar\partial\chi|\leq C/|s|$ because $|v|\leq q_0|s|$.
The ratios of $T$ are analytic and uniformly bounded in the lenses,
which avoid their discrete divisors. Hence
$\bar\partial R_1=T_{12}\left(\tfrac12\rho_0'(s)\chi+\rho_0(s)\bar\partial\chi\right)$.
This proves \eqref{sec4:outerdbarbound}. Flatness at zero and rapid
decay at infinity prove the remaining bound and the endpoint assertions.
For a rotation $z=c\lambda$, $|c|=1$, the chain rule gives
$\bar\partial_\lambda R(c\lambda)=\bar c(\bar\partial R)(c\lambda)$;
reflected conjugation also preserves the magnitude of this derivative.
Thus every estimate holds for $R_2,\ldots,R_6$.
Finally, $\rho_0(-s)=-\overline{\rho_0(s)}$ and
$T_{12}(-\bar\lambda)=\overline{T_{12}(\lambda)}$
prove $R_1(-\bar\lambda)=-\overline{R_1(\lambda)}$, and hence
\eqref{sec4:scalarreductions}.
\end{proof}

\begin{equation}\label{sec4:outerRbase}
\mathcal R^{(2)}(\lambda)
=
\begin{cases}
\begin{pmatrix}
1&0&0\\R_1(\lambda)e^{-it\theta_{12}(\lambda)}&1&0\\0&0&1
\end{pmatrix},&\lambda\in\Omega_1,
\\[3mm]
\begin{pmatrix}
1&R_2(\lambda)e^{it\theta_{12}(\lambda)}&0\\0&1&0\\0&0&1
\end{pmatrix},&\lambda\in\Omega_2,
\\[3mm]
\begin{pmatrix}
1&0&R_3(\lambda)e^{it\theta_{13}(\lambda)}\\0&1&0\\0&0&1
\end{pmatrix},&\lambda\in\Omega_3,
\\[3mm]
\begin{pmatrix}
1&0&0\\0&1&0\\R_4(\lambda)e^{-it\theta_{13}(\lambda)}&0&1
\end{pmatrix},&\lambda\in\Omega_4,
\\[3mm]
\begin{pmatrix}
1&0&0\\0&1&0\\0&R_5(\lambda)e^{-it\theta_{23}(\lambda)}&1
\end{pmatrix},&\lambda\in\Omega_5,
\\[3mm]
\begin{pmatrix}
1&0&0\\0&1&R_6(\lambda)e^{it\theta_{23}(\lambda)}\\0&0&1
\end{pmatrix},&\lambda\in\Omega_6,
\\[2mm]
I,&\lambda\notin\Omega.
\end{cases}
\end{equation}
The three reductions of the extension are
\begin{equation}\label{sec4:Rreductions}
\mathcal R^{(2)}(\omega\lambda)
=\mathcal A\mathcal R^{(2)}(\lambda)\mathcal A^{-1},
\mathcal R^{(2)}(\lambda)
=\mathcal B\overline{\mathcal R^{(2)}(\bar\lambda)}\mathcal B^{-1},
\mathcal R^{(2)}(-\lambda)=\mathcal R^{(2)}(\lambda)^{-T},
\qquad \det\mathcal R^{(2)}=1.
\end{equation}

\subsection{Opening $\bar\partial$-lenses in $-1<\xi<1$}
\label{s:4.2}

Put $a=\lambda_0=\sqrt{(1-\xi)/(1+\xi)}$ and label the two real
stationary points by $\lambda_1=a$, $\lambda_2=-a$. Thus
\begin{equation*}
\mathcal S(\xi)=\{\lambda_1,\lambda_2,\omega \lambda_1,\omega \lambda_2,
\omega^2 \lambda_1,\omega^2 \lambda_2\}.
\end{equation*}
For $s\in\mathbb R\setminus\{0,-a,a\}$ define the lip height
\begin{equation}\label{sec4:innerdomains}
h(s)=
\begin{cases}
q_0(|s|-a),&|s|>a,\\[1mm]
q_0|s|(a-|s|)/a,&0<|s|<a.
\end{cases}
\end{equation}
The domains carrying $e^{-it\theta_{12}}$ and $e^{it\theta_{12}}$ are
\begin{equation}\label{sec4:innerfixedpositions}
\begin{aligned}
\Omega_1={}&\{s+iv:|s|>a,\ -h(s)<v<0\}\quad\cup\{s+iv:0<|s|<a,\ 0<v<h(s)\},\\
\Omega_2={}&\{s+iv:|s|>a,\ 0<v<h(s)\}\quad\cup\{s+iv:0<|s|<a,\ -h(s)<v<0\}.
\end{aligned}
\end{equation}
Define $\Omega_3,\ldots,\Omega_6$ by \eqref{sec4:sixdomains}.
Their lips $\Sigma_j$ are oriented with the lens on the left.
The closures are contained in the angular neighborhoods already chosen
and therefore avoid the pole disks. No additional jumps are placed
on the phase zero circle $|\lambda|=a$.

By \eqref{Imtheta12}, these domains satisfy
\begin{equation}\label{sec4:innerphasesigns}
\Im\theta_{12}<0\quad\hbox{in }\Omega_1,
\qquad
\Im\theta_{12}>0\quad\hbox{in }\Omega_2,
\end{equation}
and, near either real stationary point,
\begin{equation}\label{sec4:quadraticdecay}
|\Im\theta_{12}(s+iv)|\geq c|v|\,|s-\lambda_i|.
\end{equation}
The same estimates hold after rotation.

On each connected lip let $i$ identify its stationary endpoint:
$\lambda_i$ on $\Sigma_1\cup\Sigma_2$, $\omega\lambda_i$ on
$\Sigma_3\cup\Sigma_4$, and $\omega^2\lambda_i$ on
$\Sigma_5\cup\Sigma_6$, with $i=1,2$ and no summation. Set
\begin{equation*}
\nu(s)=-\frac1{2\pi}\log d(s),\qquad
\eta_i=(-1)^{i+1},\qquad
\nu(\lambda_1)=\nu(\lambda_2)=\nu_a:=\nu(a).
\end{equation*}
The equality follows from \eqref{reflectioncofactorsymmetry}.
The local powers are
\begin{equation}\label{sec4:localpowers}
\bigl(\eta_1(z-\lambda_1)\bigr)^{2i\eta_1\nu(\lambda_1)}=(z-a)^{2i\nu_a},
\qquad
\bigl(\eta_2(z-\lambda_2)\bigr)^{2i\eta_2\nu(\lambda_2)}=(-z-a)^{-2i\nu_a},
\end{equation}
with the principal logarithm of the displayed local variable; its
traces on the negative axis have arguments $\pm\pi$. On the rotated
lines use $\eta_i(\omega^2\lambda-\lambda_i)$ and
$\eta_i(\omega\lambda-\lambda_i)$, respectively.

\begin{figure}[htbp]
\centering
\begin{tikzpicture}[
>=stealth,
scale=1.4,
secfourarrow/.style={
    black,
    line width=0.9pt,
    postaction={decorate},
    decoration={
        markings,
        mark=at position 0.55 with {\arrow{stealth}}
    }
},
every node/.style={font=\scriptsize}
]

\begin{scope}[xshift=-3.9cm,scale=0.89]

\def\aa{1.6}
\def\qq{0.32}
\def\LL{3.15}

\draw[
    densely dotted,
    black!40,
    line width=0.55pt
] (0,0) circle (\aa);

\foreach \ang/\up/\down in {
0/2/1,
60/5/6,
120/4/3,
180/1/2,
240/6/5,
300/3/4
}{
\begin{scope}[rotate=\ang]

    \draw[
        densely dashed,
        black!55,
        line width=0.6pt
    ] (0,0)--(\LL,0);

    \draw[
        secfourarrow,
        domain=1.6:0,
        samples=36
    ]
    plot (\x,{\qq*\x*(\aa-\x)/\aa});

    \draw[
        secfourarrow,
        domain=0:1.6,
        samples=36
    ]
    plot (\x,{-\qq*\x*(\aa-\x)/\aa});

    \draw[secfourarrow]
    (\LL,{\qq*(\LL-\aa)})--(\aa,0);

    \draw[secfourarrow]
    (\aa,0)--(\LL,{-\qq*(\LL-\aa)});

    \fill[black] (\aa,0) circle (1.7pt);

    \node[
        fill=white,
        inner sep=0.3pt
    ] at (2.71,0.13)
    {$\Omega_{\up}$};

    \node[
        fill=white,
        inner sep=0.3pt
    ] at (2.71,-0.13)
    {$\Omega_{\down}$};

\end{scope}
}

\fill[black] (0,0) circle (1.35pt);
\node[
fill=white,
inner sep=0.6pt
] at (0,-0.20)
{$0$};

\foreach \ang/\lab in {
0/{a},
60/{-\omega^2a},
120/{\omega a},
180/{-a},
240/{\omega^2a},
300/{-\omega a}
}{
\node[
fill=white,
inner sep=0.4pt
] at (\ang:1.99)
{$\lab$};
}

\node[
font=\small
] at (0,-3.68)
{(a) Six stationary points};

\end{scope}

\begin{scope}[xshift=0.3cm,yshift=0cm,scale=1.15]

\def\aa{2.0}
\def\qq{0.40}
\def\LL{4.0}

\draw[
    densely dashed,
    black!55,
    line width=0.6pt
]
(-0.15,0)--(4.2,0);

\draw[
secfourarrow,
domain=2.0:0,
samples=40
]
plot (\x,{\qq*\x*(\aa-\x)/\aa});

\draw[
secfourarrow,
domain=0:2.0,
samples=40
]
plot (\x,{-\qq*\x*(\aa-\x)/\aa});

\draw[secfourarrow]
(\LL,{\qq*(\LL-\aa)})--(\aa,0);

\draw[secfourarrow]
(\aa,0)--(\LL,{-\qq*(\LL-\aa)});

\fill[black] (0,0) circle (1.5pt);
\fill[black] (\aa,0) circle (1.8pt);

\node[
below=3pt
] at (0,0)
{$0$};

\node[
fill=white,
below=3pt
] at (\aa,0)
{$a=\lambda_1$};

\node[
fill=white,
inner sep=0.4pt
] at (0.88,0.16)
{$\Omega_1$};

\node[
fill=white,
inner sep=0.4pt
] at (0.88,-0.16)
{$\Omega_2$};

\node at (3.25,0.21)
{$\Omega_2$};

\node at (3.25,-0.21)
{$\Omega_1$};

\node at (0.80,0.72)
{$e^{-it\theta_{12}}E_{21}$};

\node at (0.80,-0.72)
{$e^{it\theta_{12}}E_{12}$};

\node at (3.20,1.03)
{$e^{it\theta_{12}}E_{12}$};

\node at (3.20,-1.03)
{$e^{-it\theta_{12}}E_{21}$};

\node[
font=\small
] at (1.97,-2.85)
{(b) Positive real-ray detail};

\end{scope}

\end{tikzpicture}

\caption{
Interior lenses. The dotted circle is a phase zero set, not a jump contour.
The domain numbers refer to fixed matrix positions: $\Omega_1$ carries
the $(2,1)$ entry and $\Omega_2$ the $(1,2)$ entry on both sides of each
real stationary point. The other domains are obtained by the stated rotations.
}
\label{fig:sec4:inner}
\end{figure}

Define the regularized ratios by
\begin{equation}
\begin{aligned}
T_{12}^{(i)}(z)
&=T_{12}(z)\bigl(\eta_i(z-\lambda_i)\bigr)^{-2i\eta_i\nu(\lambda_i)},
& T_{21}^{(i)}(z)&=\bigl(T_{12}^{(i)}(z)\bigr)^{-1},\\
T_{31}^{(i)}(\lambda)&=T_{12}^{(i)}(\omega^2\lambda),
& T_{13}^{(i)}(\lambda)&=\bigl(T_{31}^{(i)}(\lambda)\bigr)^{-1},\\
T_{23}^{(i)}(\lambda)&=T_{12}^{(i)}(\omega\lambda),
& T_{32}^{(i)}(\lambda)&=\bigl(T_{23}^{(i)}(\lambda)\bigr)^{-1}.
\end{aligned}
\nonumber
\end{equation}
Their endpoint values are finite sectorial limits. The coefficient
$\rho_0(\lambda_i)$ means the limit along the adjoining real interval:
\begin{equation}\nonumber
\rho_0(a+0)=\overline{r(a)},
\rho_0(a-0)=-\frac{\overline{r(a)}}{d(a)},
\rho_0(-a-0)=-r(a),
\rho_0(-a+0)=\frac{r(a)}{d(a)}.
\end{equation}
Choose $0\leq\chi_a\leq1$ smooth, equal to one for $|s-a|\leq a/8$
and supported in $|s-a|<a/3$, and put
\begin{equation}
\chi_1(z)=\chi_a(\Re z),\qquad
\chi_2(z)=\chi_a(-\Re z).
\nonumber
\end{equation}
Let $\mathcal X_{\mathcal P}$ be a smooth orbit-invariant cutoff,
equal to one within distance $\varrho/3$ of the poles and zero beyond
$2\varrho/3$; it is zero when the pole set is empty. The chosen
lenses lie in its zero set. Finally,
\begin{equation*}
\tau_a=\lim_{z\to a}(z-a)^{2i\nu_a}T_{21}(z),\qquad |\tau_a|=1,
\end{equation*}
and
\begin{equation}\label{sec4:endpointfactor}
(z-a)^{2i\nu_a}T_{21}(z)
=\tau_a+O\bigl(|z-a|(1+|\log|z-a||)\bigr).
\end{equation}
\begin{equation}\label{sec4:frozenTvalues}
\begin{aligned}
T_{12}^{(1)}(a)&=\tau_a^{-1},
& T_{12}^{(2)}(-a)&=\tau_a,\\
T_{31}^{(i)}(\omega \lambda_i)&=T_{12}^{(i)}(\lambda_i),
& T_{23}^{(i)}(\omega^2 \lambda_i)&=T_{12}^{(i)}(\lambda_i),
\end{aligned}
\end{equation}
with reciprocal endpoint values for the reciprocal ratios.

\begin{lemma}
\label{lem:sec4:innerextension}
There exist bounded functions
$R_j:\overline{\Omega_j}\setminus\mathcal S(\xi)\to\mathbb C$,
$j=1,\ldots,6$, continuous on their domains and with continuous
first partial derivatives in each open component, whose boundary
values are

\begin{equation}
R_1(\lambda)=
\begin{cases}
\rho_0(\lambda)(T_{12})_{\Omega_1}(\lambda),
&\lambda\in\mathbb R,\\[2mm]
\rho_0(\lambda_i)T_{12}^{(i)}(\lambda_i)
\bigl(\eta_i(\lambda-\lambda_i)\bigr)^{2i\eta_i\nu(\lambda_i)}
\chi_i(\lambda)(1-\mathcal X_{\mathcal P}(\lambda)),
&\lambda\in\Sigma_1.
\end{cases}
\nonumber
\end{equation}

\begin{equation}
R_2(\lambda)=
\begin{cases}
\overline{\rho_0(\lambda)}(T_{21})_{\Omega_2}(\lambda),
&\lambda\in\mathbb R,\\[2mm]
\overline{\rho_0(\lambda_i)}T_{21}^{(i)}(\lambda_i)
\bigl(\eta_i(\lambda-\lambda_i)\bigr)^{-2i\eta_i\nu(\lambda_i)}
\chi_i(\lambda)(1-\mathcal X_{\mathcal P}(\lambda)),
&\lambda\in\Sigma_2.
\end{cases}
\nonumber
\end{equation}

\begin{equation}
R_3(\lambda)=
\begin{cases}
\rho_0(\omega^2\lambda)(T_{31})_{\Omega_3}(\lambda),
&\lambda\in\omega\mathbb R,\\[2mm]
\rho_0(\lambda_i)T_{31}^{(i)}(\omega \lambda_i)
\bigl(\eta_i(\omega^2\lambda-\lambda_i)\bigr)^{2i\eta_i\nu(\lambda_i)}
\chi_i(\omega^2\lambda)(1-\mathcal X_{\mathcal P}(\lambda)),
&\lambda\in\Sigma_3.
\end{cases}
\nonumber
\end{equation}

\begin{equation}
R_4(\lambda)=
\begin{cases}
\overline{\rho_0(\omega^2\lambda)}(T_{13})_{\Omega_4}(\lambda),
&\lambda\in\omega\mathbb R,\\[2mm]
\overline{\rho_0(\lambda_i)}T_{13}^{(i)}(\omega \lambda_i)
\bigl(\eta_i(\omega^2\lambda-\lambda_i)\bigr)^{-2i\eta_i\nu(\lambda_i)}
\chi_i(\omega^2\lambda)(1-\mathcal X_{\mathcal P}(\lambda)),
&\lambda\in\Sigma_4.
\end{cases}
\nonumber
\end{equation}

\begin{equation}
R_5(\lambda)=
\begin{cases}
\rho_0(\omega\lambda)(T_{23})_{\Omega_5}(\lambda),
&\lambda\in\omega^2\mathbb R,\\[2mm]
\rho_0(\lambda_i)T_{23}^{(i)}(\omega^2 \lambda_i)
\bigl(\eta_i(\omega\lambda-\lambda_i)\bigr)^{2i\eta_i\nu(\lambda_i)}
\chi_i(\omega\lambda)(1-\mathcal X_{\mathcal P}(\lambda)),
&\lambda\in\Sigma_5.
\end{cases}
\nonumber
\end{equation}

\begin{equation}
R_6(\lambda)=
\begin{cases}
\overline{\rho_0(\omega\lambda)}(T_{32})_{\Omega_6}(\lambda),
&\lambda\in\omega^2\mathbb R,\\[2mm]
\overline{\rho_0(\lambda_i)}T_{32}^{(i)}(\omega^2 \lambda_i)
\bigl(\eta_i(\omega\lambda-\lambda_i)\bigr)^{-2i\eta_i\nu(\lambda_i)}
\chi_i(\omega\lambda)(1-\mathcal X_{\mathcal P}(\lambda)),
&\lambda\in\Sigma_6.
\end{cases}
\nonumber
\end{equation}

The spectral-line values exclude the stationary points. All six
functions vanish at zero and obey \eqref{sec4:scalarreductions}.

For estimates, use the pulled-back real coordinate
$s=\Re\lambda$ for $j=1,2$, $s=\Re(\omega^2\lambda)$ for $j=3,4$,
and $s=\Re(\omega\lambda)$ for $j=5,6$.
At a stationary point $p$ adjacent to the component,
\begin{equation}\label{sec4:innerDPbound}
|\bar\partial R_j(\lambda)|
\leq C\left(|\rho_0'(s)|+|\lambda-p|^{-1/2}
+|\bar\partial\mathcal X_{\mathcal P}(\lambda)|\right).
\end{equation}
\end{lemma}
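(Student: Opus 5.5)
Following the proof of Lemma~\ref{lem:sec4:outerextension}, I will construct $R_1$ on $\Omega_1$ explicitly and obtain $R_2,\dots,R_6$ from the reductions \eqref{sec4:scalarreductions}. Set $R_2(\lambda)=\overline{R_1(\bar\lambda)}$, $R_3(\lambda)=R_1(\omega^2\lambda)$, $R_4(\lambda)=R_2(\omega^2\lambda)$, $R_5(\lambda)=R_1(\omega\lambda)$ and $R_6(\lambda)=R_2(\omega\lambda)$. The identities $\overline{T_{12}(\bar\lambda)}=T_{21}(\lambda)$ and \eqref{sec4:ratiotransport}, together with the frozen values \eqref{sec4:frozenTvalues}, then carry the boundary prescriptions of $R_1$ on $\mathbb R\cup\Sigma_1$ to those of the other five functions. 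Because $\chi_i$ and $\mathcal X_{\mathcal P}$ are orbit-compatible, the cutoffs transport correctly as well. For the estimates, a unimodular rotation changes $\bar\partial$ only by a unimodular factor, so it suffices to treat $R_1$, and by the negation symmetry only the part of $\Omega_1$ near $\lambda_1=a$ (with the point $0$).

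On each component of $\Omega_1$ near $a$, I will use polar coordinates centred at $a$: $\lambda-a=\ell e^{i\phi}$. Let $\chi(\phi)$ be a smooth angular cutoff equal to $1$ on the real axis and $0$ on the lip angle. Define
\begin{equation*}
R_1(\lambda)=\Bigl[\rho_0(\Re\lambda)T_{12}(\lambda)\chi(\phi)+\rho_0(a\pm0)T_{12}^{(1)}(a)(\lambda-a)^{2i\nu_a}\chi_1(\lambda)\bigl(1-\chi(\phi)\bigr)\Bigr](1-\mathcal X_{\mathcal P}(\lambda)).
\end{equation*}
The sign $\pm$ is chosen according to whether the component lies in $|s|>a$ or $|s|<a$. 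Away from $a$ and $-a$, where $\chi_1=0$, this reduces to the construction of Lemma~\ref{lem:sec4:outerextension} with the cutoff $\cos^2(\pi v/2h(s))$ adapted to the lip height $h$. Near the origin I keep that construction as before, so flatness of $\rho_0$ at $0$ gives $R_1(0)=0$. On the lip $\chi(\phi)=0$, so $R_1$ equals the prescribed frozen value. On $\mathbb R$ the factor $1-\mathcal X_{\mathcal P}$ equals $1$, since the lenses avoid the pole disks, so $R_1=\rho_0T_{12}$ there. Continuity and boundedness follow from $|(\lambda-a)^{2i\nu_a}|\le e^{2\pi\nu_a}$, and from the boundedness of $T_{12}$ in the lens, which avoids the divisor of $T$.

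For the $\bar\partial$ bound I will differentiate and group the terms. First, $\bar\partial$ of $\rho_0(\Re\lambda)$ gives $\tfrac12\rho_0'(s)$ times bounded factors. Second, $\bar\partial$ of $\mathcal X_{\mathcal P}$ gives the last term in \eqref{sec4:innerDPbound}. Third, $\bar\partial\chi_1$ is bounded and supported away from $a$. The main term comes from $\bar\partial\chi(\phi)=O(1/|\lambda-a|)$ multiplied by the difference
\begin{equation*}
\rho_0(s)T_{12}(\lambda)-\rho_0(a\pm0)T_{12}^{(1)}(a)(\lambda-a)^{2i\nu_a}=(\lambda-a)^{2i\nu_a}\Bigl[\bigl(\rho_0(s)-\rho_0(a\pm0)\bigr)T_{12}^{(1)}(\lambda)+\rho_0(a\pm0)\bigl(T_{12}^{(1)}(\lambda)-T_{12}^{(1)}(a)\bigr)\Bigr].
\end{equation*}

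The main obstacle is estimating this difference, because $\rho_0$ has different one-sided limits at $a$. For the first piece, I write $\rho_0(s)-\rho_0(a\pm0)=\int\rho_0'$ along the one-sided interval and apply Cauchy--Schwarz, which gives $\|\rho_0'\|_{L^2}|s-a|^{1/2}$. For the second piece, \eqref{sec4:endpointfactor} gives $O(|\lambda-a|\log|\lambda-a|)$, which is $O(|\lambda-a|^{1/2})$ locally. Multiplying by $|\lambda-a|^{-1}$ then yields the term $C|\lambda-p|^{-1/2}$ in \eqref{sec4:innerDPbound}. I will also need to check that $T_{12}^{(1)}$ stays bounded on both sides, since $T$ has a jump on $I(\xi)$ but $\Omega_1$ uses a one-sided boundary value. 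This holds because $\nu$ is H\"older continuous, by Proposition~\ref{prop:reflection-positivity} and the smoothness of $r$.
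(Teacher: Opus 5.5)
Your proposal is correct and is essentially the paper's argument. Both reduce to $R_1$ near $a$, with the other components and $R_2,\dots,R_6$ obtained by reflection and rotation. Both interpolate with a cutoff between the line data and the frozen lip data. Both control $\bar\partial(\text{cutoff})=O(|\lambda-a|^{-1})$ times the difference using $\|\rho_0'\|_{L^2}|s-a|^{1/2}$ and \eqref{sec4:endpointfactor}.

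The only variations are that you keep $T_{12}(\lambda)$ analytic in the lens, whereas the paper uses the regularized boundary data $f(s)$ multiplied by $(z-a)^{2i\nu_a}$. You also split the difference into a $\rho_0$ part and a $T$ part, rather than bounding $f'$ in $L^2$.

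One small repair is needed. The inner lip $v=q_0s(a-s)/a$ is not a ray from $a$, so a purely angular cutoff vanishes on it only inside a small disk about $a$. It must be glued, by a cutoff in $s$ that costs only bounded terms, to the $h$-adapted cutoff. Alternatively, use $\cos^2(\pi v/2h(s))$ throughout, as the paper does, since $h(s)\asymp|s-a|$ near $a$.
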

\begin{proof}
We construct the extension in the components of $\Omega_1$ adjacent
to $a$; the remaining components follow from the reflection and
$\mathbb Z_3$ reductions.

For $s>a$ and $0<s<a$, respectively, define
\begin{equation*}
f_{\rm o}(s)
=\overline{r(s)}T_{12}(s-i0)(s-a)^{-2i\nu_a},
f_{\rm i}(s)
=-\frac{\overline{r(s)}}{d(s)}
T_{12}(s+i0)(s-a+i0)^{-2i\nu_a}.
\end{equation*}
By \eqref{sec4:endpointfactor} and
\eqref{sec4:frozenTvalues},
\begin{equation*}
f_{\rm o}(a)=\frac{\overline{r(a)}}{\tau_a},
\qquad
f_{\rm i}(a)
=-\frac{\overline{r(a)}}{d(a)\tau_a},
\end{equation*}
and their first derivatives belong to $L^2$ locally near $a$.

Let $z=s+iv$ and set
\[
\chi(s,v)
=
\cos^2\left(\frac{\pi v}{2h(s)}\right).
\]
Writing $f=f_{\rm o}$ or $f_{\rm i}$ on the corresponding component,
define
\[
A(z)
=
(z-a)^{2i\nu_a}
\left[
\chi f(s)
+(1-\chi)\chi_a(s)f(a)
\right].
\]
Then set
\begin{equation*}
R_1(z)=
\begin{cases}
A(z)(1-\mathcal X_{\mathcal P}(z)),
&\Re z>0,\\[1mm]
-\overline{A(-\bar z)}
(1-\mathcal X_{\mathcal P}(z)),
&\Re z<0,
\end{cases}
\end{equation*}
and define
\begin{equation*}
R_2(z)=\overline{R_1(\bar z)},
R_3(z)=R_1(\omega^2z),
R_4(z)=R_2(\omega^2z),
R_5(z)=R_1(\omega z),
R_6(z)=R_2(\omega z).
\end{equation*}

Since $\chi=1$ on the spectral lines and $\chi=0$ on the lips,
the stated boundary values follow from
\eqref{sec4:endpointfactor},
\eqref{sec4:frozenTvalues}, and
\eqref{sec4:ratiotransport}. The reflection and cyclic definitions
also give \eqref{sec4:scalarreductions}.

It remains to establish the $\bar\partial$ estimate. Direct
differentiation gives
\[
|\bar\partial\chi|
\leq \frac{C}{h(s)}.
\]
Near $a$,
\[
h(s)\asymp |s-a|,
\qquad
|z-a|\asymp |s-a|,
\qquad
\chi_a(s)=1.
\]
Since $f'\in L^2$ locally,
\[
|f(s)-f(a)|
\leq
\|f'\|_{L^2}|s-a|^{1/2},
\]
and hence
\[
\frac{|f(s)-f(a)|}{h(s)}
\leq
C|z-a|^{-1/2}.
\]
Moreover, \eqref{sec4:endpointfactor} yields
\[
|f'(s)|
\leq
C\left(
|\rho_0'(s)|
+1+|\log|s-a||
\right).
\]
The logarithmic term is absorbed into $|z-a|^{-1/2}$.
After differentiating the cutoff
$1-\mathcal X_{\mathcal P}$, we therefore obtain
\[
|\bar\partial R_j(\lambda)|
\leq
C\left(
|\rho_0'(s)|
+
|\lambda-p|^{-1/2}
+
|\bar\partial\mathcal X_{\mathcal P}(\lambda)|
\right)
\]
in every component adjacent to a stationary point $p$.
Reflection and rotation preserve distances and the size of the
Wirtinger derivative, so the same estimate holds for all
$j=1,\ldots,6$.

Away from the stationary points all coefficients are bounded, while
near zero and infinity the continuous scattering data are flat and
rapidly decreasing. This completes the proof.
\end{proof}

With these six functions, the interior matrix extension is

\begin{equation}
\mathcal R^{(2)}(\lambda)
=
\begin{cases}
\begin{pmatrix}
1&0&0\\R_1(\lambda)e^{-it\theta_{12}(\lambda)}&1&0\\0&0&1
\end{pmatrix},&\lambda\in\Omega_1,
\\[3mm]
\begin{pmatrix}
1&R_2(\lambda)e^{it\theta_{12}(\lambda)}&0\\0&1&0\\0&0&1
\end{pmatrix},&\lambda\in\Omega_2,
\\[3mm]
\begin{pmatrix}
1&0&R_3(\lambda)e^{it\theta_{13}(\lambda)}\\0&1&0\\0&0&1
\end{pmatrix},&\lambda\in\Omega_3,
\\[3mm]
\begin{pmatrix}
1&0&0\\0&1&0\\R_4(\lambda)e^{-it\theta_{13}(\lambda)}&0&1
\end{pmatrix},&\lambda\in\Omega_4,
\\[3mm]
\begin{pmatrix}
1&0&0\\0&1&0\\0&R_5(\lambda)e^{-it\theta_{23}(\lambda)}&1
\end{pmatrix},&\lambda\in\Omega_5,
\\[3mm]
\begin{pmatrix}
1&0&0\\0&1&R_6(\lambda)e^{it\theta_{23}(\lambda)}\\0&0&1
\end{pmatrix},&\lambda\in\Omega_6,
\\[2mm]
I,&\lambda\notin\Omega.
\end{cases}
\nonumber
\end{equation}
The extension obeys \eqref{sec4:Rreductions}.
For every positive integer $L$, at fixed $x,t$,
\begin{equation}\label{sec4:Rnormalizations}
\mathcal R^{(2)}(\lambda)=I+O(|\lambda|^L),\lambda\to0,
\mathcal R^{(2)}(\lambda)=I+O(|\lambda|^{-L}),\lambda\to\infty.
\end{equation}
These assertions hold in the exterior regions as well.

\subsection{A hybrid $\bar\partial$-RH problem and its decomposition}
\label{s:4.3}

Set $\Sigma^{(2)}=\Sigma^{(ju)}\cup\Sigma^{(ci)},\Sigma^{(ju)}=\bigcup_{j=1}^{6}\Sigma_j,$
and introduce
\begin{equation}\label{sec4:M2transformation}
M^{(2)}(x,t,\lambda)
=M^{(1)}(x,t,\lambda)\mathcal R^{(2)}(x,t,\lambda).
\end{equation}
The resulting matrix solves
the following mixed RH problem.
\begin{RHP}\label{RHP:4.1}
Find a $3\times3$ matrix $M^{(2)}(x,t,\lambda)$ such that
\begin{itemize}
\item $M^{(2)}$ has sectionally continuous first partial derivatives in $\mathbb C\setminus\bigl(\Sigma^{(2)}\cup\mathcal P_\Lambda\cup\mathcal S(\xi)\bigr)$ and is meromorphic in
$\mathbb C\setminus\bigl(\overline{\Omega}\cup\Sigma^{(\mathrm{ci})}\bigr)$.
\item Its boundary values satisfy
\begin{equation}
M^{(2)}_+(\lambda)=M^{(2)}_-(\lambda)V^{(2)}(\lambda),
\qquad\lambda\in\Sigma^{(2)}.
\nonumber
\end{equation}
For $-1<\xi<1$, taking $R_j$ from the adjacent lens gives
\begin{equation}\label{sec4:M2jumpexplicit}
V^{(2)}(\lambda)=\begin{cases}
\begin{pmatrix}
1&0&0\\R_1(\lambda)e^{-it\theta_{12}(\lambda)}&1&0\\0&0&1
\end{pmatrix},&\lambda\in\Sigma_1,
\\[3mm]
\begin{pmatrix}
1&R_2(\lambda)e^{it\theta_{12}(\lambda)}&0\\0&1&0\\0&0&1
\end{pmatrix},&\lambda\in\Sigma_2,
\\[3mm]
\begin{pmatrix}
1&0&R_3(\lambda)e^{it\theta_{13}(\lambda)}\\0&1&0\\0&0&1
\end{pmatrix},&\lambda\in\Sigma_3,
\\[3mm]
\begin{pmatrix}
1&0&0\\0&1&0\\R_4(\lambda)e^{-it\theta_{13}(\lambda)}&0&1
\end{pmatrix},&\lambda\in\Sigma_4,
\\[3mm]
\begin{pmatrix}
1&0&0\\0&1&0\\0&R_5(\lambda)e^{-it\theta_{23}(\lambda)}&1
\end{pmatrix},&\lambda\in\Sigma_5,
\\[3mm]
\begin{pmatrix}
1&0&0\\0&1&R_6(\lambda)e^{it\theta_{23}(\lambda)}\\0&0&1
\end{pmatrix},&\lambda\in\Sigma_6,
\\[3mm]
T^{-1}\mathcal GT,&\lambda\in\partial\mathbb D_j\subset D_1\cup D_3\cup D_5,
\\[3mm]
T^{-1}\mathcal G^{-1}T,&\lambda\in\partial\mathbb D_j\subset D_2\cup D_4\cup D_6.
\end{cases}\end{equation}
For $\xi\in(-\infty,-1]\cup[1,\infty)$, the jump is
\begin{equation}\label{sec4:V2exterior}
V^{(2)}(\lambda)=
\begin{cases}
T(\lambda)^{-1}\mathcal G(\lambda)T(\lambda),
&\lambda\in\partial\mathbb D_j\subset D_1\cup D_3\cup D_5,\\[1mm]
T(\lambda)^{-1}\mathcal G(\lambda)^{-1}T(\lambda),
&\lambda\in\partial\mathbb D_j\subset D_2\cup D_4\cup D_6.
\end{cases}
\end{equation}

\item Asymptotic behavior:
\begin{equation}
\begin{aligned}
M^{(2)}(\lambda)&=I+O(\lambda^{-1}),&&\lambda\to\infty,\\
M^{(2)}(\lambda)&=G(x,t)+O(\lambda),&&\lambda\to0,
\end{aligned}
\nonumber
\end{equation}
and the sectorial values are bounded at the stationary vertices.
\item In the open components,
\begin{equation}
\bar\partial M^{(2)}=M^{(2)}W^{(2)},\qquad
W^{(2)}=(\mathcal R^{(2)})^{-1}\bar\partial\mathcal R^{(2)}.
\nonumber
\end{equation}
Explicitly,
\begin{equation}\label{sec4:W2explicit}
W^{(2)}(\lambda)
=
\begin{cases}
\begin{pmatrix}
0&0&0\\\bar\partial R_1(\lambda)e^{-it\theta_{12}(\lambda)}&0&0\\0&0&0
\end{pmatrix},&\lambda\in\Omega_1,
\\[3mm]
\begin{pmatrix}
0&\bar\partial R_2(\lambda)e^{it\theta_{12}(\lambda)}&0\\0&0&0\\0&0&0
\end{pmatrix},&\lambda\in\Omega_2,
\\[3mm]
\begin{pmatrix}
0&0&\bar\partial R_3(\lambda)e^{it\theta_{13}(\lambda)}\\0&0&0\\0&0&0
\end{pmatrix},&\lambda\in\Omega_3,
\\[3mm]
\begin{pmatrix}
0&0&0\\0&0&0\\\bar\partial R_4(\lambda)e^{-it\theta_{13}(\lambda)}&0&0
\end{pmatrix},&\lambda\in\Omega_4,
\\[3mm]
\begin{pmatrix}
0&0&0\\0&0&0\\0&\bar\partial R_5(\lambda)e^{-it\theta_{23}(\lambda)}&0
\end{pmatrix},&\lambda\in\Omega_5,
\\[3mm]
\begin{pmatrix}
0&0&0\\0&0&\bar\partial R_6(\lambda)e^{it\theta_{23}(\lambda)}\\0&0&0
\end{pmatrix},&\lambda\in\Omega_6,
\\[2mm]
0,&\lambda\notin\Omega.
\end{cases}
\end{equation}
\item The only poles are the retained simple poles, with
\begin{equation}\label{sec4:M2residue}
\operatorname*{Res}_{\lambda=\zeta_j}M^{(2)}(\lambda)
=\lim_{\lambda\to\zeta_j}M^{(2)}(\lambda)
\bigl(T(\lambda)^{-1}B_jT(\lambda)\bigr),
\end{equation}

for every $\zeta_j\in\mathcal P_\Lambda$.
\end{itemize}
\end{RHP}

Each lens factor has one nilpotent off-diagonal part, so
\begin{equation}\label{sec4:nilpotentW}
(\mathcal R^{(2)})^{-1}\bar\partial\mathcal R^{(2)}
=\bar\partial\mathcal R^{(2)}.
\end{equation}
The prescribed boundary values give
\begin{equation}\label{sec4:raycancellation}
(\mathcal R^{(2)}_-)^{-1}V^{(1)}\mathcal R^{(2)}_+=I,
\end{equation}
on the original spectral lines. The extension is the identity near
the pole disks, and multiplication by $(\mathcal R^{(2)})^{-1}$
reverses the deformation.
Writing $z=\lambda,\omega^2\lambda,\omega\lambda$ for $j=1,2$,
$j=3,4$, and $j=5,6$, respectively, we have
\begin{equation}\label{sec4:W2decay}
\|W^{(2)}(\lambda)\|
\leq C|\bar\partial R_j(\lambda)|
 e^{-t|\Im\theta_{12}(z)|},\qquad\lambda\in\Omega_j.
\end{equation}

We separate the analytic and $\bar\partial$ parts by
\begin{equation}\label{sec4:M3definition}
M^{(2)}(\lambda)=M^{(3)}(\lambda)M^R(\lambda),
\end{equation}
where $M^R$ solves the following RH problem and $M^{(3)}$ is treated
in Section~\ref{s:dbar-contribution}.

\begin{RHP}\label{RHP:4.2}
Find a $3\times3$ matrix $M^R(x,t,\lambda)$ such that
\begin{itemize}
\item $M^R$ is analytic in
$\mathbb C\setminus(\Sigma^{(2)}\cup\mathcal P_\Lambda)$
and is bounded at zero and the stationary vertices.
\item $M^R(x,t,\lambda)$ satisfies the same jump condition
\begin{equation}
M^R_+(\lambda)=M^R_-(\lambda)V^{(2)}(\lambda),
\qquad\lambda\in\Sigma^{(2)},
\nonumber
\end{equation}
with the jump matrices in \eqref{sec4:V2exterior} and
\eqref{sec4:M2jumpexplicit}.
\item Asymptotic behavior:
\begin{equation}
M^R(\lambda)=I+O(\lambda^{-1}).
\nonumber
\end{equation}

\item Its only poles are the retained simple poles, with
\begin{equation}\label{sec4:MRresidue}
\operatorname*{Res}_{\lambda=\zeta_j}M^R(\lambda)
=\lim_{\lambda\to\zeta_j}M^R(\lambda)
\bigl(T(\lambda)^{-1}B_jT(\lambda)\bigr),
\end{equation}
for $\zeta_j\in\mathcal P_\Lambda$.
\end{itemize}
\end{RHP}

If a solution exists, the nilpotent residue factors and normalization
give $\det M^R=1$; the quotient argument then proves uniqueness.
The same argument applied to the symmetry transforms gives
\begin{equation}\label{sec4:MRreductions}
M^R(\omega\lambda)=\mathcal A M^R(\lambda)\mathcal A^{-1},
M^R(\lambda)=\mathcal B\overline{M^R(\bar\lambda)}\mathcal B^{-1},
M^R(-\lambda)=M^R(\lambda)^{-T}.
\end{equation}
The discrete model $M^r$ in RH problem~\ref{RHP:4.3} below is related
to modified reflectionless data as follows.

\begin{lemma}
\label{lem:sec4:reflectionless}
Write $H=H_dH_c$, where
\begin{equation*}
H_c(\lambda)=\exp\left\{-\frac1{2\pi i}\int_{I(\xi)}
\frac{\log d(s)}{s-\lambda}\,ds\right\},\qquad H_d=H/H_c.
\end{equation*}
Form $T_c,T_d$ from these scalar factors using the diagonal ratios in
\eqref{THdef}. Then $T=T_dT_c$, and the transformation
\begin{equation}\label{sec4:restorepoles}
\widehat M=M^rT^{-1}\mathcal G^{-1}TT_d^{-1}
\end{equation}
converts RH problem~\ref{RHP:4.3} into the reflectionless problem with
the complete pole set $\mathcal P$ and residue matrices
\begin{equation}\label{sec4:continuousresidues}
\widehat B_p=T_c(p)^{-1}B_pT_c(p).
\end{equation}
The independent norming constants become
\begin{equation}\label{sec4:modifiednorming}
\widetilde c_n=c_n\frac{(T_c)_2(\zeta_n)}{(T_c)_1(\zeta_n)},
\qquad n=1,\ldots,N.
\end{equation}
The transformation is invertible, so these two discrete problems have
the same solvability set.
\end{lemma}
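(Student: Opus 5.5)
The plan is to check directly that $\widehat M$ solves the reflectionless problem and that the map $M^r\mapsto\widehat M$ is reversible. RH problem~\ref{RHP:4.3} is taken to be the pure discrete model: poles only at $\mathcal P_\Lambda$, residue matrices $T^{-1}B_jT$, no continuous jumps, and the removed orbits encoded through the circle jumps $T^{-1}\mathcal G^{\pm1}T$ on $\Sigma^{(ci)}$. The first step is bookkeeping. $H\mapsto T$ is multiplicative in the scalar factor, since $T_i$ is a ratio of values of $H$, so $H=H_dH_c$ gives $T=T_dT_c$ with diagonal, hence commuting, factors. The discrete factor $H_d$ is rational, and $T_d$ is analytic and invertible off $\mathcal P_{\widetilde\Delta}$. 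The continuous factor $T_c$ is analytic off $I(\xi)$, hence in a neighbourhood of every pole. Both factors inherit \eqref{Treductions}, $\det=1$, and the normalizations at $0$ and $\infty$.

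Next I simplify \eqref{sec4:restorepoles}. Inside each disk $\mathbb D_j$ I write $T^{-1}\mathcal G^{-1}TT_d^{-1}=T_c^{-1}\bigl(T_d^{-1}\mathcal G^{-1}\bigr)$ using commutativity of the diagonal factors. Outside the disks, $\mathcal G=I$, so $\widehat M=M^rT_c^{-1}T_d^{-1}\cdot T_d\,T_d^{-1}$. Here some care is needed: I expect the intended formula to reduce to $\widehat M=M^rT_c^{-1}\cdot(\text{identity outside disks})$ after multiplying by $T_c$ on the appropriate side. I would first verify whether the correct right factor is $T^{-1}\mathcal G^{-1}T\,T_c^{-1}$. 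Following the logic $M^{(1)}=M\mathcal GT$, undoing the $T_d$ part alone yields data conjugated by $T_c$. I will adopt whichever reading makes $\widehat M\to I$ at infinity and jump-free off the disks, and state it explicitly.

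The main step is local, in three cases. On $\partial\mathbb D_j$ the jump of $M^r$ is $T^{-1}\mathcal G^{\pm1}T$. Multiplying by the right factor, which has the same boundary values from both sides because $\mathcal G$ is discontinuous only across the circle, cancels this jump exactly, so $\widehat M$ has no jump on $\Sigma^{(ci)}$.
\begin{itemize}
\item[(i)] For $j\in\widetilde\nabla$, $\mathcal G^{-1}=I+B_j/(\lambda-\zeta_j)$, so inside $\mathbb D_j$ the product $M^r\,T^{-1}\mathcal G^{-1}T\cdots$ acquires a simple pole at $\zeta_j$. Nilpotency $B_j^2=0$ gives the residue condition with matrix $T^{-1}B_jT$; stripping $T_d$ leaves $T_c^{-1}B_jT_c$.
\item[(ii)] For $n\in\Delta$, $\mathcal G$ is the reciprocal triangular factor, and $T_d$ has a simple zero or pole at $\zeta_n$. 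The standard computation shows that $M\,(I-\frac{\lambda-\zeta_n}{\beta_n}E_{21})$ turns an $E_{12}$-residue into an $E_{21}$-residue with coefficient $1/\beta_n$. Reversing this, and cancelling the $T_d$ divisor, restores a simple pole with residue matrix $T_c^{-1}B_nT_c$. I check that the would-be pole of $T_d^{-1}$ is removable, by the $\Delta$ choice of divisor in \eqref{THdef}.
\item[(iii)] For $j\in\widetilde\Lambda$, the residue $T^{-1}B_jT$ becomes $T_c^{-1}B_jT_c$ after conjugation by the analytic invertible $T_d$.
\end{itemize}
The other eleven orbit points follow from the reductions, since $\mathcal G$, $T_c$, $T_d$ and $M^r$ share them.

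Finally, $B_n=\beta_nE_{12}$ and $T_c$ is diagonal, so $T_c(\zeta_n)^{-1}E_{12}T_c(\zeta_n)=\frac{(T_c)_2}{(T_c)_1}(\zeta_n)E_{12}$, which gives \eqref{sec4:modifiednorming}. All steps are multiplications by explicit matrices that are invertible off finite sets, with removable singularities, so the inverse map reconstructs $M^r$ from $\widehat M$. Solvability sets therefore coincide, with uniqueness via the $\det=1$ Liouville argument. The main obstacle is case (ii): keeping the signs, the $\omega$ Jacobian factors, and the $E_{12}/E_{23}$ pole types consistent across all twelve orbit points. I would handle it by doing $\zeta_n$ and $-\zeta_n$ by hand and transporting the result with \eqref{residueZ3new}.
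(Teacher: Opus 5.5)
Your outline (cancel the circle jumps, treat retained, decaying and growing poles separately, transport by the reductions, invert) matches the paper's. However, two steps are wrong as written, and the second is the heart of the lemma.

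\textbf{The multiplier is mis-simplified.} Since $T=T_dT_c$ with commuting diagonal factors, $TT_d^{-1}=T_c$, and therefore
\[
T^{-1}\mathcal G^{-1}TT_d^{-1}=T_c^{-1}\bigl(T_d^{-1}\mathcal G^{-1}\bigr)T_c .
\]
It is not $T_c^{-1}(T_d^{-1}\mathcal G^{-1})$, and where $\mathcal G=I$ it equals $T_d^{-1}$. So off the disks $\widehat M=M^rT_d^{-1}$, not $M^rT_c^{-1}T_d^{-1}$. This has no jumps there because $T_d$ is rational, and it tends to $I$ at infinity. The formula in the statement is therefore already the right one.

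The alternative $T^{-1}\mathcal G^{-1}TT_c^{-1}$ that you float would give $\widehat M=M^rT_c^{-1}$ off the disks. That function inherits the jump of $T_c$ across $I(\xi)$ and its rotations, and it conjugates residues by $T_d$ instead of $T_c$, so it is not reflectionless.

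Relatedly, the multiplier does not have equal boundary values on the two sides of $\partial\mathbb D_j$; it is precisely its jump that cancels $V^r$. For example, on a counterclockwise circle $\widehat M_+=M^r_-(T^{-1}\mathcal GT)(T^{-1}\mathcal G^{-1}T)T_d^{-1}=\widehat M_-$.

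\textbf{Case (ii) is not established, and the mechanism you describe is not the right one.} The pole of $T_d^{-1}$ at a growing point is not removable: it is exactly what, together with $\mathcal G^{-1}$, rebuilds the pole. Nor does $M(I-\frac{\lambda-\zeta_n}{\beta_n}E_{21})$ carry an $E_{21}$-residue with coefficient $1/\beta_n$. It has a pole in column $2$ and a zero in column $1$, which the divisor of $T$ cancels, so $M\mathcal GT$ is analytic at $\zeta_n$.

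What is needed is the paper's local factorization. Set $z=\lambda-p$ and $B_p=b_pE_{ij}$, so that $\mathcal G^{-1}=I+zE_{ji}/b_p$ in $\mathbb D_p$. Check that the divisor in \eqref{THdef} is oriented so that $(T_d)_i=a_i/z+O(1)$ and $(T_d)_j=a_jz+O(z^2)$. At $\zeta_n$ this means $(T_d)_1=H_d(\omega^2\lambda)/H_d(\lambda)$ has a pole and $(T_d)_2=H_d(\lambda)/H_d(\omega\lambda)$ a zero; this uses $\arg\zeta_n\neq\pi/6$. Then
\[
K_p=T_d^{-1}\Bigl(I+\frac{z}{b_p}E_{ji}\Bigr)\Bigl(I-\frac{b_p}{z}E_{ij}\Bigr)
\]
is analytic at $p$, with $(i,j)$-block $\begin{pmatrix}0&-b_p/a_i\\ 1/(a_jb_p)&0\end{pmatrix}$ at $z=0$, so $K_p(p)$ is invertible.

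Consequently $\widehat MT_c^{-1}=(M^rT_c^{-1}K_p)(I+B_p/z)$ satisfies the residue condition with $B_p$. Conjugation by the analytic $T_c$, using $B_pDB_p=0$ for diagonal $D$, then gives $T_c(p)^{-1}B_pT_c(p)$.

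Conversely, $M^r=(\widehat MT_c^{-1})(I-B_p/z)K_p^{-1}T_c$ is analytic at $p$ exactly because $K_p(p)$ is invertible. This, not a generic appeal to removable singularities, is what yields the equality of solvability sets.
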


\begin{proof}
Across each pole circle, the right multiplier in
\eqref{sec4:restorepoles} cancels exactly the jump in \eqref{sec4:Vr}.
Away from the disks it is $T_d^{-1}$ and has no continuous-spectrum
jump. At a retained pole its conjugation changes
$T^{-1}B_pT$ into $T_c^{-1}B_pT_c$.
At a decaying pole, $\mathcal G^{-1}=I+B_p/(\lambda-p)$ restores
that pole.

For a growing pole let $z=\lambda-p$ and $B_p=b_pE_{ij}$.
The $i$th entry of $T_d$ has a simple pole, the $j$th entry a simple
zero, and $\mathcal G^{-1}=I+zE_{ji}/b_p$.
Direct multiplication shows that
\begin{equation*}
K_p=T_d^{-1}\left(I+\frac{z}{b_p}E_{ji}\right)
\left(I-\frac{b_p}{z}E_{ij}\right)
\end{equation*}
is analytic and invertible at $p$.
If $(T_d)_i=a_i/z+O(1)$ and $(T_d)_j=a_jz+O(z^2)$, its $(i,j)$ block
at $p$ is
\begin{equation*}
\begin{pmatrix}0&-b_p/a_i\\1/(a_jb_p)&0\end{pmatrix},
\end{equation*}
whose determinant is nonzero. Thus each complete twelve-point growing
orbit is restored with its prescribed positions. Analytic diagonal conjugation by
$T_c$ gives \eqref{sec4:continuousresidues}; its derivative introduces
no additional residue term since $B_pDB_p=0$ for every diagonal $D$.

The reductions of $T_c$ yield, at
$\zeta_n^\sharp=-\omega^2\overline{\zeta_n}$,
\begin{equation*}
\frac{(T_c)_3(\zeta_n^\sharp)}{(T_c)_2(\zeta_n^\sharp)}
=\overline{\frac{(T_c)_2(\zeta_n)}{(T_c)_1(\zeta_n)}},\qquad
\widetilde d_n=\omega^2\overline{\widetilde c_n}.
\end{equation*}
Hence the full orbit and all matrix reductions are preserved.
The inverse transformation is
$M^r=\widehat M T_dT^{-1}\mathcal GT$.
This is an equivalence of the specified problems and does not impose
solvability on arbitrary formal norming data.
\end{proof}

In the interior region choose
\begin{equation}\label{sec4:localneighborhoods}
U=\bigcup_{p\in\mathcal S(\xi)}D(p,\varrho_0),
\end{equation}
with disjoint disks avoiding zero and the pole disks, small enough
that the endpoint cutoffs equal one on their adjacent lips. On the
compact lip support outside $U$ the phase has a strict decay sign, so
\begin{equation}\label{sec4:lipdecay}
\|V^{(2)}-I\|_{L^1\cap L^2\cap L^\infty(\Sigma^{(ju)}\setminus U)}
\leq Ce^{-ct}.
\end{equation}
This leads to the comparison
\begin{equation}\label{sec4:purefactorization}
M^R(\lambda)=
\begin{cases}
E(\lambda)M^r(\lambda),&\lambda\notin U,\\[1mm]
E(\lambda)M^r(\lambda)M^{lo}(\lambda),&\lambda\in U,
\end{cases}
\end{equation}
where the local factors $M^{lo}$ and error $E$ are constructed in
Section~\ref{s:jump-contribution}, while $M^r$ solves the following
discrete problem.

\begin{RHP}
\label{RHP:4.3}
Find a $3\times3$ matrix-valued function
$M^r(\lambda):=M^r(x,t,\lambda)$ such that
\begin{itemize}
\item
$M^r$ is analytic in
$\mathbb C\setminus(\Sigma^{(ci)}\cup\mathcal P_\Lambda)$,
with only the retained simple poles.
\item
$M^r(\lambda)=I+O(\lambda^{-1})$ as $\lambda\to\infty$ and is bounded
at zero.
\item $M^r(\lambda):=M^r(x,t,\lambda)$ satisfies the jump condition
\begin{equation}
M^r_+(\lambda)=M^r_-(\lambda)V^r(\lambda),
\qquad\lambda\in\Sigma^{(ci)},
\nonumber
\end{equation}
where
\begin{equation}\label{sec4:Vr}
V^r(\lambda)=
\begin{cases}
T^{-1}\mathcal GT,&\lambda\in\partial\mathbb D_j\subset D_1\cup D_3\cup D_5,\\
T^{-1}\mathcal G^{-1}T,&\lambda\in\partial\mathbb D_j\subset D_2\cup D_4\cup D_6.
\end{cases}
\end{equation}
\item
At each $\zeta_j\in\mathcal P_\Lambda$, the residue condition is
\eqref{sec4:M2residue}, with $M^r$ in place of $M^{(2)}$.
It has the three reductions in \eqref{sec4:MRreductions}.
\end{itemize}
\end{RHP}

\begin{remark}\label{rem:sec4:exterior-model}
In the exterior regions there are no stationary points and the lip
jumps are the identity. Thus $U=\varnothing$, $M^R=M^r$, and $E=I$.
\end{remark}

\section{Contribution from the discrete spectrum}
\label{s:discrete-contribution}

We compare $M^r$ with the reflectionless model supported on
$\mathcal P_\Lambda$. The pole-circle jumps are exponentially small;
the retained residues determine the leading discrete contribution.

\subsection{$M^r(\lambda)$ and the reflectionless soliton model}
\label{sec5:retainedmodel}

Throughout this subsection we fix $\xi$ and assume that the modified
retained data $\widetilde{\mathcal D}_\Lambda(\xi)$ are
ray-admissible in the sense of Definition~\ref{def:ray-admissible}. Fix $\xi$ with $|\xi|\ne1$ and use the index sets
\eqref{discretesplitting}. Shrink the pole disks if necessary. Their boundaries
are separated from zero, the retained poles, and the spectral lines.
The circle jumps in \eqref{sec4:Vr} satisfy
\begin{equation}\label{polecirclesmallnorm}
\|V^r-I\|_{L^q(\Sigma^{(ci)})}
\leq C_{\xi} e^{-\delta_0t},\qquad 1\leq q\leq\infty.
\end{equation}
Indeed, for $n\in\nabla$, the coefficient $\beta_n$ has modulus at
most $|c_n|e^{-\delta_0t}$. For $n\in\Delta$, the reciprocal
coefficient has modulus at most $|c_n|^{-1}e^{-\delta_0t}$.
All orbit multipliers have modulus one. On the circles the factors
$T,T^{-1}$ and $|\lambda-p|^{\pm1}$ are uniformly bounded.
The finite total contour length proves every stated $L^q$ bound.
Thus replacing $V^r$ by $I$ gives the following retained RH problem.

Let $N(\Lambda):=|\Lambda|$ count the retained complete twelve-point
orbits. Introduce the modified retained data
\begin{equation}
\widetilde{\mathcal D}^{\Lambda}(\xi)
:=\widetilde{\mathcal D}_\Lambda(\xi)
=\{(\zeta_n,\widetilde c_n^\Lambda):n\in\Lambda\},
\nonumber
\end{equation}
where
\begin{equation}\label{modifiednormingconstant}
\widetilde c_n^\Lambda
=c_n\frac{T_2(\zeta_n)}{T_1(\zeta_n)}
=\widetilde c_n\frac{(T_d)_2(\zeta_n)}{(T_d)_1(\zeta_n)},
\qquad n\in\Lambda.
\end{equation}
Here $\widetilde c_n$ is the continuous-spectrum modification in
\eqref{sec4:modifiednorming}. The additional $T_d$ ratio records the
removed growing orbits; the two modifications are generally different.
At fixed $\xi$ both ratios are independent of $x,t$.

\begin{RHP}
\label{RHP:modified-reflectionless}
Find a $3\times3$ matrix $M^\Lambda(x,t,\lambda)$ with these properties.
\begin{itemize}
\item
$M^\Lambda$ is meromorphic in $\mathbb C$, with at most simple poles
at the points of $\mathcal P_\Lambda$ and no other singularities.
\item
It has no jumps and is normalized by
\begin{equation*}
M^\Lambda(\lambda)=I+O(\lambda^{-1}),\qquad\lambda\to\infty.
\end{equation*}
\item At each $\zeta_j\in\mathcal P_\Lambda$, the residue condition is
\eqref{sec4:MRresidue}, with $M^\Lambda$ in place of $M^R$.
\end{itemize}
\end{RHP}
Denote the retained residue matrices by
\begin{equation}
\widetilde B_p^\Lambda:=T(p)^{-1}B_pT(p),
\qquad p\in\mathcal P_\Lambda.
\nonumber
\end{equation}
The solution is analytic at zero, with
$M^\Lambda(\lambda)=M^\Lambda(0)+O(\lambda)$; its value there
is determined by the retained data. For $\Lambda=\varnothing$,
$M^\Lambda=I$, and the empty finite determinant is one.
Whenever a solution exists, it obeys
\begin{equation}\label{sec5:modelreductions}
M^\Lambda(\omega\lambda)=\mathcal A M^\Lambda(\lambda)\mathcal A^{-1},
M^\Lambda(\lambda)=\mathcal B\overline{M^\Lambda(\bar\lambda)}\mathcal B^{-1},
M^\Lambda(-\lambda)=M^\Lambda(\lambda)^{-T}.
\end{equation}

For a nonempty retained set, assume $c_n\ne0$ for every retained
orbit, as required for genuine simple poles. An explicit finite system determines
whether this model exists. For each $p\in\mathcal P_\Lambda$, read
$i_p,j_p$ and the scalar $b_p$ from
\eqref{firstsixresiduematricesnew}--\eqref{secondsixresiduematricesnew},
so that $B_p=b_p e_{i_p}e_{j_p}^T$ and $i_p\ne j_p$. Then
$\widetilde B_p^\Lambda=\widetilde b_p e_{i_p}e_{j_p}^T$, where
$\widetilde b_p=b_pT_{j_pi_p}(p)$.
Here $e_1,e_2,e_3$ are the standard coordinate columns.
Define the $12|\Lambda|\times12|\Lambda|$ matrices
\begin{equation}\label{sec5:finitecoefficientmatrix}
(\mathsf C_\Lambda)_{pq}=
\begin{cases}
\dfrac{\delta_{j_q,i_p}}{p-q},&p\ne q,\\[2mm]
0,&p=q,
\end{cases}
\qquad
\mathsf A_\Lambda=I-\mathsf C_\Lambda D_b,
\quad D_b=\operatorname{diag}(\widetilde b_p).
\end{equation}

\begin{lemma}
\label{lem:sec5:finitecriterion}
Fix $\xi$ and suppose that the modified retained data
$\widetilde{\mathcal D}_\Lambda(\xi)$ are ray-admissible. Then RH problem
\ref{RHP:modified-reflectionless} has a unique solution for all
$t\ge t_\xi$, and
\[
\sup_{\substack{t\ge t_\xi\\
\lambda\in\Sigma^{(ci)}}}
\|M^\Lambda(\lambda)\|
\|(M^\Lambda(\lambda))^{-1}\|
<\infty.
\]
Furthermore, we have
\begin{equation}
\label{sec5:multiOrbitExpansion}
M^\Lambda(\lambda)
=
I
+
\sum_{n\in\Lambda}
\sum_{\ell=0}^{2}
\omega^\ell\mathcal A^\ell
\biggl(
\frac{\boldsymbol u_n e_2^T}
{\lambda-\omega^\ell\zeta_n}
+
\frac{\mathcal B\overline{\boldsymbol u_n}e_1^T}
{\lambda-\omega^\ell\bar\zeta_n}
+
\frac{\boldsymbol v_n e_1^T}
{\lambda+\omega^\ell\zeta_n}
+
\frac{\mathcal B\overline{\boldsymbol v_n}e_2^T}
{\lambda+\omega^\ell\bar\zeta_n}
\biggr)
\mathcal A^{-\ell},
\end{equation}
where
\begin{equation}
\label{sec5:multiOrbitVectors}
\boldsymbol u_n
=
\begin{pmatrix}
\alpha_n\\
\beta_n\\
\gamma_n
\end{pmatrix},
\qquad
\boldsymbol v_n
=
\begin{pmatrix}
\widehat\alpha_n\\
\widehat\beta_n\\
\widehat\gamma_n
\end{pmatrix},
\qquad n\in\Lambda,
\end{equation}
\begin{equation}
\label{sec5:multiOrbitSeedSystem}
\begin{aligned}
\boldsymbol u_n
={}&
b_n e_1
+
b_n\sum_{m\in\Lambda}
\biggl(
\frac{\omega\mathcal A\boldsymbol u_m}
{\zeta_n-\omega\zeta_m}
+
\frac{\mathcal B\overline{\boldsymbol u_m}}
{\zeta_n-\bar\zeta_m}
+
\frac{\boldsymbol v_m}
{\zeta_n+\zeta_m}
+
\frac{\omega\mathcal A\mathcal B
\overline{\boldsymbol v_m}}
{\zeta_n+\omega\bar\zeta_m}
\biggr),
\\[3mm]
\boldsymbol v_n
={}&
b_n e_2
-
b_n\sum_{m\in\Lambda}
\biggl(
\frac{\boldsymbol u_m}
{\zeta_n+\zeta_m}
+
\frac{\omega^2\mathcal A^2\mathcal B
\overline{\boldsymbol u_m}}
{\zeta_n+\omega^2\bar\zeta_m}
+
\frac{\omega^2\mathcal A^2\boldsymbol v_m}
{\zeta_n-\omega^2\zeta_m}
+
\frac{\mathcal B\overline{\boldsymbol v_m}}
{\zeta_n-\bar\zeta_m}
\biggr).
\end{aligned}
\end{equation}
\end{lemma}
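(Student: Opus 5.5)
The plan is to reduce RH problem \ref{RHP:modified-reflectionless} to a finite linear system. We then read off solvability, the uniform bound and the orbit expansion from the matrix $\mathsf A_\Lambda$ of \eqref{sec5:finitecoefficientmatrix}.

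\textbf{Step 1: partial fractions.} Since $M^\Lambda$ is meromorphic on $\mathbb C$, has only simple poles on $\mathcal P_\Lambda$, and tends to $I$ at infinity, Liouville's theorem gives
\[
M^\Lambda(\lambda)=I+\sum_{p\in\mathcal P_\Lambda}\frac{\operatorname{Res}_p M^\Lambda}{\lambda-p}.
\]
The residue condition $\operatorname{Res}_pM^\Lambda=\lim_{\lambda\to p}M^\Lambda(\lambda)\widetilde B^\Lambda_p$ involves $\widetilde B^\Lambda_p=\widetilde b_pe_{i_p}e_{j_p}^T$ with $i_p\ne j_p$. Because this matrix is nilpotent, the polar part of $M^\Lambda$ at $p$ drops out of the limit: only its $i_p$-th column enters, and that column is regular at $p$. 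Hence $\operatorname{Res}_pM^\Lambda=\widetilde b_p\,\boldsymbol w_pe_{j_p}^T$, where $\boldsymbol w_p$ is the $i_p$-th column of the regular part of $M^\Lambda$ at $p$.

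\textbf{Step 2: the linear system.} Evaluating the $i_p$-th column of the partial-fraction formula at $p$ gives
\[
\boldsymbol w_p=e_{i_p}+\sum_{q\ne p}\frac{\delta_{j_q,i_p}}{p-q}\,\widetilde b_q\boldsymbol w_q .
\]
This is exactly $\mathsf A_\Lambda W=E$, taken columnwise in $\mathbb C^3$. Ray-admissibility says $\mathsf A_\Lambda^{-1}$ exists and is uniformly bounded for $t\ge t_\xi$. This yields existence: the constructed function satisfies the residue conditions by design. Uniqueness follows from the standard argument. Both $\det M^\Lambda$ and $M^\Lambda_1(M^\Lambda_2)^{-1}$ are entire, since nilpotent residues cancel, and both tend to the identity at infinity.

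\textbf{Step 3: uniform bounds.} The retained data have $|\widetilde b_p|\le|c_n|e^{t\delta_0}\sup|T_{j_pi_p}|$, which looks dangerous. However, the relevant combination is $D_b\mathsf A_\Lambda^{-1}$, and we bound it through the equivalent normalized system. We rescale by $\widetilde b_p$, which is of order one on $\Lambda$ when $|\Im\theta_{12}(\zeta_n)|\le\delta_0$. More precisely, for $\xi$ fixed and $\delta_0$ taken small, we may shrink $\delta_0$ so that retained orbits satisfy $\xi=v_n$ exactly. Then $|\widetilde b_p|$ is bounded above and below and the residues are $O(1)$. Hence $M^\Lambda$ is bounded on $\Sigma^{(ci)}$, which lies at fixed distance from $\mathcal P_\Lambda$. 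The bound on $(M^\Lambda)^{-1}$ follows from $\det M^\Lambda=1$ together with the cofactor formula.

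\textbf{Step 4: orbit expansion.} Finally, uniqueness and the symmetry argument give \eqref{sec5:modelreductions}. Using \eqref{residueZ3new} we express all twelve residue vectors of an orbit through the two seed vectors $\boldsymbol u_n$ (the residue at $\zeta_n$, type $e_2^T$) and $\boldsymbol v_n$ (the residue at $-\zeta_n$, type $e_1^T$). This rearranges the partial-fraction sum into \eqref{sec5:multiOrbitExpansion}. Substituting the expansion into the seed equations at $\zeta_n$ and $-\zeta_n$ gives \eqref{sec5:multiOrbitSeedSystem}, with the $b_n$ there read as the modified coefficient.

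\textbf{Main obstacle.} The expected main difficulty is the bookkeeping in this last step. We must check that the Jacobian factors $\omega^\ell$ and the conjugations by $\mathcal A$ and $\mathcal B$ match each pole type, and in particular that the $E_{23}$-type pole at $\zeta_n^\sharp$ is consistently generated from $\boldsymbol v_n$. I would verify this term by term, using the residue tables \eqref{firstsixresiduematricesnew}--\eqref{secondsixresiduematricesnew}.
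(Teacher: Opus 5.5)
Your argument is correct and follows the route the paper intends. The paper records no separate proof of this lemma, but the matrix $\mathsf A_\Lambda=I-\mathsf C_\Lambda D_b$ of \eqref{sec5:finitecoefficientmatrix} is exactly your column system, and your symmetry bookkeeping (with $b_n$ read as $\widetilde b_n=\widetilde b_{n+6N}$) reproduces \eqref{sec5:multiOrbitExpansion}--\eqref{sec5:multiOrbitSeedSystem}. Step 3 simplifies: the choice of $\delta_0$ in \eqref{discretesplitting} already forces $\Lambda=\{n:v_n=\xi\}$, so along the ray $\Im\theta_{12}(\zeta_n)=0$ and $|\widetilde b_p|=|c_n|\,|T_{j_pi_p}(p)|$, with no shrinking or rescaling needed; uniqueness also follows directly from the invertibility of $\mathsf A_\Lambda$.
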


For a fixed value of the data parameter $\xi$, its reconstruction is
\begin{equation}\label{sec5:retainedreconstruction}
q^\Lambda(x,t;\xi)
=\boldsymbol\ell M^\Lambda(x,t,0)e_3,
\qquad u^\Lambda(x,t;\xi)=\log q^\Lambda(x,t;\xi).
\end{equation}
The real logarithm is defined wherever $q^\Lambda>0$. Interpreting
this function as an exact reflectionless Tzitz\'{e}ica field also
requires admissibility of its inverse-scattering reconstruction.

\begin{corollary}
\label{prop:sec5:soliton-interpretation} Along $x=\xi t$, $t\geq t_{\xi}$, the unique solution
$M^\Lambda(x,t,\lambda)$ of RH problem~\ref{RHP:modified-reflectionless} is the
reflectionless $N(\Lambda)$-soliton model associated with the modified
retained data $\widetilde{\mathcal D}^{\Lambda}(\xi)$ and their complete
twelve-point residue orbits. Write
\begin{equation}\label{sec5:solitonmodel}
\begin{aligned}
M_{\mathrm{sol}}^\Lambda(x,t,\lambda;\xi)
&:=M^\Lambda(x,t,\lambda)\\
&=M^{\mathrm{sol}}(x,t,\lambda\mid
\widetilde{\mathcal D}^{\Lambda}(\xi)).
\end{aligned}
\end{equation}
Its reconstruction is denoted by
\begin{equation}\label{sec5:solitonreconstruction}
\begin{aligned}
q_{\mathrm{sol}}^\Lambda(x,t;\xi)
&:=(\omega,\omega^2,1) M_{\mathrm{sol}}^\Lambda(x,t,0;\xi)e_3
=q^\Lambda(x,t;\xi),\\
u_{\mathrm{sol}}^\Lambda(x,t;\xi)
&:=\log \left[
(\omega,\omega^2,1)
M_{\mathrm{sol}}^\Lambda(x,t,0;\xi)
\right]_{13}=u^\Lambda(x,t;\xi),
\end{aligned}
\end{equation}
where the real logarithm is taken wherever $q^\Lambda>0$. Under the hypotheses of Theorem~\ref{thm:soliton-resolution}.
For $\Lambda=\varnothing$, the model is $I$, with
$q_{\mathrm{sol}}^\varnothing=1$ and $u_{\mathrm{sol}}^\varnothing=0$.
\end{corollary}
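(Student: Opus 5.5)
The plan is to identify $M^\Lambda$ with the reflectionless solution built from the modified data. Then $q^\Lambda$ is read off by the reconstruction formula \eqref{TzitzeicaReconstructionFormula}.

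First, Lemma~\ref{lem:sec5:finitecriterion} gives, for $t\ge t_\xi$, a unique solution of RH problem~\ref{RHP:modified-reflectionless} together with the partial-fraction form \eqref{sec5:multiOrbitExpansion}. Next consider the reflectionless RH problem~\ref{RHP:basic-meromorphic} with $r\equiv0$, pole set $\mathcal P_\Lambda$ and norming constants $\widetilde c_n^\Lambda$ from \eqref{modifiednormingconstant}. Its residue matrices are $-\widetilde c_n^\Lambda e^{\vartheta_{12}(\zeta_n)}E_{12}$ at $\zeta_n$, and the other eleven orbit members are generated by \eqref{residueZ3new}.

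I will then check that these coincide with $\widetilde B_p^\Lambda=T(p)^{-1}B_pT(p)$. At $p=\zeta_n$ this is the identity $T_{21}(\zeta_n)\beta_n=-\widetilde c_n^\Lambda e^{\vartheta_{12}(\zeta_n)}$. At the remaining eleven points I transport this identity with the reductions \eqref{Treductions} of $T$, exactly as in the proof of Lemma~\ref{lem:sec4:reflectionless}. That lemma already verified $\widetilde d_n=\omega^2\overline{\widetilde c_n}$ at $\zeta_n^\sharp$, and the extra factor $T_d$ obeys the same reductions. Hence the modified data define a genuine twelve-point orbit family of the required form.

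Both problems have no jumps, are normalized to $I$ at infinity, and have identical residue conditions. Both are therefore solutions of the same finite linear system \eqref{sec5:multiOrbitSeedSystem}, whose coefficient matrix $\mathsf A_\Lambda$ is invertible by ray-admissibility. Uniqueness then gives $M^\Lambda=M^{\mathrm{sol}}(\,\cdot\mid\widetilde{\mathcal D}^\Lambda(\xi))$, which is \eqref{sec5:solitonmodel}. Alternatively, uniqueness follows from the Liouville quotient argument, using $\det M^\Lambda\equiv1$ from the nilpotent residues.

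For the reconstruction, note that $M^\Lambda$ is analytic at zero because $\mathcal P_\Lambda$ avoids a disk around the origin. Evaluating $\boldsymbol\ell M^\Lambda(0)e_3$ gives $q^\Lambda$ by definition \eqref{sec5:retainedreconstruction}, so the two formulas in \eqref{sec5:solitonreconstruction} agree wherever $q^\Lambda>0$. The empty case is immediate: with no poles, Liouville gives $M^\Lambda=I$, and then $\boldsymbol\ell e_3=1$, so $u=0$.

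The main obstacle is the word ``soliton'': showing that the reconstruction is an actual $N(\Lambda)$-soliton of \eqref{TZI}, not merely a solution of the RH problem. I would use the standard dressing argument. The function $\Psi=M^\Lambda e^{\sum(\cdot)}$ (with the diagonal exponential) satisfies the Lax equations whose coefficients are computed from the expansions of $M^\Lambda$ at $\infty$ and at $0$. The symmetries \eqref{sec5:modelreductions} force these coefficients into the form $L,Z$ with $u=\log q^\Lambda$.

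I expect the delicate part to be the behaviour at $0$. One must show that $M^\Lambda(0)$ has the structure \eqref{G}, which means establishing $\boldsymbol\ell M^\Lambda(0)=q^\Lambda\boldsymbol\ell$. I would derive this from the cyclic symmetry evaluated at $\lambda=0$, together with the cofactor symmetry. Where this identification requires further admissibility, I would state it as part of the hypotheses of Theorem~\ref{thm:soliton-resolution}, as the corollary's wording indicates.
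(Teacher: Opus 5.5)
Your proposal is correct and follows essentially the route the paper relies on. The paper gives no separate proof: the corollary follows directly from Lemma~\ref{lem:sec5:finitecriterion} (unique solvability along the ray under ray-admissibility), from the identification of $T(p)^{-1}B_pT(p)$ with the twelve-point orbit generated by $\widetilde c_n^\Lambda=c_nT_{21}(\zeta_n)$ through the reductions \eqref{Treductions}, and from the definition \eqref{sec5:retainedreconstruction}, with $M^\Lambda=I$ when $\Lambda=\varnothing$.

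The paper does not carry out the dressing argument or the analysis of $M^\Lambda(0)$ that you sketch. Your sketch would work: cyclic symmetry makes $M^\Lambda(0)$ circulant with $\boldsymbol\ell M^\Lambda(0)=q^\Lambda\boldsymbol\ell$, and the cofactor symmetry together with $\det M^\Lambda=1$ fixes the remaining eigenvalues as $1$ and $1/q^\Lambda$. The paper instead does what your fallback does, treating the interpretation as a genuine Tzitz\'eica field as part of the admissibility hypotheses of Theorem~\ref{thm:soliton-resolution}.
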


\subsection{Residual error between $M^r(\lambda)$ and
$M^\Lambda(\lambda)$}
\label{sec5:discreteerror}

Where both discrete models exist, set
\begin{equation}
M^{\mathrm{err}}(\lambda)
=M^r(\lambda)(M^\Lambda(\lambda))^{-1}.
\nonumber
\end{equation}
Their identical local residue factors cancel, giving the following
normalized pure-jump problem. Conversely, its solution constructs
$M^r=M^{\mathrm{err}}M^\Lambda$.

\begin{RHP}
\label{RHP:sec5:error}
Find a $3\times3$ matrix $M^{\mathrm{err}}$ such that
\begin{itemize}
\item
$M^{\mathrm{err}}$ is analytic in $\mathbb C\setminus\Sigma^{(ci)}$, has continuous
boundary values on each circle, and has no poles.
\item
 $M^{\mathrm{err}}(\lambda)=I+O(\lambda^{-1})$ at infinity.
\item $M^{\mathrm{err}}$ satisfies the following jump condition
\begin{equation}
M^{\mathrm{err}}_+=M^{\mathrm{err}}_-V_{\mathrm{err}},
\qquad
V_{\mathrm{err}}=M^\Lambda V^r(M^\Lambda)^{-1},
\quad \lambda\in\Sigma^{(ci)}.
\nonumber
\end{equation}
Here $V^r$ is given by \eqref{sec4:Vr}.
\end{itemize}
\end{RHP}
\begin{equation}
\|V_{\mathrm{err}}-I\|_{L^1\cap L^2\cap L^\infty(\Sigma^{(ci)})}
\leq C_{\xi} e^{-\delta_0t}.
\nonumber
\end{equation}
Let $w_{\mathrm{err}}=V_{\mathrm{err}}-I$ and define the Cauchy
transform with the stated contour orientations by
\begin{equation}
C f(\lambda)=\frac1{2\pi i}\int_{\Sigma^{(ci)}}
\frac{f(s)}{s-\lambda}\,ds,
\qquad C_+-C_-=I.
\nonumber
\end{equation}
The solution has the representation
\begin{equation}\label{sec5:errorrepresentation}
M^{\mathrm{err}}(\lambda)
=I+C(\mu_{\mathrm{err}}w_{\mathrm{err}})(\lambda),
\end{equation}
where $\mu_{\mathrm{err}}-I\in L^2(\Sigma^{(ci)})$ solves
\begin{equation}
(I-C_{\mathrm{err}})(\mu_{\mathrm{err}}-I)
=C_{\mathrm{err}}I,
\qquad C_{\mathrm{err}}f=C_-(fw_{\mathrm{err}}).
\nonumber
\end{equation}
On these finitely many separated circles $C_-$ is uniformly bounded
on $L^2$. Hence
\begin{equation}\label{sec5:erroroperatorbound}
\|C_{\mathrm{err}}\|_{L^2\to L^2}
\leq C_{\xi} e^{-\delta_0t}<\tfrac12
\end{equation}
for all sufficiently large $t$ at the fixed $\xi$. The Neumann series
therefore gives
\begin{equation}
\|(I-C_{\mathrm{err}})^{-1}\|_{L^2\to L^2}\leq2,
\qquad
\|\mu_{\mathrm{err}}-I\|_{L^2}\leq C_{\xi} e^{-\delta_0t}.
\nonumber
\end{equation}
For each fixed parameter, the jump matrix extends analytically and
invertibly to an annular neighborhood of each circle. Gluing the
two Cauchy boundary functions with this extension gives analytic
continuation across the circle, and hence the stated continuous traces.
Determinant one and uniqueness follow
by Liouville's theorem. Uniqueness and the inherited jump reductions
give the three matrix reductions for $M^{\mathrm{err}}$ and $M^r$.

At the reconstruction point write
\begin{equation}
M^{\mathrm{err}}(\lambda)
=M^{\mathrm{err}}(0)+\lambda M^{\mathrm{err}}_1+O(\lambda^2).
\nonumber
\end{equation}
The Cauchy representation gives the explicit coefficients
\begin{equation}\label{sec5:errorzerointegrals}
M^{\mathrm{err}}(0)
=I+\frac1{2\pi i}\int_{\Sigma^{(ci)}}
\frac{\mu_{\mathrm{err}}(s)w_{\mathrm{err}}(s)}{s}\,ds,
M^{\mathrm{err}}_1
=\frac1{2\pi i}\int_{\Sigma^{(ci)}}
\frac{\mu_{\mathrm{err}}(s)w_{\mathrm{err}}(s)}{s^2}\,ds.
\end{equation}

\begin{lemma}
\label{lem:sec5:discreteerror}
RH problem
\ref{RHP:sec5:error} and consequently RH problem~\ref{RHP:4.3},
is uniquely solvable for all sufficiently large $t$ at the fixed $\xi$.
Uniformly a fixed positive distance from $\Sigma^{(ci)}$,
\begin{equation}\label{discreteerrorestimate}
M^{\mathrm{err}}(\lambda)
=I+O\left(\frac{e^{-\delta_0t}}{1+|\lambda|}\right).
\end{equation}
At zero,
\begin{equation}\label{discreteerrorzero}
\|M^{\mathrm{err}}(0)-I\|+\|M^{\mathrm{err}}_1\|
\leq C_{\xi} e^{-\delta_0t}.
\end{equation}
\end{lemma}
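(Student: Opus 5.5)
The plan is the standard small-norm argument, with all inputs already in place above the statement. There are three steps.

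\textbf{Solvability.} I first use Lemma~\ref{lem:sec5:finitecriterion}. For $t\ge t_\xi$ it gives existence of $M^\Lambda$ and a uniform bound on $\|M^\Lambda\|\,\|(M^\Lambda)^{-1}\|$ on $\Sigma^{(ci)}$. Together with \eqref{polecirclesmallnorm} this yields
\[
\|V_{\mathrm{err}}-I\|_{L^1\cap L^2\cap L^\infty(\Sigma^{(ci)})}\le C_\xi e^{-\delta_0 t}.
\]
Since the circles are finitely many, of fixed radius and separated, $C_-$ is bounded on $L^2(\Sigma^{(ci)})$. This gives \eqref{sec5:erroroperatorbound}, and for $t$ large the Neumann series inverts $I-C_{\mathrm{err}}$.

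I then define $M^{\mathrm{err}}$ by \eqref{sec5:errorrepresentation}. Uniqueness follows from $\det M^{\mathrm{err}}\equiv1$ and Liouville applied to the quotient of two solutions. The jump extends analytically across each circle, so the quotient has no genuine jump there.

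Solvability of RH problem~\ref{RHP:4.3} follows by setting $M^r=M^{\mathrm{err}}M^\Lambda$. This product has the jump $V^r$ on the circles. Because $M^{\mathrm{err}}$ is analytic at $\mathcal P_\Lambda$ (the disks around retained poles are not in $\Sigma^{(ci)}$), it also carries the residue conditions of $M^\Lambda$. Uniqueness of $M^r$ uses the same quotient argument: $M^r_1(M^r_2)^{-1}$ has removable singularities at the retained poles, because both factors share the nilpotent residue structure. This is the usual computation with $B_pDB_p=0$.

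\textbf{Pointwise estimate \eqref{discreteerrorestimate}.} Fix $\lambda$ with $\operatorname{dist}(\lambda,\Sigma^{(ci)})\ge\delta>0$ and write
\[
M^{\mathrm{err}}(\lambda)-I=\frac1{2\pi i}\int_{\Sigma^{(ci)}}\frac{\bigl(I+(\mu_{\mathrm{err}}-I)\bigr)w_{\mathrm{err}}}{s-\lambda}\,ds.
\]
Cauchy--Schwarz bounds this by
\[
\sup_{s\in\Sigma^{(ci)}}|s-\lambda|^{-1}\Bigl(\|w_{\mathrm{err}}\|_{L^1}+\|\mu_{\mathrm{err}}-I\|_{L^2}\|w_{\mathrm{err}}\|_{L^2}\Bigr).
\]
The contour is compact, so $|s-\lambda|^{-1}\le C/(1+|\lambda|)$. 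Both products are $O(e^{-\delta_0 t})$, which proves \eqref{discreteerrorestimate}.

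\textbf{Estimate at zero \eqref{discreteerrorzero}.} The circles avoid $0$ by at least $\varrho$, since $\varrho\le\tfrac14\min|p|$ and the disks are only shrunk. Hence $|s|^{-1}$ and $|s|^{-2}$ are bounded on $\Sigma^{(ci)}$. Applying the same estimate to the integrals in \eqref{sec5:errorzerointegrals} gives \eqref{discreteerrorzero}.

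The reductions are inherited by uniqueness: each symmetry transform of $M^{\mathrm{err}}$ solves the same problem, since $V^r$ and $M^\Lambda$ satisfy \eqref{sec5:modelreductions} and the circle orientations are permuted consistently.

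The main obstacle is the uniform conditioning of $M^\Lambda$ on the circles, since the $e^{-\delta_0 t}$ smallness of $V_{\mathrm{err}}-I$ must survive conjugation by $M^\Lambda$. This is exactly what ray-admissibility supplies via Lemma~\ref{lem:sec5:finitecriterion}. A secondary point is checking that $t_\xi$ and the threshold making $C_\xi e^{-\delta_0t}<\tfrac12$ depend only on $\xi$ and the data.
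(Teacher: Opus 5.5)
Your proposal is correct and follows essentially the same route as the paper. The paper also inverts $I-C_{\mathrm{err}}$ by a Neumann series, using the $e^{-\delta_0 t}$ smallness of $V_{\mathrm{err}}-I$; this smallness relies on the uniform conditioning of $M^\Lambda$ on $\Sigma^{(ci)}$ from Lemma~\ref{lem:sec5:finitecriterion}. It then bounds $\|\mu_{\mathrm{err}}w_{\mathrm{err}}\|_{L^1}\le\|w_{\mathrm{err}}\|_{L^1}+\|\mu_{\mathrm{err}}-I\|_{L^2}\|w_{\mathrm{err}}\|_{L^2}$ and uses that the kernels $(s-\lambda)^{-1}$, $s^{-1}$, $s^{-2}$ are bounded on the circles. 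You are more explicit than the paper about passing solvability and uniqueness to RH problem~\ref{RHP:4.3} via $M^r=M^{\mathrm{err}}M^\Lambda$ and the nilpotent-residue quotient argument, but the paper relies on the same argument in the surrounding text.
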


\begin{proof}
Existence follows from \eqref{sec5:erroroperatorbound}. The density satisfies
\begin{equation*}
\|\mu_{\mathrm{err}}w_{\mathrm{err}}\|_{L^1}
\leq\|w_{\mathrm{err}}\|_{L^1}
+\|\mu_{\mathrm{err}}-I\|_{L^2}\|w_{\mathrm{err}}\|_{L^2}
\leq C_{\xi} e^{-\delta_0t}.
\end{equation*}
Insert this into \eqref{sec5:errorrepresentation}. Off the circles
the kernel is bounded, and at infinity it is $O(|\lambda|^{-1})$.
Since the circles stay a positive distance from zero, the kernels
$s^{-1}$ and $s^{-2}$ in \eqref{sec5:errorzerointegrals} are uniformly
bounded as well. These observations prove both estimates.
\end{proof}

\begin{proposition}
\label{prop:sec5:discretecomparison}
\begin{equation}
M^r(\lambda)
=\left[I+O\left(\frac{e^{-\delta_0t}}{1+|\lambda|}\right)\right]
M^\Lambda(\lambda)
\nonumber
\end{equation}
uniformly a fixed positive distance from the circle contour and
away from the retained poles.
If also $M^\Lambda(0)$ is uniformly bounded, the full restored model
$\widehat M$ in \eqref{sec4:restorepoles} satisfies
\begin{equation}\label{sec5:reconstructedcomparison}
\boldsymbol\ell\widehat M(0)e_3-q^\Lambda=O(e^{-\delta_0t}).
\end{equation}
On physical parameter sets where these reconstruction scalars are
positive and $q^\Lambda\geq c_{\xi}>0$, their real logarithms differ by
$O(e^{-\delta_0t})$.
\end{proposition}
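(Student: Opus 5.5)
The first assertion follows by writing $M^r=M^{\mathrm{err}}M^\Lambda$ and inserting the estimate \eqref{discreteerrorestimate} of Lemma~\ref{lem:sec5:discreteerror}. Both $M^r$ and $M^{\mathrm{err}}M^\Lambda$ solve RH problem~\ref{RHP:4.3}: the jumps agree by construction of $V_{\mathrm{err}}$, and the residue conditions agree because $M^{\mathrm{err}}$ is analytic at the retained poles. Uniqueness of RH problem~\ref{RHP:4.3}, which holds for large $t$ by that same lemma, then identifies them. Hence, at a fixed distance from $\Sigma^{(ci)}$ and away from $\mathcal P_\Lambda$,
\[
M^r(\lambda)=\Bigl[I+O\bigl(e^{-\delta_0t}(1+|\lambda|)^{-1}\bigr)\Bigr]M^\Lambda(\lambda).
\]
The $O(\cdot)$ acts as a left factor, so no bound on $M^\Lambda$ is needed at this stage.

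For \eqref{sec5:reconstructedcomparison} I would evaluate \eqref{sec4:restorepoles} at $\lambda=0$. The origin lies outside all pole disks, so $\mathcal G(0)=I$ and $T(0)=I$. The scalar $H_d$ built from the Blaschke factors satisfies $H_d(0)=1$, since the factor $\frac{(\lambda-\zeta_n)(\lambda+\bar\zeta_n)}{(\lambda-\bar\zeta_n)(\lambda+\zeta_n)}$ equals $1$ at $0$. Therefore $T_d(0)=I$ and $\widehat M(0)=M^r(0)=M^{\mathrm{err}}(0)M^\Lambda(0)$. Using \eqref{discreteerrorzero} we get
\[
\boldsymbol\ell\widehat M(0)e_3-q^\Lambda=\boldsymbol\ell\bigl(M^{\mathrm{err}}(0)-I\bigr)M^\Lambda(0)e_3 .
\]
This is bounded by $C_\xi e^{-\delta_0 t}\,\|M^\Lambda(0)\|$, which is $O(e^{-\delta_0t})$ under the assumed uniform bound on $M^\Lambda(0)$. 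The one step to check carefully is that $0$ is at distance at least $\varrho$ from the circles. This holds because $\varrho\le\frac14\min|p|$, so the kernels in \eqref{sec5:errorzerointegrals} are bounded, exactly as used in Lemma~\ref{lem:sec5:discreteerror}.

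For the logarithms, suppose both scalars are positive and $q^\Lambda\ge c_\xi$. For large $t$ the difference is $\le c_\xi/2$, so the other scalar is also $\ge c_\xi/2$. The mean value theorem for $\log$ on $[c_\xi/2,\infty)$ then bounds $|\log a-\log b|\le (2/c_\xi)|a-b|=O(e^{-\delta_0t})$. The main subtlety is justifying $\widehat M(0)=M^r(0)$, which rests on the normalization $T_d(0)=I$ and $\mathcal G(0)=I$. Everything else follows from the small-norm estimates already established.
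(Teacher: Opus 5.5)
Your proposal is correct and follows the paper's proof. You factor $M^r=M^{\mathrm{err}}M^\Lambda$ with the left multiplier controlled by Lemma~\ref{lem:sec5:discreteerror}, use $T(0)=T_d(0)=I$ and $\mathcal G=I$ near zero to get $\widehat M(0)=M^r(0)$, then apply \eqref{discreteerrorzero} and the mean value theorem for $\log$. Your extra checks (the uniqueness identification, $H_d(0)=1$, and the distance from $0$ to the circles) agree with the paper and spell out steps it uses implicitly.
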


\begin{proof}
The first assertion follows from
$M^r=M^{\mathrm{err}}M^\Lambda$ and
\eqref{discreteerrorestimate}; the multiplier is on the left.
The normalization $T_d(0)=T_c(0)=I$ and the fact that $\mathcal G=I$
near zero give $\widehat M(0)=M^r(0)$.
Thus \eqref{discreteerrorzero} proves
\eqref{sec5:reconstructedcomparison}. On the stated positive set,
the scalar estimate and the mean value theorem for $\log$ give the
last assertion.
\end{proof}

\section{Contribution from the jump contours}
\label{s:jump-contribution}

We construct the local models at the six stationary points and
compute their contribution to \eqref{sec4:purefactorization}.

\subsection{Local model near phase points}

For fixed $-1<\xi<1$, use the disks $U_p$ defined in
\eqref{sec4:localneighborhoods}. Denote the single-point factors by
$M^{lo}_{i,\ell}$ at $\omega^\ell\lambda_i$, where $\lambda_1=a$,
$\lambda_2=-a$, $i=1,2$ and $\ell=0,1,2$. Put
\begin{equation}
\Sigma^{lo}=\Sigma^{(ju)}\cap U,
\qquad V^{lo}=V^{(2)}\big|_{\Sigma^{lo}}.
\nonumber
\end{equation}
All lip orientations remain those of Section~\ref{s:4}.

The local contour is shown in Figure~\ref{fig:sec6:localcontour}.
\begin{figure}[H]
\centering

\begin{tikzpicture}[
    >=stealth,
    scale=0.78,
    localcross/.style={
        black,
        line width=1.0pt,
        postaction={decorate},
        decoration={
            markings,
            mark=at position 0.58 with {\arrow{stealth}}
        }
    },
    every node/.style={
        font=\small
    }
]

\def\rr{3.20}      
\def\rin{2.42}     
\def\rout{3.98}    
\def\hh{0.34}      
\def\Rline{4.55}   

\draw[
    densely dotted,
    black!35,
    line width=0.45pt
]
(0,0) circle (\rr);

\foreach \ang in {0,60,120,180,240,300}{
    \begin{scope}[rotate=\ang]

        \draw[
            densely dashed,
            black!30,
            line width=0.45pt
        ]
        (0,0)--(\Rline,0);


        \draw[localcross]
        (\rr,0)--(\rin,\hh);

        \draw[localcross]
        (\rin,-\hh)--(\rr,0);

        \draw[localcross]
        (\rout,\hh)--(\rr,0);

        \draw[localcross]
        (\rr,0)--(\rout,-\hh);

        \fill[black]
        (\rr,0) circle (1.7pt);

    \end{scope}
}

\fill[black] (0,0) circle (1.3pt);

\node[
    anchor=north,
    inner sep=0pt,
    yshift=-5pt
]
at (0,0)
{$0$};


\node[
    anchor=north,
    inner sep=0pt,
    yshift=-12pt
]
at (3.20,0)
{$\lambda_1=a$};

\node[
    anchor=north,
    inner sep=0pt,
    yshift=-12pt
]
at (-3.20,0)
{$\lambda_2=-a$};

\node[
    anchor=south east,
    inner sep=0pt,
    xshift=-7pt,
    yshift=8pt
]
at (120:3.20)
{$\omega\lambda_1$};

\node[
    anchor=south west,
    inner sep=0pt,
    xshift=7pt,
    yshift=8pt
]
at (60:3.20)
{$\omega^2\lambda_2$};

\node[
    anchor=north east,
    inner sep=0pt,
    xshift=-7pt,
    yshift=-8pt
]
at (240:3.20)
{$\omega^2\lambda_1$};

\node[
    anchor=north west,
    inner sep=0pt,
    xshift=7pt,
    yshift=-8pt
]
at (300:3.20)
{$\omega\lambda_2$};


\node[
    anchor=west,
    inner sep=0pt,
    xshift=4pt
]
at (\Rline,0)
{$\mathbb R$};

\node[
    anchor=south,
    inner sep=0pt,
    yshift=5pt
]
at (120:\Rline)
{$\omega\mathbb R$};

\node[
    anchor=south,
    inner sep=0pt,
    yshift=5pt
]
at (60:\Rline)
{$\omega^2\mathbb R$};

\node[
    anchor=west,
    inner sep=0pt,
    font=\footnotesize
]
at (3.08,1.78)
{$|\lambda|=a$};

\end{tikzpicture}

\caption{
The local jump contour $\Sigma^{lo}$ at the six stationary
points $\omega^\ell\lambda_i$, $i=1,2$, $\ell=0,1,2$, for
$-1<\xi<1$, where $\lambda_1=a$, $\lambda_2=-a$, and
$a=\sqrt{(1-\xi)/(1+\xi)}$.
The solid lines form the local crosses and inherit the orientations
of the lens contours in Section~\ref{s:4}.
The dashed spectral lines and the dotted circle $|\lambda|=a$
are shown only for reference and carry no jump of the local problem.
}
\label{fig:sec6:localcontour}

\end{figure}

\begin{RHP}\label{RHP:sec6:local}
Find a $3\times3$ matrix $M^{lo}(\lambda)$, separately in each disk $U_p$, such that
\begin{itemize}
\item $M^{lo}(\lambda)$ is analytic in $U\setminus\Sigma^{lo}$.
\item $M^{lo}(\lambda)$ satisfies the jump condition
\begin{equation}
M^{lo}_+(\lambda)=M^{lo}_-(\lambda)V^{lo}(\lambda),\qquad \lambda\in\Sigma^{lo}.
\nonumber
\end{equation}
\item Asymptotic behavior:
$M^{lo}(\lambda)=I+O(\lambda^{-1})$.
\end{itemize}
\end{RHP}
The cross construction and analytic normalization below fix $M^{lo}$
with the reductions \eqref{sec4:MRreductions}.
Put $\Gamma_p=\Sigma^{lo}\cap U_p$ and extend
$w_p=(V^{lo}-I)|_{\Gamma_p}$ by zero to $\Sigma^{lo}$.
Nilpotency gives, on $\Gamma_p$,
\begin{equation}
V^{lo}=(I-w_p)^{-1}(I+0),\qquad
C_{w_p}f=C_+(fw_p),
\nonumber
\end{equation}
where $C_+-C_-=I$ on $\Sigma^{lo}$. Set
\begin{equation}
w=\sum_{p\in\mathcal S(\xi)}w_p,\qquad
C_w=\sum_{p\in\mathcal S(\xi)}C_{w_p}.
\nonumber
\end{equation}

\begin{lemma}\label{lem:sec6:localweights}
For fixed $\xi$, uniformly in $p\in\mathcal S(\xi)$,
\begin{equation}
\|w_p\|_{L^1}=O(t^{-1/2}),\qquad
\|w_p\|_{L^2}=O(t^{-1/4}),\qquad
\|w_p\|_{L^\infty}=O(1).
\nonumber
\end{equation}
The same bounds hold for $w$ on $\Sigma^{lo}$.
\end{lemma}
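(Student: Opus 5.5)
By the $\mathbb Z_3$, reflection and cofactor reductions \eqref{sec4:scalarreductions} and \eqref{sec4:Rreductions}, it suffices to treat a single stationary point, say $p=\lambda_1=a$, on the two lips $\Sigma_1\cap U_a$ and $\Sigma_2\cap U_a$. The other five points follow because rotations and reflections preserve arclength, the modulus of the jump entries and the phase decay. Since $\mathcal S(\xi)$ is finite, the bounds for $w$ on $\Sigma^{lo}$ then follow by summation.

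On $\Sigma^{lo}\cap U_a$ the endpoint cutoff $\chi_1$ equals one and $\mathcal X_{\mathcal P}$ vanishes, by the choice of $\varrho_0$ in \eqref{sec4:localneighborhoods}. Hence, by Lemma~\ref{lem:sec4:innerextension}, the only nonzero entry of $w_a$ is
\[
\rho_0(a\pm0)\,T_{12}^{(1)}(a)\,(\lambda-a)^{\pm 2i\nu_a}\,e^{\mp it\theta_{12}(\lambda)},
\]
or its conjugate analogue. The plan is as follows.

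First, bound the prefactor uniformly. We have $|T_{12}^{(1)}(a)|=|\tau_a|^{-1}=1$ by \eqref{sec4:frozenTvalues}, and $|\rho_0(a\pm0)|\le |r(a)|/d_*$ by Proposition~\ref{prop:reflection-positivity}. The power $(\lambda-a)^{\pm2i\nu_a}$ has modulus $e^{\mp2\nu_a\arg(\lambda-a)}$. The argument is bounded on the lips, and $\nu_a\le -\frac1{2\pi}\log d_*$, so this factor is $O(1)$. This already gives $\|w_a\|_{L^\infty}=O(1)$.

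Second, obtain a quadratic Gaussian bound for the exponential. Each lip is a straight segment $\lambda=a+\ell e^{i\phi}$, with $0<\ell<\varrho_0$ and $\phi$ one of the four fixed angles with $\tan|\phi|=q_0$ or its reflection. On such a segment $v=\pm\ell\sin\phi$ and $|s-a|=\ell|\cos\phi|$. The sign choices \eqref{sec4:innerphasesigns} ensure that the exponential on each lip is the decaying one. Then \eqref{sec4:quadraticdecay} gives
\[
|e^{\mp it\theta_{12}(\lambda)}|=e^{-t|\Im\theta_{12}(\lambda)|}\le e^{-ct\ell^2},
\]
with $c$ depending only on $\xi$ and $q_0$.

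Third, integrate along the lips. This gives
\[
\|w_a\|_{L^1}\le C\int_0^\infty e^{-ct\ell^2}\,d\ell=O(t^{-1/2}),
\qquad
\|w_a\|_{L^2}^2\le C\int_0^\infty e^{-2ct\ell^2}\,d\ell=O(t^{-1/2}),
\]
and the second bound yields $\|w_a\|_{L^2}=O(t^{-1/4})$.

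The only delicate point is step two. Inequality \eqref{sec4:quadraticdecay} must hold with a uniform constant on the whole lip inside $U_a$, not just infinitesimally near $a$. I would verify it directly from the factored form
\[
\Im\theta_{12}=\frac{\sqrt3(1+\xi)}{2|\lambda|^2}\,v\,(|\lambda|^2-a^2).
\]
For $\varrho_0<a/2$ one has $|\lambda|\asymp a$, and $\bigl||\lambda|^2-a^2\bigr|\ge c\,|s-a|$ because $|s-a|\ge \ell|\cos\phi|$ dominates $v^2=O(\ell^2)$ for small $\ell$. The sign of $|\lambda|^2-a^2$ matches the sign of $s-a$ on each lip, and this is exactly what makes the chosen exponential decay. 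Multiplying by $|v|\asymp\ell$ yields the bound $c\ell^2$.
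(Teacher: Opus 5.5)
Your proposal is correct and follows essentially the paper's argument. The bounded frozen endpoint factors, together with the quadratic phase estimate \eqref{sec4:quadraticdecay}, give a pointwise bound $|w_p|\le Ce^{-ct|\lambda-p|^2}$ on the lips, which is then integrated in arclength; your explicit check of \eqref{sec4:quadraticdecay} from the factored form of $\Im\theta_{12}$ fills in a step the paper only asserts.

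One small correction: the inner lips over $(0,a)$ are the parabolic arcs $v=\pm q_0 s(a-s)/a$ from \eqref{sec4:innerdomains}, not straight segments. Near $a$ they still satisfy $|v|\asymp|s-a|\asymp|\lambda-a|$, with arclength comparable to $d|\lambda-a|$, so your estimates are unaffected.
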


\begin{proof}
On each lip the bounded endpoint factors and quadratic phase give
$|w_p(s)|\leq C e^{-ct|s-p|^2}$. Arclength is uniformly comparable
to $du=d|s-p|$, hence
\begin{equation*}
\|w_p\|_{L^q}^q\leq C_q\int_0^{\varrho_0}e^{-qctu^2}\,du
\leq C_qt^{-1/2},\qquad q=1,2.
\end{equation*}
The pointwise bound also controls $L^\infty$, and the six disjoint
supports give the estimates for $w$.
\end{proof}

\begin{lemma}
\label{lem:sec6:localinteraction}
For $p,q\in\mathcal S(\xi)$ with $p\ne q$,
\begin{equation}\label{sec6:mixedCauchyoperators}
\|C_{w_p}C_{w_q}\|_{L^2(\Sigma^{lo})\to L^2(\Sigma^{lo})}
\leq C_{\xi}\|w_p\|_2\|w_q\|_2=O(t^{-1/2}).
\end{equation}
Let $a_p$ be uniformly bounded matrix functions on $\Gamma_p$, and put
\begin{equation}
F_p(\lambda)=\frac{1}{2\pi i}\int_{\Gamma_p}
\frac{a_p(s)w_p(s)}{s-\lambda}\,ds.
\nonumber
\end{equation}
Then, uniformly for distinct $p,q$,
\begin{equation}\label{sec6:separatedCauchyfields}
\|F_p\|_{L^\infty(\Gamma_q)}=O(t^{-1/2}),\qquad
\frac{1}{2\pi i}\int_{\Gamma_p}
\frac{F_q(s)a_p(s)w_p(s)}{s-\lambda}\,ds=O(t^{-1}),
\end{equation}
where the last estimate holds uniformly on compact sets at a fixed
positive distance from $\Sigma^{lo}$.
\end{lemma}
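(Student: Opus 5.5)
The whole argument rests on one geometric fact: the six local crosses $\Gamma_p$ lie in pairwise disjoint disks $U_p$ of fixed radius $\varrho_0$, so for $p\ne q$ we have $\operatorname{dist}(\Gamma_p,\Gamma_q)\geq \delta_\xi>0$ uniformly in $t$. On these separated supports the Cauchy kernel $1/(s-\lambda)$ is smooth and bounded. The $t$-dependence then enters only through the norms of $w_p$ in Lemma~\ref{lem:sec6:localweights}.

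\textbf{Step 1: the operator bound \eqref{sec6:mixedCauchyoperators}.} For $f\in L^2(\Sigma^{lo})$, write
\begin{equation*}
C_{w_p}C_{w_q}f=C_+\bigl[(C_+(fw_q))\,w_p\bigr].
\end{equation*}
The inner function $g=C_+(fw_q)$ is only needed on $\Gamma_p$, where $w_p$ is supported. For $\lambda\in\Gamma_p$,
\begin{equation*}
g(\lambda)=\frac1{2\pi i}\int_{\Gamma_q}\frac{f(s)w_q(s)}{s-\lambda}\,ds,
\end{equation*}
and the kernel is bounded by $\delta_\xi^{-1}$. Hence $|g(\lambda)|\leq C\|f\|_2\|w_q\|_2$ by Cauchy--Schwarz. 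It follows that $\|g w_p\|_{L^2}\leq C\|f\|_2\|w_q\|_2\|w_p\|_2$. The outer $C_+$ is bounded on $L^2$ of the finite union of rays, with a bound independent of $t$ because the contour is fixed at fixed $\xi$. Lemma~\ref{lem:sec6:localweights} gives $\|w_p\|_2\|w_q\|_2=O(t^{-1/2})$. One also needs the $L^2$ boundedness of $C_\pm$ on $\Sigma^{lo}$, a finite union of line segments; this is the standard fact for Lipschitz contours.

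\textbf{Step 2: the first estimate in \eqref{sec6:separatedCauchyfields}.} For $\lambda\in\Gamma_q$,
\begin{equation*}
|F_p(\lambda)|\leq (2\pi\delta_\xi)^{-1}\|a_p\|_\infty\|w_p\|_{L^1}=O(t^{-1/2}),
\end{equation*}
using the $L^1$ bound of Lemma~\ref{lem:sec6:localweights}.

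\textbf{Step 3: the double integral in \eqref{sec6:separatedCauchyfields}.} Let $K$ be compact at distance $d>0$ from $\Sigma^{lo}$. For $\lambda\in K$, the kernel $|s-\lambda|^{-1}\leq d^{-1}$ on $\Gamma_p$. Combining this with Step 2 (with $q$ in place of $p$) gives
\begin{equation*}
\Bigl|\frac1{2\pi i}\int_{\Gamma_p}\frac{F_q a_p w_p}{s-\lambda}\,ds\Bigr|\leq C d^{-1}\|F_q\|_{L^\infty(\Gamma_p)}\|a_p\|_\infty\|w_p\|_{L^1}=O(t^{-1/2})\cdot O(t^{-1/2})=O(t^{-1}).
\end{equation*}

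\textbf{Main obstacle.} The only delicate point is uniformity. The constants must not depend on $t$, and they do not, since $\varrho_0$, the contour and $\delta_\xi$ depend only on $\xi$. The $L^1$ bound must also hold uniformly over all six points, which follows from the $\mathbb Z_3$ and reflection symmetry of the construction.
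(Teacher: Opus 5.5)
Your proposal is correct and follows essentially the same route as the paper: the uniform separation of the crosses makes the Cauchy kernel bounded between distinct $\Gamma_p,\Gamma_q$, which gives $\|C_{w_q}f\|_{L^\infty(\Gamma_p)}\le C_\xi\|f\|_2\|w_q\|_2$ and $\|F_q\|_{L^\infty(\Gamma_p)}\le C\|w_q\|_1$, and then the $L^2$-boundedness of $C_+$ together with Lemma~\ref{lem:sec6:localweights} finishes both estimates. One small correction: the inner lips are parabolic arcs, not straight segments, but $\Sigma^{lo}$ is still a finite union of smooth arcs, so the $L^2$-boundedness you invoke holds.
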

\begin{proof}
On $\Gamma_p$, the function $C_{w_q}f$ is the ordinary Cauchy
integral over $\Gamma_q$. Their uniform separation gives
\begin{equation*}
\|C_{w_q}f\|_{L^\infty(\Gamma_p)}
\leq C_{\xi}\|fw_q\|_{L^1(\Gamma_q)}
\leq C_{\xi}\|f\|_2\|w_q\|_2.
\end{equation*}
The $L^2$ Cauchy projections on the finite family of local lips have
uniformly bounded norms. Multiplication by $w_p$ and application
of $C_+$ prove \eqref{sec6:mixedCauchyoperators}. The same separated
kernel gives $\|F_q\|_{L^\infty(\Gamma_p)}\leq C\|w_q\|_1$.
The second integral in \eqref{sec6:separatedCauchyfields} is bounded
by $C\|w_q\|_1\|w_p\|_1=O(t^{-1})$ on the indicated compact sets.
\end{proof}

\begin{RHP}\label{RHP:sec6:positive}
Find a $3\times3$ matrix $M^{lo}_{1,0}(\lambda)=M^{lo}_{1,0}(\lambda;x,t)$ such that
\begin{itemize}
\item $M^{lo}_{1,0}$ is analytic in
$U_a\setminus((\Sigma_1\cup\Sigma_2)\cap U_a)$, has no poles,
and is bounded sectorially at $a$.
\item Its boundary values satisfy
\begin{equation}\label{sec6:positiveLocalJump}
(M^{lo}_{1,0})_+=(M^{lo}_{1,0})_-V^{lo}_{1,0},\qquad
V^{lo}_{1,0}=V^{(2)}\big|_{(\Sigma_1\cup\Sigma_2)\cap U_a}.
\end{equation}
\item Asymptotic behavior:
\begin{equation}\label{sec6:positivePCmatching}
M^{lo}_{1,0}(\lambda)
\bigl(M^{pc}_{1,0}(\zeta(\lambda))\bigr)^{-1}\longrightarrow I,
\qquad \lambda\in\partial U_a,\quad t\to\infty,
\end{equation}
for the fixed $\xi$, with the cross model defined below.
\end{itemize}
\end{RHP}

Introduce the rescaled coordinate
\begin{equation}\label{positiveconformalmap}
\zeta=\zeta(\lambda)=\sqrt t\,f_+(\lambda),\qquad
f_+(\lambda)=\kappa_0(\lambda-a),
\end{equation}
where
\begin{equation}\label{positiveconformalderivative}
f_+'(a)=\kappa_0=
3^{1/4}\sqrt{\frac{2}{a(1+a^2)}}
=\sqrt{\theta_{12}''(a)}>0.
\end{equation}
The phase has the expansion
\begin{equation}\label{positivephasequadratic}
\vartheta_{12}(\lambda)=\vartheta_{12}(a)+\frac{i}{2}\zeta(\lambda)^2
+O\!\left(t|\lambda-a|^3\right),\qquad\lambda\to a.
\end{equation}
Set
\begin{equation}
r_a=r(a),\qquad d_a=1-|r_a|^2,\qquad
\nu=\nu_a=\nu(a)=-\frac{1}{2\pi}\log d_a,
\nonumber
\end{equation}
and, with $\tau_a$ from \eqref{sec4:endpointfactor}, define
\begin{equation}
r_{\lambda_1}=r_a\tau_a e^{it\theta_{12}(a)}
(\sqrt t\,\kappa_0)^{2i\nu},\qquad
|r_{\lambda_1}|=|r_a|.
\nonumber
\end{equation}
This parameter is independent of $\zeta$. The positive curvature in
\eqref{positiveconformalderivative} holds throughout $-1<\xi<1$.

\begin{equation}
X_j=e^{(2j-1)\pi i/4}\mathbb R_+,\quad j=1,2,3,4,
\qquad \Sigma^{pc}_{1,0}=X=\bigcup_{j=1}^4X_j,
\nonumber
\end{equation}
with all rays oriented outwards.
\begin{RHP}\label{RHP:sec6:PC}
Find a $3\times3$ matrix
$M^{pc}_{1,0}(\zeta)=M^{pc}_{1,0}(\zeta;r_{\lambda_1})$ such that
\begin{itemize}
\item $M^{pc}_{1,0}$ is analytic in $\mathbb C\setminus\Sigma^{pc}_{1,0}$,
has no poles, and is bounded sectorially at zero.
\item Its boundary values satisfy
\begin{equation}
(M^{pc}_{1,0})_+=(M^{pc}_{1,0})_-V^{pc}_{1,0},
\qquad \zeta\in\Sigma^{pc}_{1,0}.
\nonumber
\end{equation}
For $-1<\xi<1$,
\begin{equation}
V^{pc}_{1,0}(\zeta)=
\begin{cases}
\begin{pmatrix}
1&-r_{\lambda_1}\zeta^{-2i\nu}e^{i\zeta^2/2}&0\\
0&1&0\\0&0&1
\end{pmatrix},&\zeta\in e^{\pi i/4}\mathbb R_+,\\[4mm]
\begin{pmatrix}
1&0&0\\
-\dfrac{\overline{r_{\lambda_1}}}{d_a}\zeta^{2i\nu}e^{-i\zeta^2/2}&1&0\\
0&0&1
\end{pmatrix},&\zeta\in e^{3\pi i/4}\mathbb R_+,\\[4mm]
\begin{pmatrix}
1&\dfrac{r_{\lambda_1}}{d_a}\zeta^{-2i\nu}e^{i\zeta^2/2}&0\\
0&1&0\\0&0&1
\end{pmatrix},&\zeta\in e^{5\pi i/4}\mathbb R_+,\\[4mm]
\begin{pmatrix}
1&0&0\\\overline{r_{\lambda_1}}\zeta^{2i\nu}e^{-i\zeta^2/2}&1&0\\
0&0&1
\end{pmatrix},&\zeta\in e^{7\pi i/4}\mathbb R_+.
\end{cases}
\nonumber
\end{equation}
The powers use $\arg\zeta\in(-\pi,\pi)$.
\item Asymptotic behavior:
\begin{equation}\label{pcexpansion}
M^{pc}_{1,0}(\zeta)
=I+\frac{(M^{pc}_{1,0})_1}{\zeta}+O(\zeta^{-2}),
\qquad\zeta\to\infty.
\end{equation}
\end{itemize}
\end{RHP}

The model in Appendix~\ref{app:model-rhp}, with $q=r_{\lambda_1}$,
solves this problem by Lemma~\ref{lem:app:model-positive}.
\begin{lemma}\label{lem:sec6:PCcomparison}
For the selected local solution, as $t\to\infty$,
\begin{equation}\label{sec6:PCcomparison}
M^{lo}_{1,0}(\lambda)=
M^{pc}_{1,0}(\zeta(\lambda))+O(t^{-1}),
\qquad\lambda\in\partial U_a,
\end{equation}
for the fixed $\xi$, including the one-sided values at the lip
intersections. The local solution and its inverse are uniformly
bounded in the disk, with the corresponding sectorial traces.
\end{lemma}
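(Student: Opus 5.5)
The plan is to build the local solution explicitly from the cross model and then compare the two jump problems in the rescaled variable. I would set
\[
M^{lo}_{1,0}(\lambda):=M^{pc}_{1,0}\bigl(\zeta(\lambda)\bigr),\qquad \lambda\in U_a ,
\]
modified only by an error factor that solves a small-norm problem on the cross. Under \eqref{positiveconformalmap} the lips $(\Sigma_1\cup\Sigma_2)\cap U_a$ map onto segments of the rays $X_j$; the disk and lip geometry are adjusted so that this holds exactly, since $f_+$ is affine. I then compare $V^{lo}_{1,0}$ with $V^{pc}_{1,0}(\zeta(\lambda))$ entry by entry.

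On $\Sigma_1$ near $a$, Lemma~\ref{lem:sec4:innerextension} gives the lip value
\[
R_1=\rho_0(a\pm0)\,T^{(1)}_{12}(a)\,(\lambda-a)^{2i\nu_a},
\]
because $\chi_1=1$ and $\mathcal X_{\mathcal P}=0$ there. Combining this with \eqref{positivephasequadratic}, \eqref{sec4:frozenTvalues} and the definition of $r_{\lambda_1}$ should reproduce the entries of $V^{pc}_{1,0}$, up to the cubic remainder in the exponent:
\[
e^{-it\theta_{12}}=e^{-it\theta_{12}(a)}\,e^{-i\zeta^2/2}\,e^{O(t|\lambda-a|^3)}.
\]
The one nontrivial bookkeeping is matching the constants $\tau_a$ and $(\sqrt t\,\kappa_0)^{2i\nu}$; this is where the definition of $r_{\lambda_1}$ is forced. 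The difference of the jumps is then bounded by
\[
|V^{lo}_{1,0}-V^{pc}_{1,0}|\le C\,t|\lambda-a|^3e^{-ct|\lambda-a|^2}.
\]
Its $L^1$, $L^2$ and $L^\infty$ norms on the lips inside $U_a$ are therefore $O(t^{-1})$, $O(t^{-1})\cdot t^{1/4}$ and $O(t^{-1/2})$ respectively.

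Next I would define the error factor
\[
\mathcal E(\lambda)=M^{lo}_{1,0}(\lambda)\,M^{pc}_{1,0}\bigl(\zeta(\lambda)\bigr)^{-1}.
\]
Its jump is $M^{pc}_-\bigl(V^{lo}V^{pc\,-1}-I\bigr)(M^{pc}_-)^{-1}$. Since $M^{pc}$ and its inverse are bounded uniformly in $\zeta$, by Appendix~\ref{app:model-rhp} and the normalization \eqref{pcexpansion}, this jump is small in $L^1\cap L^2\cap L^\infty$. I would pose the problem for $\mathcal E$ on $\Gamma_a$ alone, without a jump on $\partial U_a$, with normalization $\mathcal E\to I$ at infinity after extending by the identity outside $U_a$. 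The standard Beals--Coifman estimate then gives
\[
\|\mu-I\|_{L^2}\lesssim t^{-3/4}, \qquad \mathcal E-I=O\bigl(\|w\|_{L^1}+\|\mu-I\|_{2}\|w\|_2\bigr).
\]
On $\partial U_a$ this gives $\mathcal E-I=O(t^{-1})$. Multiplying by the bounded $M^{pc}$ yields \eqref{sec6:PCcomparison}, and it also yields the matching condition \eqref{sec6:positivePCmatching}.

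The one-sided traces at the points where the lips meet $\partial U_a$ come from continuity of the Cauchy integrals of the smooth, rapidly decaying density up to the disk boundary. Uniform boundedness of $M^{lo}_{1,0}$ and its inverse follows from $\det=1$, which holds because the jumps are unimodular, together with the bounds on $M^{pc}$ and on $\mathcal E$.

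The main obstacle is the sharp $O(t^{-1})$ rate. The naive estimate $t|\lambda-a|^3$ gives only $O(t^{-1/2})$ in $L^\infty$ near the lip endpoints. Near $a$, also, the frozen coefficient $\rho_0(a\pm0)$ differs from the actual boundary data. However, by construction the lips carry exactly the frozen coefficient, and the $\bar\partial$ part absorbs the rest, so only the cubic phase error remains. For that term I would first try to keep the $L^1$ bound $O(t^{-1})$ and evaluate $\mathcal E$ only on $\partial U_a$, at fixed distance from $\Gamma_a\cap\{|\lambda-a|<\varrho_0/2\}$, so that the kernel stays bounded. If that fails, I would replace $f_+$ by the exact conformal map $f_+(\lambda)=\sqrt{2(\theta_{12}(\lambda)-\theta_{12}(a))}$, which makes the quadratic phase identity exact. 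The remaining error then comes only from the power factor $(\lambda-a)^{2i\nu}$ versus $\zeta^{2i\nu}$, which is analytic and $1+O(\lambda-a)$, and the rate follows from the same Gaussian integrals.
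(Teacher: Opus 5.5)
The paper states this lemma without proof; the appendix only identifies $M^{pc}_{1,0}$ with the parabolic-cylinder model. So your small-norm comparison is the argument that has to be supplied, and its core bookkeeping is right. The lips carry exactly the frozen coefficients, the constants match through $r_{\lambda_1}$, and only the cubic phase remainder survives. Then $\|w\|_{L^1}=O(t^{-1})$ together with $\|\mu-I\|_{L^2}\|w\|_{L^2}=O(t^{-3/2})$ gives $\mathcal E-I=O(t^{-1})$ at points of $\partial U_a$ away from the lips.

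\textbf{The lips do not map onto $X$.} Your set-up assumes they do, and this is false. In Section~\ref{s:4} the lips leave $a$ at angle $\phi_0=\arctan q_0<\pi/6$, and the inner ones are the parabolic arcs $v=\pm q_0s(a-s)/a$. The affine map $\zeta=\sqrt t\,\kappa_0(\lambda-a)$ preserves angles and shapes, so their images are neither at angle $\pi/4$ nor straight. With $\mathcal E=M^{lo}_{1,0}M^{pc}_{1,0}(\zeta)^{-1}$ as you define it, $\mathcal E$ jumps by $M^{pc}_{1,0}V^{lo}_{1,0}(M^{pc}_{1,0})^{-1}$ on $\Gamma_a$ and by a conjugate of $(V^{pc}_{1,0})^{-1}$ on $\zeta^{-1}(X)\cap U_a$. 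Both are $O(1)$ away from $I$ where $|\lambda-a|=O(t^{-1/2})$, so no small-norm argument applies. ``Adjusting the geometry'' would change $V^{(2)}$ itself and force you to redo Lemma~\ref{lem:sec4:innerextension} and the $\bar\partial$ estimates.

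The standard repair is to deform the model instead. In each of the four thin sectors between $X_j$ and the image lip, multiply $M^{pc}_{1,0}$ on the right by the analytically continued triangular jump factor or its inverse. The exponential $e^{\pm i\zeta^2/2}$ in that factor decays throughout the swept sector, and the cut $\arg\zeta=\pm\pi$ is never crossed. The deformed model then has its jumps on $\zeta(\Gamma_a)$, given by the same formulas, and equals $M^{pc}_{1,0}(\zeta)(I+O(e^{-ct}))$ on $\partial U_a$. Your estimates then run against it.

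\textbf{Traces at the lip intersections and uniform bounds.} The one-sided values at the lip intersections, which the lemma explicitly includes, do not follow from your construction. Posed on $\Gamma_a$ alone, the contour of $\mathcal E$ terminates on $\partial U_a$, where $\mu w_{\mathcal E}$ is exponentially small but nonzero. The Cauchy integral therefore behaves like $-\frac1{2\pi i}(\mu w_{\mathcal E})(p)\log(\lambda-p)$ at each endpoint $p$. So $\mathcal E$ is unbounded exactly at those points, and ``continuity up to the disk boundary'' fails there. Continue the jump of $\mathcal E$ smoothly past $\partial U_a$, for instance along the continued lips, tapered to $I$ outside a slightly larger disk. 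The intersection points then become interior points of smooth arcs carrying an exponentially small smooth density.

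Similarly, the uniform bound on $M^{lo}_{1,0}$ and its inverse up to $\Gamma_a$ and at $a$ needs a pointwise bound on $\mathcal E$, which the $L^2$ theory does not give. In the variable $\zeta$ the weight is $t^{-1/2}$ times a density bounded by $C|\zeta|^3e^{-c|\zeta|^2}$, regular on each ray and vanishing at the junction. That supplies the bound.

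\textbf{The fallback.} Your fallback to the exact conformal map cannot give the stated rate. With $\tilde\zeta=\sqrt t\,f_{\mathrm{exact}}(\lambda)$, one has on $\partial U_a$
\[
M^{pc}_{1,0}(\tilde\zeta)-M^{pc}_{1,0}(\zeta)=(M^{pc}_{1,0})_1(\tilde\zeta^{-1}-\zeta^{-1})+O(t^{-1}).
\]
This is of order $t^{-1/2}$ whenever $r(a)\ne0$, because $\theta_{12}'''(a)\ne0$. Incidentally, the $O(t^{-1/2})$ supremum of the phase error occurs at $|\lambda-a|\sim t^{-1/2}$, not near the lip endpoints.
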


\begin{proposition}\label{prop:sec6:localexpansions}
For $i=1,2$, uniformly on $\partial U_{\lambda_i}$,
\begin{equation}
M^{lo}_{i,0}(\lambda)=I+
\frac{\mathcal A_i}{\kappa_0\sqrt t(\lambda-\lambda_i)}
+O(t^{-1}),
\nonumber
\end{equation}
where
\begin{equation}\label{sec6:localCoefficientMatrices}
\mathcal A_i=
\begin{cases}
\begin{pmatrix}
0&\beta_{12}&0\\
\beta_{21}&0&0\\0&0&0
\end{pmatrix},&i=1,\\[4mm]
\begin{pmatrix}
0&\beta_{21}&0\\
\beta_{12}&0&0\\0&0&0
\end{pmatrix},&i=2.
\end{cases}
\end{equation}
These matrices are independent of $\lambda$; their entries depend on
$(\xi,t)$ through $r_{\lambda_1}$. The coefficients are
\begin{equation}\label{pcbetas}
\beta_{12}=-\frac{\sqrt{2\pi}e^{\pi i/4}e^{-\pi\nu/2}}
{\overline{r_{\lambda_1}}\Gamma(i\nu)},\qquad
\beta_{21}=-\frac{\sqrt{2\pi}e^{-\pi i/4}e^{-\pi\nu/2}}
{r_{\lambda_1}\Gamma(-i\nu)},
\end{equation}
with continuous value zero when $r_{\lambda_1}=0$, and
\begin{equation}
(M^{pc}_{1,0})_1=
\begin{pmatrix}0&\beta_{12}&0\\\beta_{21}&0&0\\0&0&0\end{pmatrix},
\qquad\beta_{21}=\overline{\beta_{12}},\quad|\beta_{12}|^2=\nu.
\nonumber
\end{equation}
\end{proposition}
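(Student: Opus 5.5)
The plan is to read off the expansion on $\partial U_{\lambda_i}$ from the parabolic-cylinder comparison of Lemma~\ref{lem:sec6:PCcomparison}, using the large-$\zeta$ expansion \eqref{pcexpansion} of the cross model. The point $\lambda_2=-a$ is then handled by the cofactor-negation reduction rather than by a second model problem.

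\textbf{Step 1: the point $\lambda_1=a$.} On $\partial U_a$ we have $|\zeta(\lambda)|=\sqrt t\,\kappa_0|\lambda-a|\ge c\sqrt t$. Inserting $\zeta=\sqrt t\kappa_0(\lambda-a)$ from \eqref{positiveconformalmap} into \eqref{pcexpansion} gives
$$M^{pc}_{1,0}(\zeta(\lambda))=I+\frac{(M^{pc}_{1,0})_1}{\kappa_0\sqrt t(\lambda-a)}+O(t^{-1}),$$
uniformly on the circle. The remainder $O(\zeta^{-2})$ is uniform because it comes from the explicit Appendix model, and $r_{\lambda_1}$ ranges over the compact set $|r_{\lambda_1}|=|r_a|$. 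Combining this with \eqref{sec6:PCcomparison} gives the claim for $i=1$ with $\mathcal A_1=(M^{pc}_{1,0})_1$.

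\textbf{Step 2: the coefficients.} The residue $(M^{pc}_{1,0})_1$ is computed in the standard way. The jumps of RH problem~\ref{RHP:sec6:PC} act only in the $\{1,2\}$ block, so the third row and column are trivial and the problem reduces to the $2\times2$ Deift--Zhou parabolic-cylinder problem. The plan is:
\begin{enumerate}[(i)]
\item Set $\Psi=M^{pc}\zeta^{i\nu\sigma}e^{-i\zeta^2\sigma/4}$, which has a constant jump on $\mathbb R$.
\item Note that $\Psi'\Psi^{-1}+\tfrac{i\zeta}{2}\sigma$ is entire and linear in $\zeta$. This yields Weber equations for the entries, with off-diagonal data $\beta_{12},\beta_{21}$ proportional to the residue.
\item Match the connection formulas of $D_{i\nu}$ against the jump $\begin{pmatrix}1&-r_{\lambda_1}\\ \bar r_{\lambda_1}&d_a\end{pmatrix}$ on the real line. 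This produces \eqref{pcbetas}; the precise sign conventions are taken from the Appendix.
\item Use $|\Gamma(i\nu)|^2=\pi/(\nu\sinh\pi\nu)$ and $d_a=e^{-2\pi\nu}$, which give $|r_a|^2=1-e^{-2\pi\nu}$ and hence $|\beta_{12}|^2=\nu$.
\end{enumerate}
The relation $\beta_{21}=\overline{\beta_{12}}$ follows from the reality symmetry of the model, since $\overline{\Gamma(i\nu)}=\Gamma(-i\nu)$. Continuity at $r_{\lambda_1}=0$ follows from $\Gamma(i\nu)\sim 1/(i\nu)$ together with $\nu\sim|r|^2/2\pi$.

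\textbf{Step 3: the point $\lambda_2=-a$.} The local solution at $-a$ is defined through the reduction $M^{lo}_{2,0}(\lambda)=M^{lo}_{1,0}(-\lambda)^{-T}$, which is compatible with \eqref{sec4:MRreductions}, \eqref{sec4:scalarreductions}, and with the local powers \eqref{sec4:localpowers}. Inverting and transposing the expansion from Step 1 at $-\lambda$ gives
$$I-\frac{\mathcal A_1^T}{\kappa_0\sqrt t(-\lambda-a)}+O(t^{-1})=I+\frac{\mathcal A_1^T}{\kappa_0\sqrt t(\lambda+a)}+O(t^{-1}).$$
The inverse contributes only $O(t^{-1})$ corrections because $\mathcal A_1^2/t=O(t^{-1})$. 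Since $\mathcal A_1^T$ is $\mathcal A_1$ with $\beta_{12}$ and $\beta_{21}$ swapped, this is exactly $\mathcal A_2$ in \eqref{sec6:localCoefficientMatrices}.

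\textbf{Main obstacle.} The main difficulty is checking that the negation reduction really maps the lips, orientations, and local branch $(-z-a)^{-2i\nu_a}$ at $-a$ onto those at $a$. In particular, the frozen endpoint values $T_{12}^{(2)}(-a)=\tau_a$ and $\rho_0(-a\pm0)$ must transform into the parameter $r_{\lambda_1}$ without an extra phase. I would verify this directly on the four jump matrices of the cross at $-a$, which are obtained via $V^0(-\lambda)=V^0(\lambda)^{-T}$. The fallback, if an extra phase appears, is to solve a separate cross model at $-a$ and absorb the phase into a redefined parameter.
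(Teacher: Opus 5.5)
Your proposal follows the paper's own argument. The case $i=1$ is Lemma~\ref{lem:sec6:PCcomparison} combined with the large-$\zeta$ expansion \eqref{pcexpansion} of the Appendix model; the coefficients come from the parabolic-cylinder solution, with $|\beta_{12}|^2=\nu$ obtained from $|\Gamma(i\nu)|^2=\pi/(\nu\sinh\pi\nu)$. The case $i=2$ comes from the cofactor reduction \eqref{negativeparametrixcofactor}, equivalently \eqref{app:model-cofactor} with $\zeta_-=-\kappa_0\sqrt t(\lambda+a)$, which turns $\mathcal A_1$ into $\mathcal A_1^T=\mathcal A_2$ exactly as you compute.

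Your ``main obstacle'' needs no separate verification. The lip jumps $V^{(2)}$ already inherit the negation symmetry from \eqref{sec4:Rreductions}, so $M^{lo}_{1,0}(-\lambda)^{-T}$ automatically solves the local problem at $-a$ with the same parameter $r_{\lambda_1}$, and no extra phase can appear.

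One small point: for the jumps as written, the conjugating factor in your Step~2(i) should be $\zeta^{-i\nu\sigma}e^{i\zeta^2\sigma/4}$ rather than $\zeta^{i\nu\sigma}e^{-i\zeta^2\sigma/4}$. This sign slip is harmless, since you defer the conventions to the Appendix.
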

The negative point and its rotations are obtained from
\begin{equation}\label{negativeparametrixcofactor}
M^{lo}_{2,0}(\lambda)=M^{lo}_{1,0}(-\lambda)^{-T},\qquad
P_{-a}(\lambda)=P_a(-\lambda)^{-T},
\end{equation}
and
\begin{equation}\label{rotatedparametrices}
M^{lo}_{i,\ell}(\lambda)
=\mathcal A^\ell M^{lo}_{i,0}(\omega^{-\ell}\lambda)\mathcal A^{-\ell},
P_{\omega^\ell\lambda_i}(\lambda)
=\mathcal A^\ell P_{\lambda_i}(\omega^{-\ell}\lambda)\mathcal A^{-\ell},
\qquad i=1,2,\quad\ell=0,1,2,
\end{equation}
Set $M^{lo}=\mathcal Q_{i,\ell}M^{lo}_{i,\ell}$ in
$U_{\omega^\ell\lambda_i}$ and $P_p=M^rM^{lo}$ in $U_p$,
where the analytic factors $\mathcal Q_{i,\ell}=I+O(t^{-1/2})$
are defined in \eqref{sec6:combinedLocalNormalization}.

\begin{proposition}\label{prop:sec6:sixLocalExpansions}
Uniformly on $\partial U$,
\begin{equation}
M^{lo}(\lambda)=I+\frac1{\sqrt t}
\sum_{i=1}^{2}\mathcal F_i(\lambda)+O(t^{-1}),
\nonumber
\end{equation}
where
\begin{equation}
\mathcal F_i(\lambda)=\frac1{\kappa_0}
\sum_{\ell=0}^{2}
\frac{\omega^\ell\mathcal A^\ell
\mathcal A_i\mathcal A^{-\ell}}
{\lambda-\omega^\ell\lambda_i},\qquad i=1,2.
\nonumber
\end{equation}
Here $\mathcal A_i$ is given by
\eqref{sec6:localCoefficientMatrices}; write
$\mathcal F=\mathcal F_1+\mathcal F_2$.
\end{proposition}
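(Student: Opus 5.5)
The expansion is obtained disk by disk: compute it in the two real disks $U_{\pm a}$ from Proposition~\ref{prop:sec6:localexpansions}, carry it to the other four disks with the symmetries \eqref{negativeparametrixcofactor}--\eqref{rotatedparametrices}, and then check that the normalizing factors $\mathcal Q_{i,\ell}$ and the cross-disk contributions stay inside $O(t^{-1})$.

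\emph{Positive real point.} On $\partial U_a$, combine Lemma~\ref{lem:sec6:PCcomparison} with the large-$\zeta$ expansion \eqref{pcexpansion}, taking $\zeta=\sqrt t\,\kappa_0(\lambda-a)$. Since $|\zeta|\asymp\sqrt t$ on $\partial U_a$, this gives
\[
M^{lo}_{1,0}=I+\mathcal A_1/(\kappa_0\sqrt t(\lambda-a))+O(t^{-1}).
\]
The $\ell=0$, $i=1$ term of $\mathcal F_1$ is exactly this.

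\emph{Negative real point.} Use \eqref{negativeparametrixcofactor}. Inverting and transposing $I+X/\sqrt t+O(t^{-1})$ gives $I-X^T/\sqrt t+O(t^{-1})$. Substituting $-\lambda$ turns $1/(-\lambda-a)$ into $-1/(\lambda+a)$, so the term becomes
\[
\mathcal A_1^T/(\kappa_0\sqrt t(\lambda+a)).
\]
Since $\mathcal A_1^T$ is the matrix with $\beta_{12},\beta_{21}$ swapped, this is $\mathcal A_2/(\kappa_0\sqrt t(\lambda-\lambda_2))$, as required.

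\emph{Rotated points.} For $U_{\omega^\ell\lambda_i}$, apply \eqref{rotatedparametrices}. Writing $\omega^{-\ell}\lambda-\lambda_i=\omega^{-\ell}(\lambda-\omega^\ell\lambda_i)$ produces the Jacobian factor $\omega^\ell$ and the conjugation $\mathcal A^\ell(\cdot)\mathcal A^{-\ell}$. This gives, on $\partial U_{\omega^\ell\lambda_i}$, the single term
\[
\frac{\omega^\ell\mathcal A^\ell\mathcal A_i\mathcal A^{-\ell}}{\kappa_0\sqrt t(\lambda-\omega^\ell\lambda_i)}+O(t^{-1}).
\]

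\emph{Global form.} On $\partial U_p$, the proposition claims the full sums $\mathcal F_1+\mathcal F_2$, not just the local term. The other eleven summands are $O(t^{-1/2})$ and analytic near $p$, and they have to be supplied by $\mathcal Q_{i,\ell}$. I would define $\mathcal Q_{i,\ell}$ in \eqref{sec6:combinedLocalNormalization} as
\[
\mathcal Q_{p}=I+\frac1{\sqrt t}\sum_{q\ne p}\frac{\text{(coefficient at }q)}{\lambda-q},
\]
which is analytic in $U_p$ because the disks are disjoint and equals $I+O(t^{-1/2})$. Then
\[
\mathcal Q_pM^{lo}_p=I+t^{-1/2}\mathcal F+O(t^{-1}),
\]
because the cross term is a product of two $O(t^{-1/2})$ quantities, by the same separation estimate used in Lemma~\ref{lem:sec6:localinteraction}, \eqref{sec6:separatedCauchyfields}. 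All error bounds are uniform, since there are finitely many disks with boundaries at fixed distance from the stationary points.

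\emph{Main obstacle.} The negative-point step is where this is most likely to fail. The cofactor map must send the parabolic-cylinder coefficient exactly to $\mathcal A_2$, with no extra sign and with the correct matching of $\beta_{12}$ and $\beta_{21}$. This depends on the branch conventions \eqref{sec4:localpowers} and on $\tau_a$ versus $T_{12}^{(2)}(-a)=\tau_a$ in \eqref{sec4:frozenTvalues}. I would check it by verifying directly that the transposed jump on the crosses at $-a$ reproduces $V^{lo}$ there, using $\rho_0(-s)=-\overline{\rho_0(s)}$, before trusting the formula $\mathcal A_1^T=\mathcal A_2$.
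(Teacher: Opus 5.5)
Your proposal is correct and follows the paper's own argument. The single-point expansions come from Lemma~\ref{lem:sec6:PCcomparison} together with \eqref{negativeparametrixcofactor}--\eqref{rotatedparametrices}. The remaining five summands of $\mathcal F$ (five, not eleven, since there are only six stationary points) are supplied by an analytic left factor built from $\mathcal H_{i,\ell}=\mathcal F-\mathcal F^{\mathrm{loc}}_{i,\ell}$. The cross term is then trivially $O(t^{-1})$, because $\mathcal H_{i,\ell}$ and $\mathcal F^{\mathrm{loc}}_{i,\ell}$ are bounded on $\partial U_p$.

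The only difference is that the paper takes $\mathcal Q_{i,\ell}=\exp(t^{-1/2}\mathcal H_{i,\ell})$ instead of $I+t^{-1/2}\mathcal H_{i,\ell}$. This gives the same expansion, but also gives $\det\mathcal Q_{i,\ell}=1$ exactly (since $\operatorname{tr}\mathcal H_{i,\ell}=0$) and the exact cofactor reduction $\mathcal Q_{2,0}(-\lambda)=\mathcal Q_{1,0}(\lambda)^{-T}$. Your linear factor satisfies these only up to $O(t^{-1})$, and the paper uses them afterwards for the reductions of $M^{lo}$ and $P_p$.
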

\begin{proof}
For $p=\omega^\ell\lambda_i$, the single-point expansion has coefficient
\begin{equation}
\mathcal F^{\mathrm{loc}}_{i,\ell}(\lambda)=
\frac{\omega^\ell\mathcal A^\ell
\mathcal A_i\mathcal A^{-\ell}}
{\kappa_0(\lambda-p)}.
\nonumber
\end{equation}
It is the summand of $\mathcal F$ with pole at $p$. Hence
\begin{equation}\label{sec6:combinedLocalNormalization}
\begin{aligned}
\mathcal H_{i,\ell}(\lambda)
&=\mathcal F(\lambda)-\mathcal F^{\mathrm{loc}}_{i,\ell}(\lambda),\\
\mathcal Q_{i,\ell}(\lambda)
&=\exp\!\left(t^{-1/2}\mathcal H_{i,\ell}(\lambda)\right),
\qquad\lambda\in U_p,
\end{aligned}
\end{equation}
is analytic in $U_p$: its exponent contains only the other five
stationary-point contributions.
The matrices $\mathcal H_{i,\ell}$ are uniformly bounded on the local
disks, since the pole locations are uniformly separated and the
coefficient matrices are bounded. Thus $\mathcal Q_{i,\ell}^{\pm1}=I+O(t^{-1/2})$ there.
Their determinants are one because $\operatorname{tr}\mathcal H_{i,\ell}=0$.
Analytic left multiplication preserves every local jump, so
$M^{lo}=\mathcal Q_{i,\ell}M^{lo}_{i,\ell}$ satisfies
RH problem~\ref{RHP:sec6:local}.

The cyclic and reality reductions follow from those of the local
coefficients and their residues. At the negative point,
\begin{equation*}
\mathcal F(-\lambda)=-\mathcal F(\lambda)^T,\qquad
\mathcal H_{2,0}(-\lambda)=-\mathcal H_{1,0}(\lambda)^T,
\end{equation*}
so $\mathcal Q_{2,0}(-\lambda)=\mathcal Q_{1,0}(\lambda)^{-T}$.
Since $(AB)^{-T}=A^{-T}B^{-T}$, the combined factor and $P_p$
retain all three reductions. Finally, on $\partial U_p$,
\begin{equation*}
\begin{aligned}
M^{lo}
&=\bigl(I+t^{-1/2}\mathcal H_{i,\ell}+O(t^{-1})\bigr)
  \bigl(I+t^{-1/2}\mathcal F^{\mathrm{loc}}_{i,\ell}+O(t^{-1})\bigr)\\
&=I+t^{-1/2}\mathcal F+O(t^{-1}),
\end{aligned}
\end{equation*}
which proves the six-point sum.
\end{proof}
These expansions follow from Lemma~\ref{lem:sec6:PCcomparison} and
\eqref{negativeparametrixcofactor}--\eqref{rotatedparametrices}. The dressed residue at $a$ is
\begin{equation}\label{dressedradiationcoefficient}
\mathcal K_a=\frac1{\kappa_0}M^\Lambda(a)
(M^{pc}_{1,0})_1(M^\Lambda(a))^{-1}.
\end{equation}
The six residues satisfy
\begin{equation}\label{radiationcoefficientreductions}
\mathcal K_{\omega p}=\omega\mathcal A\mathcal K_p\mathcal A^{-1},
\mathcal K_{\bar p}=\mathcal B\overline{\mathcal K_p}\mathcal B^{-1},
\mathcal K_{-p}=\mathcal K_p^T.
\end{equation}
In the exterior regions $U=\varnothing$ and no local factor is needed.

\subsection{The small-norm RH problem for $E(\lambda)$}
\label{s:dbar-problem}

Consider the error factor $E$
in \eqref{sec4:purefactorization}. Its jump contour is
\begin{equation}
\Sigma_E=(\Sigma^{(ju)}\setminus U)\cup\partial U,
\nonumber
\end{equation}
as shown in Figure~\ref{fig:sec6:errorcontour}. Each circle is oriented
clockwise, with its exterior on the plus side.

\begin{RHP}\label{RHP:sec6:error}
Find a $3\times3$ matrix $E(\lambda)=E(\lambda;x,t)$ such that
\begin{itemize}
\item $E$ is analytic in $\mathbb C\setminus\Sigma_E$, with
$E_\pm-I\in L^2(\Sigma_E)$ and bounded sectorial values at the
finite contour junctions. It extends analytically to zero.
\item $E(\lambda)=I+O(\lambda^{-1})$ as $\lambda\to\infty$.
\item The jump relation is $E_+=E_-V_E$ on $\Sigma_E$, where
\begin{equation}\label{pureRHerrorcirclejump}
V_E=\begin{cases}
M^r V^{(2)}(M^r)^{-1},&\lambda\in\Sigma^{(ju)}\setminus U,\\
M^rM^{lo}(M^r)^{-1},&\lambda\in\partial U.
\end{cases}
\end{equation}
\end{itemize}
\end{RHP}

\begin{figure}[htbp]
\centering
\begin{tikzpicture}[>=stealth,scale=0.94,
 secsixarrow/.style={postaction={decorate},decoration={markings,
 mark=at position 0.57 with {\arrow{stealth}}}},
 secsixcircle/.style={blue!75!black,line width=1.05pt,
 postaction={decorate},decoration={markings,
 mark=at position 0.75 with {\arrow{stealth}}}},
 every node/.style={font=\small}]
\def\aa{2.25}\def\qq{0.34}\def\LL{4.90}\def\rr{0.47}
\foreach \ang in {0,60,120,180,240,300}{
\begin{scope}[rotate=\ang]
\draw[densely dashed,black!55,line width=0.6pt,secsixarrow]
 (0,0)--(\LL,0);
\begin{scope}[even odd rule]
\clip (-0.1,-1.8) rectangle (5.1,1.8)
 (\aa,0) circle (\rr);
\draw[black,line width=0.85pt,secsixarrow,domain=2.25:0,samples=50]
 plot (\x,{\qq*\x*(\aa-\x)/\aa});
\draw[black,line width=0.85pt,secsixarrow,domain=0:2.25,samples=50]
 plot (\x,{-\qq*\x*(\aa-\x)/\aa});
\draw[black,line width=0.85pt,secsixarrow]
 (\LL,{\qq*(\LL-\aa)})--(\aa,0);
\draw[black,line width=0.85pt,secsixarrow]
 (\aa,0)--(\LL,{-\qq*(\LL-\aa)});
\end{scope}
\draw[secsixcircle] ({\aa+\rr},0) arc (0:-360:\rr);
\fill[black] (\aa,0) circle (1.3pt);
\end{scope}}
\fill[black] (0,0) circle (1.3pt);
\node[fill=white,inner sep=1pt] at (0,-0.28) {$0$};
\node[fill=white,inner sep=1pt] at (2.25,-0.73) {$\lambda_0$};
\node[fill=white,inner sep=1pt] at (-2.25,-0.73) {$-\lambda_0$};
\node[anchor=east,fill=white,inner sep=1pt] at (-1.68,1.95)
 {$\omega\lambda_0$};
\node[anchor=west,fill=white,inner sep=1pt] at (1.68,1.95)
 {$-\omega^2\lambda_0$};
\node[anchor=east,fill=white,inner sep=1pt] at (-1.68,-1.95)
 {$\omega^2\lambda_0$};
\node[anchor=west,fill=white,inner sep=1pt] at (1.68,-1.95)
 {$-\omega\lambda_0$};
\node[anchor=west] at (5.05,0) {$\mathbb R$};
\node[anchor=south] at (120:5.04) {$\omega\mathbb R$};
\node[anchor=south] at (60:5.04) {$\omega^2\mathbb R$};
\end{tikzpicture}
\caption{The error contour $\Sigma_E$ in $-1<\xi<1$.
The black solid curves are the surviving lips
$\Sigma^{(ju)}\setminus U$; the blue circles are $\partial U_p$,
oriented clockwise. The dashed spectral lines are shown for reference
and carry no jump of $E$.}
\label{fig:sec6:errorcontour}
\end{figure}

On the remaining lips, \eqref{sec4:lipdecay} and the bounds for
$M^r$ give
\begin{equation}
\|V_E-I\|_{L^q(\Sigma^{(ju)}\setminus U)}
=O(e^{-ct}),\qquad 1\leq q\leq\infty.
\nonumber
\end{equation}
On the circles, Proposition~\ref{prop:sec6:sixLocalExpansions} yields
\begin{equation}
\|V_E-I\|_{L^\infty(\partial U)}
=\|M^r(M^{lo}-I)(M^r)^{-1}\|_{L^\infty(\partial U)}
=O(t^{-1/2}).
\nonumber
\end{equation}
We apply the Cauchy representation of
Subsection~\ref{sec5:discreteerror} to obtain the value at the
reconstruction point.

\begin{proposition}
\label{prop:pureRHerror-expansion}
As $t\to\infty$, for the fixed $\xi$,
\begin{equation}\label{pureRHerroratZero}
E(0)=I+t^{-1/2}\mathcal H^{(0)}(0)+O(t^{-1}),
\end{equation}

where
\begin{equation}\label{sec6:errorReconstructionCoefficient}
\mathcal H^{(0)}(0)=-\sum_{p\in\mathcal S(\xi)}\frac{\mathcal K_p}{p},
\end{equation}
and $\mathcal K_p$ is specified by
\eqref{dressedradiationcoefficient}--\eqref{radiationcoefficientreductions}.
\end{proposition}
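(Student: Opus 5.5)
The plan is to run the small-norm theory for RH problem~\ref{RHP:sec6:error} exactly as in Subsection~\ref{sec5:discreteerror}. Write $w_E=V_E-I$ and $C_Ef=C_-(fw_E)$ on $\Sigma_E$. The bounds just established give $\|w_E\|_{L^q(\Sigma^{(ju)}\setminus U)}=O(e^{-ct})$ and $\|w_E\|_{L^\infty(\partial U)}=O(t^{-1/2})$. Since $\partial U$ consists of six fixed circles of finite length, we also get $\|w_E\|_{L^1\cap L^2}=O(t^{-1/2})$. Boundedness of $C_-$ on $L^2(\Sigma_E)$ then gives $\|C_E\|\le Ct^{-1/2}<\tfrac12$ for large $t$. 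The Neumann series yields a unique $\mu_E$ with $\|\mu_E-I\|_{L^2}=O(t^{-1/2})$. Finally, $E=I+\frac1{2\pi i}\int_{\Sigma_E}\frac{\mu_E w_E}{s-\lambda}ds$. Because $\Sigma_E$ stays a fixed distance from $0$, the value at zero is $E(0)=I+\frac1{2\pi i}\int_{\Sigma_E}\mu_E(s)w_E(s)s^{-1}ds$.

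Next we split this integral into pieces. The lip part is $O(e^{-ct})$. On $\partial U$, replacing $\mu_E$ by $I$ costs at most $\|\mu_E-I\|_{L^2}\|w_E\|_{L^2}=O(t^{-1})$. It therefore remains to evaluate $\frac1{2\pi i}\oint_{\partial U}w_E(s)s^{-1}ds$ to order $t^{-1/2}$. By Proposition~\ref{prop:sec6:sixLocalExpansions}, on $\partial U_p$ we have
\[
w_E(s)=t^{-1/2}M^r(s)\mathcal F(s)M^r(s)^{-1}+O(t^{-1}).
\]
By Proposition~\ref{prop:sec5:discretecomparison}, $M^r=(I+O(e^{-\delta_0t}))M^\Lambda$ on $\partial U$, and Lemma~\ref{lem:sec5:finitecriterion} bounds $M^\Lambda$ and $(M^\Lambda)^{-1}$ uniformly, since the disks $U_p$ avoid the retained poles. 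So up to $O(t^{-1})$ the integrand is $t^{-1/2}s^{-1}M^\Lambda(s)\mathcal F(s)M^\Lambda(s)^{-1}$.

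The remaining step is residue calculus on each clockwise circle $\partial U_p$. The only singularity of this integrand inside $U_p$ is the simple pole of the summand of $\mathcal F$ at $p$: the other five summands, $M^\Lambda$, and $s^{-1}$ are all analytic in $U_p$. The residue there is $p^{-1}\frac1{\kappa_0}M^\Lambda(p)\omega^\ell\mathcal A^\ell\mathcal A_i\mathcal A^{-\ell}M^\Lambda(p)^{-1}$. The clockwise orientation contributes a minus sign, so $\frac1{2\pi i}\oint_{\partial U_p}=-\mathcal K_p/p$.

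Identifying this with $\mathcal K_p$ is where I expect the main obstacle, namely bookkeeping. At $p=a$ it is exactly \eqref{dressedradiationcoefficient}, because $\mathcal A_1=(M^{pc}_{1,0})_1$. For the other five points we must check that conjugating the rotated and transposed coefficients by $M^\Lambda(p)$ reproduces \eqref{radiationcoefficientreductions}. For rotations we use $M^\Lambda(\omega\lambda)=\mathcal AM^\Lambda(\lambda)\mathcal A^{-1}$. At $-a$ we use $\mathcal A_2=\mathcal A_1^T$ together with $M^\Lambda(-\lambda)=M^\Lambda(\lambda)^{-T}$, which gives $M^\Lambda(-a)\mathcal A_1^TM^\Lambda(-a)^{-1}=(M^\Lambda(a)\mathcal A_1M^\Lambda(a)^{-1})^T$. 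A subtlety to watch is whether the $O(t^{-1})$ error in the local expansion is uniform enough to be integrated; this holds because the circles are fixed and of finite length. Summing over $p\in\mathcal S(\xi)$ then gives \eqref{pureRHerroratZero} and \eqref{sec6:errorReconstructionCoefficient}.
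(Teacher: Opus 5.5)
Your proposal is correct and follows essentially the paper's argument: the small-norm Cauchy representation, replacing $\mu_E$ by $I$ at cost $O(t^{-1})$, and residue calculus on the clockwise circles $\partial U_p$ using Proposition~\ref{prop:sec6:sixLocalExpansions}. The paper first derives $E(\lambda)=I+t^{-1/2}\sum_p\mathcal K_p/(\lambda-p)+O(t^{-1})$ and then sets $\lambda=0$, whereas you evaluate with the kernel $s^{-1}$ directly and also carry out the symmetry bookkeeping identifying the five rotated and negated residues with $\mathcal K_p$, which the paper leaves implicit. One small correction: the interior lips in $\Sigma^{(ju)}\setminus U$ actually reach the origin, so $\Sigma_E$ does not stay away from $0$; the correct justification for evaluating at zero is that $w_E$ vanishes near zero, because the cutoffs $\chi_i$ make the lip jumps the identity away from the stationary points, and this is how the paper phrases it.
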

\begin{proof}
We first establish the expansion uniformly on compact sets outside
$\overline U$ and separated from $\Sigma_E$:
\begin{equation}\label{pureRHerrorexpansion}
E(\lambda)=I+\frac1{\sqrt t}
\sum_{p\in\mathcal S(\xi)}\frac{\mathcal K_p}{\lambda-p}
+O(t^{-1}).
\end{equation}

On $\partial U_p$ the first term of $w_E$ is
$t^{-1/2}M^r(s)\mathcal F(s)M^r(s)^{-1}$.
Of the six summands in $\mathcal F$, only the one with pole at $p$
has a singularity in $U_p$. Its dressed residue is
$\mathcal K_p+O(e^{-\delta_0t})$
by \eqref{sec5:errorrepresentation} and
\eqref{dressedradiationcoefficient}.
The analytic normalization preserves this residue.
Since the circle is clockwise and $\lambda$ lies outside its disk,
\begin{equation*}
\frac1{2\pi i}\int_{\partial U_p}
\frac{M^r(s)\mathcal F(s)M^r(s)^{-1}}{s-\lambda}\,ds
=\frac{\mathcal K_p+O(e^{-\delta_0t})}{\lambda-p}.
\end{equation*}
The boundary expansion has an $O(t^{-1})$ remainder in $L^1$.
Replacing $\mu_E$ by $I$ costs at most
$C\|\mu_E-I\|_2\|w_E\|_2=O(t^{-1})$, and the other lips contribute
exponentially little. This proves \eqref{pureRHerrorexpansion}.

Since the jump weight vanishes near zero, the same integral estimates
apply there. Evaluating at zero gives \eqref{pureRHerroratZero} and
\eqref{sec6:errorReconstructionCoefficient}. Since the jump support
is compact, expanding the same Cauchy integrals at infinity gives
\begingroup
\begin{align}
E(\lambda)&=I+\frac{E_1}{\lambda}+O(\lambda^{-2}),
\qquad\lambda\to\infty,\nonumber
\\
E_1&=\frac1{\sqrt t}\sum_{p\in\mathcal S(\xi)}\mathcal K_p
+O(t^{-1}).
\nonumber
\end{align}
\endgroup
\end{proof}
\section{Contribution from the $\bar\partial$ components}
\label{s:dbar-contribution}

Fix $\xi$ with $|\xi|\ne1$, let $x=\xi t$.
Removing the pure RH factor from $M^{(2)}$ gives
\begin{equation}
M^{(3)}(\lambda)=M^{(2)}(\lambda)(M^R(\lambda))^{-1}.
\nonumber
\end{equation}

\begin{Dbarproblem}
\label{sec7:pureproblem}
Find a $3\times3$ matrix $M^{(3)}(x,t,\lambda)$ such that:
\begin{itemize}
\item $M^{(3)}$ is continuous on $\mathbb C$, has no jumps or poles,
and has locally integrable first weak derivatives.
\item $M^{(3)}(\lambda)=I+O(\lambda^{-1})$ as $\lambda\to\infty$.
\item In the distributional sense,
\begin{equation}\label{sec4:puredbar}
\bar\partial M^{(3)}(\lambda)=M^{(3)}(\lambda)W^{(3)}(\lambda).
\end{equation}
\end{itemize}
\end{Dbarproblem}
Here
\begin{equation}\label{sec7:conjugatedSource}
W^{(3)}(\lambda)=M^R(\lambda)W^{(2)}(\lambda)(M^R(\lambda))^{-1},
\end{equation}
with $W^{(2)}$ given by \eqref{sec4:W2explicit}.
The normalized solution satisfies the Cauchy--Green equation
\begin{equation}\label{puredbarintegralequation}
M^{(3)}(\lambda)
=I+\frac1\pi\iint_{\mathbb C}
\frac{M^{(3)}(z)W^{(3)}(z)}{\lambda-z}\,dA(z),
\end{equation}

where $dA$ is planar Lebesgue measure and
$\bar\partial=\tfrac12(\partial_s+i\partial_v)$.
Equivalently,
\begin{equation}
(I-\mathcal C_W)M^{(3)}=I,
\nonumber
\end{equation}
where $\mathcal C_W$ acts on bounded matrix-valued functions by
\begin{equation}
(\mathcal C_W F)(\lambda)
=\frac1\pi\iint_{\mathbb C}
\frac{F(z)W^{(3)}(z)}{\lambda-z}\,dA(z).
\nonumber
\end{equation}
We estimate $\mathcal C_W$ in the two regions separately.
\subsection{In the region $\xi\in(-\infty,-1]\cup[1,+\infty)$}\label{sec7:exterior}
\begin{lemma}\label{lem:sec7:outerCG}
For each fixed $\xi$ with $|\xi|\ge1$,
\begin{equation}\label{outerCauchyGreenoperator}
\|\mathcal C_W\|_{L^\infty\to L^\infty}
\leq C_{\xi}t^{-1/2},\qquad t\to\infty.
\end{equation}

\end{lemma}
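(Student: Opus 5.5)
The estimate reduces to a pointwise bound on the Cauchy kernel integrated against the $\bar\partial$ weight. For $F\in L^\infty$,
\begin{equation*}
|(\mathcal C_WF)(\lambda)|\leq \frac{\|F\|_\infty}{\pi}\iint_{\Omega}\frac{|W^{(3)}(z)|}{|\lambda-z|}\,dA(z),
\end{equation*}
so it suffices to bound the integral on the right by $C_\xi t^{-1/2}$, uniformly in $\lambda\in\mathbb C$.

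\emph{Step 1: remove the conjugation.} In the exterior region Remark~\ref{rem:sec4:exterior-model} gives $M^R=M^r$. By Proposition~\ref{prop:sec5:discretecomparison} and Lemma~\ref{lem:sec5:finitecriterion}, $M^r$ and its inverse are uniformly bounded on the closed lenses, since these avoid the pole disks and the circle contour. Hence \eqref{sec7:conjugatedSource} and \eqref{sec4:W2decay} give
\begin{equation*}
|W^{(3)}(z)|\leq C|\bar\partial R_j(z)|e^{-t|\Im\theta_{12}(\omega^{k}z)|},\qquad z\in\Omega_j,
\end{equation*}
with the rotation $\omega^k$ matching $j$. By the $\mathbb Z_3$ and reflection symmetries of \eqref{sec4:scalarreductions}, it suffices to treat a single lens, say $\Omega_1$ on the positive real side, where $z=s+iv$ with $s>0$ and $|v|<q_0 s$.

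\emph{Step 2: the phase lower bound.} From \eqref{sec4:outerphase}, with $\sin\phi\asymp |v|/|z|$ and $\ell+\ell^{-1}\geq \max(|z|,|z|^{-1})$, one gets $|\Im\theta_{12}|\geq c|v|(1+|z|^{-2})\geq c|v|$. This covers $|\xi|>1$. At $|\xi|=1$, \eqref{Imtheta12} gives $|\Im\theta_{12}|=c|v|$ or $c|v||z|^{-2}$, and this borderline case is where I expect the main obstacle. At $\xi=1$ the factor is $|v|$ exactly, which is harmless. At $\xi=-1$ the bound degenerates to $|v|/|z|^2$ for large $|z|$. I would compensate by using the rapid decay of $\rho_0$ at infinity, absorbing a power $|z|^{2}$ into the $\bar\partial R$ bound. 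Correspondingly, near zero I would use the flatness of $\rho_0$ together with the sharper bound $|\bar\partial R|\leq C(|\rho_0'(s)|+|\rho_0(s)|/|s|)$ from \eqref{sec4:outerdbarbound}.

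\emph{Step 3: the two integrals.} Split $\bar\partial R_1$ via \eqref{sec4:outerdbarbound} into $I_1$, carrying $|\rho_0'(s)|$, and $I_2$, carrying $|\rho_0(s)|/|s|$. Both are functions of $s$ lying in $L^2(\mathbb R_+)$ and weighted enough to sit in $L^1$. For $I_1$, Cauchy--Schwarz in $s$ gives
\begin{equation*}
\int\frac{ds}{|z-\lambda|^2}\leq \frac{\pi}{|v-\Im\lambda|},
\end{equation*}
so
\begin{equation*}
I_1\leq C\|\rho_0'\|_{L^2}\int_0^\infty e^{-ctv}|v-\Im\lambda|^{-1/2}\,dv\leq Ct^{-1/2}.
\end{equation*}
For $I_2$, the same argument applies with $|\rho_0(s)|/s\in L^2$, which holds by flatness at zero. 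Alternatively, one can use Hölder with $p>2$ to obtain the same $t^{-1/2}$ rate. Summing over the six lenses proves \eqref{outerCauchyGreenoperator}.
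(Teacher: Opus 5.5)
Your proposal is correct and follows the paper's proof essentially step for step:
\begin{itemize}
\item the pointwise reduction;
\item the bound $\|M^R\|\,\|(M^R)^{-1}\|\leq C_\xi$ on the lenses;
\item reduction to one representative lens with weight $e^{-ctv}$;
\item Cauchy--Schwarz in $s$ against $\int ds/|z-\lambda|^2=\pi/|v-\Im\lambda|$, followed by the rescaled $v$-integral.
\end{itemize}
The only difference is that you treat the second term through $|\rho_0(s)|/s\in L^2(\mathbb R_+)$ and Cauchy--Schwarz, whereas the paper uses the $|\lambda|^{-1/2}$ bound with H\"older for $p>2$; yours is a harmless simplification.

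Your concern about $\xi=-1$ is legitimate, and the paper does not address it: its phase bound \eqref{sec4:outerphase} is stated only for $|\xi|>1$. At $\xi=-1$ the weight is only $e^{-ctv/|z|^2}$. Your rapid-decay fix works, but the loss sits in the exponent, so it cannot literally be absorbed pointwise into the $\bar\partial R$ bound. Make it precise with a dyadic split $|z|\asymp S\geq1$: rescaling gives $\int_0^\infty e^{-ctv/S^2}|v-\Im\lambda|^{-1/2}\,dv\leq CSt^{-1/2}$, so each shell costs one factor $S$. This is paid for by $\sum_S S\|\rho_0'\|_{L^2(S,2S)}<\infty$, and likewise for $|\rho_0|/s$.
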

\begin{proof}
For a bounded matrix-valued function $F$,
\begin{equation}
\|\mathcal C_W F\|_{L^\infty}
\leq\frac{\|F\|_{L^\infty}}{\pi}
\sup_{\lambda\in\mathbb C}\iint_{\mathbb C}
\frac{\|W^{(3)}(z)\|}{|\lambda-z|}\,dA(z).
\nonumber
\end{equation}
The source vanishes outside the lenses. On its support,
\eqref{sec7:conjugatedSource} gives
\begin{equation}
\|W^{(3)}(z)\|
\leq\|M^R(z)\|\,\|(M^R(z))^{-1}\|\,\|W^{(2)}(z)\|
\leq C_{\xi}\|W^{(2)}(z)\|.
\nonumber
\end{equation}
After rotation or reflection, a representative lens is described by
$z=s+iv$, $s>0$, $0<v<q_0s$.
By \eqref{sec4:outerdbarbound} and \eqref{sec4:outerphase},
\begin{equation}\label{sec7:outerenvelope}
\|W^{(3)}(s+iv)\|\leq A_{\xi}(s)e^{-c_{\xi}tv},\qquad
A_{\xi}(s)=C_{\xi}\left(|\rho_0'(s)|+
\frac{|\rho_0(s)|}{s}\right).
\end{equation}
For the operator estimate, the second bound in
\eqref{sec4:outerdbarbound} yields
\begin{equation}
\int_0^\infty\int_{v/q_0}^\infty
\frac{\|W^{(3)}(s+iv)\|}{|s+iv-\lambda|}\,ds\,dv
\leq C_{\xi}\bigl(I_1(\lambda)+I_2(\lambda)\bigr),
\nonumber
\end{equation}
where
\begin{equation}\nonumber
I_1(\lambda)=\int_0^\infty\int_{v/q_0}^\infty
\frac{|\rho_0'(s)|e^{-c_{\xi}tv}}{|s+iv-\lambda|}\,ds\,dv,
I_2(\lambda)=\int_0^\infty\int_{v/q_0}^\infty
\frac{|s+iv|^{-1/2}e^{-c_{\xi}tv}}{|s+iv-\lambda|}\,ds\,dv.
\end{equation}
Write $\lambda=x_0+iy_0$. The $L^2$ norm of the kernel satisfies
\begin{equation}
\bigl\||s+iv-\lambda|^{-1}\bigr\|_{L^2(v/q_0,\infty)}
\leq\left(\int_{\mathbb R}
\frac{ds}{(s-x_0)^2+(v-y_0)^2}\right)^{1/2}
=\sqrt\pi\,|v-y_0|^{-1/2}.
\nonumber
\end{equation}
With $V=tv$ and $Y=ty_0$,
\begin{equation}\label{sec7:outerkernelintegral}
\sup_{y_0\in\mathbb R}\int_0^\infty
e^{-c_{\xi}tv}|v-y_0|^{-1/2}\,dv
=t^{-1/2}\sup_{Y\in\mathbb R}\int_0^\infty
e^{-c_{\xi}V}|V-Y|^{-1/2}\,dV
\leq C_{\xi}t^{-1/2}.
\end{equation}
Indeed, the integral after scaling is bounded uniformly in $Y$ by
\begin{equation}\label{sec7:translatedKernelBound}
\int_0^\infty e^{-c_{\xi}V}|V-Y|^{-1/2}\,dV
\leq\int_{\substack{V>0\\|V-Y|<1}}|V-Y|^{-1/2}\,dV
+\int_0^\infty e^{-c_{\xi}V}\,dV
\leq4+c_{\xi}^{-1}.
\end{equation}
Cauchy--Schwarz in $s$ therefore gives
\begin{equation}
\begin{aligned}
I_1(\lambda)
&\leq\int_0^\infty
\|\rho_0'\|_{L^2(\mathbb R_+)}
\bigl\||s+iv-\lambda|^{-1}\bigr\|_{L^2(v/q_0,\infty)}
e^{-c_{\xi}tv}\,dv\\
&\leq C_{\xi}\int_0^\infty|v-y_0|^{-1/2}e^{-c_{\xi}tv}\,dv
\leq C_{\xi}t^{-1/2}.
\end{aligned}
\nonumber
\end{equation}

For $I_2$, fix $p>2$ and $q=p/(p-1)$. The two norms in H\"older's
inequality obey
\begin{equation}
\begin{aligned}
\bigl\||s+iv|^{-1/2}\bigr\|_{L^p(v/q_0,\infty)}
&\leq\left(\int_{v/q_0}^\infty s^{-p/2}\,ds\right)^{1/p}
=C_{p,q_0}v^{1/p-1/2},\\
\bigl\||s+iv-\lambda|^{-1}\bigr\|_{L^q(v/q_0,\infty)}
&\leq\left(\int_{\mathbb R}
\frac{ds}{((s-x_0)^2+(v-y_0)^2)^{q/2}}\right)^{1/q}
=C_q|v-y_0|^{-1/p}.
\end{aligned}
\nonumber
\end{equation}
All slice estimates are used for $v\ne y_0$, which is sufficient
for the area integrals. Consequently,
\begin{equation}
\begin{aligned}
I_2(\lambda)
&\leq\int_0^\infty
\bigl\||s+iv|^{-1/2}\bigr\|_{L^p(v/q_0,\infty)}
\bigl\||s+iv-\lambda|^{-1}\bigr\|_{L^q(v/q_0,\infty)}
e^{-c_{\xi}tv}\,dv\\
&\leq C_{\xi,p}\int_0^\infty
v^{1/p-1/2}|v-y_0|^{-1/p}e^{-c_{\xi}tv}\,dv\\
&=C_{\xi,p}t^{-1/2}\int_0^\infty
V^{1/p-1/2}|V-Y|^{-1/p}e^{-c_{\xi}V}\,dV
\leq C_{\xi,p}t^{-1/2}.
\end{aligned}
\nonumber
\end{equation}
For the last inequality, use
\begin{equation}
V^{1/p-1/2}|V-Y|^{-1/p}
\leq\left(1-\frac2p\right)V^{-1/2}
+\frac2p|V-Y|^{-1/2},
\nonumber
\end{equation}
followed by \eqref{sec7:translatedKernelBound} and
$\int_0^\infty V^{-1/2}e^{-c_{\xi}V}\,dV=\sqrt{\pi/c_{\xi}}$.
The bounds for $I_1$ and $I_2$ are uniform in $\lambda$.
Rotation and reflection preserve arclength and area, so summing the
finitely many component estimates proves
\eqref{outerCauchyGreenoperator}.
\end{proof}

\begin{corollary}\label{cor:sec7:exteriorSolvability}
For the fixed $\xi$ and all sufficiently large $t$, the operator
$(I-\mathcal C_W)^{-1}$ exists on $L^\infty$ and
Problem~\ref{sec7:pureproblem} has a unique solution.
\end{corollary}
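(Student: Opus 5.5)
The argument is a Neumann-series inversion built on Lemma~\ref{lem:sec7:outerCG}, followed by a check that the resulting bounded function really solves Problem~\ref{sec7:pureproblem}. By \eqref{outerCauchyGreenoperator} there is $t_0=t_0(\xi)$ such that $\|\mathcal C_W\|_{L^\infty\to L^\infty}\leq C_\xi t^{-1/2}\leq \tfrac12$ for $t\geq t_0$. The series $\sum_{k\geq0}\mathcal C_W^k$ then converges in operator norm and defines $(I-\mathcal C_W)^{-1}$ with norm at most $2$. We set $M^{(3)}:=(I-\mathcal C_W)^{-1}I$, which is the unique bounded solution of \eqref{puredbarintegralequation}, and we record the bound $\|M^{(3)}-I\|_{L^\infty}\leq 2\|\mathcal C_W I\|_{L^\infty}\leq C_\xi t^{-1/2}$ for later use.

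Next we verify that this integral-equation solution has the properties listed in Problem~\ref{sec7:pureproblem}. Recall from \eqref{sec7:outerenvelope} that $W^{(3)}$ is supported in the lenses, that it is locally bounded away from the origin, and that it is integrable against $|\lambda-z|^{-1}$ uniformly in $\lambda$; this is exactly what the estimates for $I_1$ and $I_2$ in the proof of Lemma~\ref{lem:sec7:outerCG} give. Set $g=M^{(3)}W^{(3)}$. Since $M^{(3)}$ is bounded, the same estimates show $g\in L^1_{\rm loc}$, and $g$ is also integrable against the Cauchy kernel. The function $\frac1\pi\iint g(z)(\lambda-z)^{-1}dA(z)$ is therefore continuous in $\lambda$. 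Indeed, a kernel of the form $|\lambda-z|^{-1}$ integrated against such a $g$ is uniformly integrable in $\lambda$, because the H\"older splitting used for $I_2$ gives equi-integrability. Its distributional $\bar\partial$ equals $g$, since $\bar\partial\frac{1}{\pi(\lambda-z)}=\delta_z$. This yields \eqref{sec4:puredbar}, continuity on $\mathbb C$, and the absence of jumps and poles. Local integrability of the first weak derivatives follows because the $\partial$-derivative is a Beurling transform of $g\in L^p_{\rm loc}$ for some $p>1$. For the behaviour at infinity, $W^{(3)}$ decays rapidly along the lenses by the flatness and decay of $\rho_0$ in Lemma~\ref{lem:sec4:outerextension}, combined with the exponential factor. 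Hence $\iint\|g\|\,dA<\infty$, and expanding $(\lambda-z)^{-1}$ gives $M^{(3)}=I+O(\lambda^{-1})$.

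Uniqueness comes from the converse direction. Any solution of Problem~\ref{sec7:pureproblem} is bounded, being continuous and tending to $I$ at infinity. The generalized Liouville argument applies to $M^{(3)}$ minus the Cauchy--Green integral of $M^{(3)}W^{(3)}$: this difference is $\bar\partial$-free, continuous, and tends to $I$ at infinity, so it is entire and hence identically $I$. Thus every solution satisfies \eqref{puredbarintegralequation}, and invertibility of $I-\mathcal C_W$ on $L^\infty$ forces equality with the solution constructed above.

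The step I expect to be most delicate is the regularity near $\lambda=0$, where $|\bar\partial R_j|$ may behave like $|\lambda|^{-1/2}$. There the local $L^p$ membership of $g$ must be checked for some $p>1$. This holds because $|z|^{-1/2}\in L^p_{\rm loc}$ for all $p<4$ on the wedge, which suffices for both continuity and the weak-derivative claim.
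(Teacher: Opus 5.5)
Your proposal is correct and follows the paper's route: the paper treats this corollary as an immediate Neumann-series consequence of the operator bound \eqref{outerCauchyGreenoperator} in Lemma~\ref{lem:sec7:outerCG}, for large $t$ at fixed $\xi$. Your extra checks make explicit what the paper leaves implicit, namely continuity and the distributional identity $\bar\partial M^{(3)}=M^{(3)}W^{(3)}$ for the Cauchy--Green integral, the $O(\lambda^{-1})$ behaviour at infinity, and the Liouville argument showing that every solution of Problem~\ref{sec7:pureproblem} satisfies the integral equation, which gives uniqueness.
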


The reconstruction formula requires the value at $\lambda=0$.
Evaluating \eqref{puredbarintegralequation} gives
\begin{equation}\label{sec7:originValueIntegral}
M^{(3)}(0)
=I-\frac1\pi\iint_{\mathbb C}
\frac{M^{(3)}(z)W^{(3)}(z)}{z}\,dA(z).
\end{equation}

\begin{proposition}\label{prop:sec7:exteriorContribution}
For each fixed $\xi$ with $|\xi|>1$ and all sufficiently large $t$,
\begin{equation}\label{outsidelightconedbar}
M^{(3)}(0)=I+O(t^{-1}).
\end{equation}
\end{proposition}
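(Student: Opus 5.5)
Starting from \eqref{sec7:originValueIntegral}, I will show that
\[
\frac1\pi\iint_{\mathbb C}\frac{\|M^{(3)}(z)W^{(3)}(z)\|}{|z|}\,dA(z)=O(t^{-1}).
\]
Corollary~\ref{cor:sec7:exteriorSolvability} together with Lemma~\ref{lem:sec7:outerCG} gives $\|M^{(3)}\|_{L^\infty}\le(1-C_\xi t^{-1/2})^{-1}\le 2$ for large $t$. As in the proof of Lemma~\ref{lem:sec7:outerCG}, we also have $\|W^{(3)}\|\le C_\xi\|W^{(2)}\|$, since $M^R=M^r$ by Remark~\ref{rem:sec4:exterior-model} and $M^r$ has uniformly bounded conditioning on the lenses by Proposition~\ref{prop:sec5:discretecomparison} and Lemma~\ref{lem:sec5:finitecriterion}. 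After rotation and reflection it therefore suffices to bound, for the representative lens $\{s+iv:s>0,\ 0<v<q_0 s\}$,
\[
J=\int_0^\infty\!\!\int_{v/q_0}^\infty \frac{A_\xi(s)\,e^{-c_\xi t v}}{|s+iv|}\,ds\,dv ,
\]
with the envelope \eqref{sec7:outerenvelope}. The point of the argument is that here the kernel $|z|^{-1}$ is centered at the origin and not at an arbitrary $\lambda$. We no longer need a uniform supremum over $\lambda$, so we can use the decay of $A_\xi$ at zero and infinity. In the exterior region no interval cut-off of $\rho_0$ appears. This holds even for $\xi<-1$, where $I(\xi)=\mathbb R$, since then $\rho_0$ is just a fixed Schwartz function flat at $0$.

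Since $v<q_0 s$, we have $|s+iv|\ge s$. Exchanging the order of integration (the $v$-range is $0<v<q_0 s$) gives
\[
J\le\int_0^\infty \frac{A_\xi(s)}{s}\int_0^{q_0 s}e^{-c_\xi t v}\,dv\,ds
\le \frac{1}{c_\xi t}\int_0^\infty \frac{A_\xi(s)}{s}\,ds .
\]
It remains to show that $\int_0^\infty s^{-1}\bigl(|\rho_0'(s)|+|\rho_0(s)|/s\bigr)\,ds<\infty$. This follows from the flatness of $\rho_0$ at zero, assumed in Theorem~\ref{thm:soliton-resolution} and used in Lemma~\ref{lem:sec4:outerextension}: $|\rho_0(s)|+|\rho_0'(s)|\le C s^2$ near $0$, so the integrand is $O(1)$ there. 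The rapid decay at infinity makes the tail integrable. This requires only the first bound in \eqref{sec4:outerdbarbound}, not the weaker $|\lambda|^{-1/2}$ majorant, which would lose integrability at zero. Summing the six components gives $M^{(3)}(0)-I=O(t^{-1})$.

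The main obstacle is justifying the pointwise envelope near $s=0$, with $\bar\partial$ of the angular cutoff $\chi$ costing $|s|^{-1}$. Two facts cover it: (i) the $T$-ratios are bounded on the lenses near zero, since $T(0)=I$ and the lenses avoid the pole disks; (ii) the phase bound \eqref{sec4:outerphase}, $|\Im\theta_{12}|\ge c\sin\phi(\ell+\ell^{-1})\ge c'v$, holds uniformly down to $\lambda=0$, and is in fact stronger there. The factor $\ell^{-1}$ would even allow a better exponent near zero, but the crude bound $e^{-c tv}$ already suffices. A secondary point is the boundary case $|\xi|=1$. For example, at $\xi=1$ the term $(1-\xi)\lambda^{-1}$ vanishes and the phase bound degenerates to $c\,v$ without the $\ell^{-1}$ gain. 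This is consistent with the statement requiring only $|\xi|>1$, and the estimate above uses only $e^{-ctv}$, so it would survive anyway.
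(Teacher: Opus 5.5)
Your proposal is correct and follows the paper's proof essentially step for step. You bound $\|M^{(3)}\|_{L^\infty}$ via the solvability corollary, reduce to a representative lens with the envelope \eqref{sec7:outerenvelope}, use $|s+iv|\ge s$ and Tonelli to extract $1/(c_\xi t)$ from $\int_0^{q_0 s}e^{-c_\xi tv}\,dv$, and conclude from the integrability of $s^{-1}|\rho_0'(s)|+s^{-2}|\rho_0(s)|$, which follows from flatness at zero and decay at infinity. One small correction: near zero you only need $\rho_0(0)=\rho_0'(0)=0$, i.e.\ $|\rho_0'(s)|\le Cs$ and $|\rho_0(s)|\le Cs^2$, which is exactly what the paper uses. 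Your stronger claim $|\rho_0'(s)|\le Cs^2$ is not required.
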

\begin{proof}
By \eqref{sec7:originValueIntegral} and
Corollary~\ref{cor:sec7:exteriorSolvability},
\begin{equation}\label{sec7:originMatrixEstimate}
\|M^{(3)}(0)-I\|
\leq\frac{\|M^{(3)}\|_{L^\infty}}\pi
\iint_{\mathbb C}\frac{\|W^{(3)}(z)\|}{|z|}\,dA(z)
\leq C_{\xi}\iint_{\mathbb C}\frac{\|W^{(3)}(z)\|}{|z|}\,dA(z).
\end{equation}
In the representative lens we use the first bound in
\eqref{sec4:outerdbarbound}. Equation \eqref{sec7:outerenvelope} gives
\begin{equation}\label{sec7:exteriorValueEstimate}
\int_0^\infty\int_{v/q_0}^\infty
\frac{\|W^{(3)}(s+iv)\|}{|s+iv|}\,ds\,dv
\leq C_{\xi}(I_3+I_4),
\end{equation}
where
\begin{equation}
I_3=\int_0^\infty\int_{v/q_0}^\infty
\frac{|\rho_0'(s)|}{|s+iv|}e^{-c_{\xi}tv}\,ds\,dv,
I_4=\int_0^\infty\int_{v/q_0}^\infty
\frac{|\rho_0(s)|}{s|s+iv|}e^{-c_{\xi}tv}\,ds\,dv.
\nonumber
\end{equation}
Flatness gives $\rho_0(0)=\rho_0'(0)=0$. In particular, for $0<s\leq1$,
\begin{equation}
\begin{aligned}
|\rho_0'(s)|
&\leq\int_0^s|\rho_0''(u)|\,du
\leq s\|\rho_0''\|_{L^\infty(0,1)},\\
|\rho_0(s)|
&\leq\int_0^s(s-u)|\rho_0''(u)|\,du
\leq\frac{s^2}{2}\|\rho_0''\|_{L^\infty(0,1)}.
\end{aligned}
\nonumber
\end{equation}
For $I_3$, Tonelli's theorem and $|s+iv|\geq s$ yield
\begin{equation}
\begin{aligned}
I_3
&\leq\int_0^\infty\frac{|\rho_0'(s)|}{s}
\left(\int_0^{q_0s}e^{-c_{\xi}tv}\,dv\right)ds=\frac1{c_{\xi}t}\int_0^\infty\frac{|\rho_0'(s)|}{s}
\left(1-e^{-c_{\xi}tq_0s}\right)ds\\
&\leq\frac1{c_{\xi}t}\left(
\int_0^1\frac{|\rho_0'(s)|}{s}\,ds
+\int_1^\infty\frac{|\rho_0'(s)|}{s}\,ds\right)\leq\frac1{c_{\xi}t}\left(
\|\rho_0''\|_{L^\infty(0,1)}
+\|\rho_0'\|_{L^1(1,\infty)}\right)
\leq C_{\xi}t^{-1}.
\end{aligned}
\nonumber
\end{equation}
The second integral is estimated by
\begin{equation}
\begin{aligned}
I_4
&\leq\int_0^\infty\frac{|\rho_0(s)|}{s^2}
\left(\int_0^{q_0s}e^{-c_{\xi}tv}\,dv\right)ds=\frac1{c_{\xi}t}\int_0^\infty\frac{|\rho_0(s)|}{s^2}
\left(1-e^{-c_{\xi}tq_0s}\right)ds\\
&\leq\frac1{c_{\xi}t}\left(
\int_0^1\frac{|\rho_0(s)|}{s^2}\,ds
+\int_1^\infty\frac{|\rho_0(s)|}{s^2}\,ds\right)\leq\frac1{c_{\xi}t}\left(
\frac12\|\rho_0''\|_{L^\infty(0,1)}
+\|\rho_0\|_{L^1(1,\infty)}\right)
\leq C_{\xi}t^{-1}.
\end{aligned}
\nonumber
\end{equation}
All norms on the right are bounded by the continuous-data hypotheses.
The same estimates apply to the other components, since rotations
and reflection preserve $|z|$ and $dA(z)$. Thus
\eqref{sec7:exteriorValueEstimate} gives
$\iint_{\mathbb C}|z|^{-1}\|W^{(3)}(z)\|\,dA(z)=O(t^{-1})$;
substitution in \eqref{sec7:originMatrixEstimate} proves
\eqref{outsidelightconedbar}.
\end{proof}

\subsection{In the region $\xi\in(-1,1)$}
\label{sec7:interior}

The quadratic phase estimate \eqref{sec4:quadraticdecay} gives the
corresponding bounds near the six stationary points.

\begin{proposition}\label{prop:dbar-contribution}
For each fixed $|\xi|<1$, there is $T_{\xi}>0$ such that, for $t>T_{\xi}$,
Problem~\ref{sec7:pureproblem} has a unique solution satisfying
\begin{equation}
M^{(3)}(0)=I+O(t^{-3/4}),
\nonumber
\end{equation}
\end{proposition}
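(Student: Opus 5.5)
The proof follows the exterior argument of Lemma~\ref{lem:sec7:outerCG} and Proposition~\ref{prop:sec7:exteriorContribution}, with the stationary-phase geometry of the interior lenses replacing the uniform angular decay. There are three steps: an operator bound for $\mathcal C_W$ on $L^\infty$ of order $t^{-1/4}$, which gives unique solvability by a Neumann series; a uniform bound on $M^{(3)}$; and an estimate of order $t^{-3/4}$ for the origin integral \eqref{sec7:originValueIntegral}.

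\emph{Conjugation factor.} By \eqref{sec4:purefactorization}, Lemma~\ref{lem:sec5:finitecriterion}, Lemma~\ref{lem:sec5:discreteerror}, Lemma~\ref{lem:sec6:PCcomparison} and the small-norm bounds for $E$, the products $M^R$ and $(M^R)^{-1}$ are uniformly bounded on the lens supports. Inside the disks $U$ this uses the boundedness of the local solutions stated in Lemma~\ref{lem:sec6:PCcomparison}. Hence \eqref{sec7:conjugatedSource} and \eqref{sec4:W2decay} give
\[
\|W^{(3)}\|\leq C|\bar\partial R_j|\,e^{-t|\Im\theta_{12}(z)|}.
\]
The term $|\bar\partial\mathcal X_{\mathcal P}|$ in \eqref{sec4:innerDPbound} vanishes on the lenses, because the lenses lie in the zero set of $\mathcal X_{\mathcal P}$. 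Away from the six stationary points, the sign property \eqref{sec4:innerphasesigns} together with \eqref{Imtheta12} gives decay at least $e^{-ctv}$ on compact parts of the lenses. Near $0$ and $\infty$ the decay comes from the flatness and rapid decrease of $\rho_0$, and those pieces are handled exactly as in the exterior proof.

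\emph{Localized estimates.} After rotation and reflection, we work in the representative component $z=a+u+iv$ with $0<v<q_0|u|$, and we use \eqref{sec4:quadraticdecay}, namely $|\Im\theta_{12}|\geq c\,v|u|\geq c'v^2$. We split $W^{(3)}$ according to \eqref{sec4:innerDPbound} into a $|\rho_0'|$ part and a $|z-a|^{-1/2}$ part.

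For the operator norm we repeat the slice estimates of Lemma~\ref{lem:sec7:outerCG}, with $e^{-ctv}$ replaced by $e^{-ctv^2}$:
\begin{itemize}
\item The $\rho_0'$ part, by Cauchy--Schwarz in $u$, gives $\int_0^\infty|v-y_0|^{-1/2}e^{-ctv^2}dv=O(t^{-1/4})$.
\item The singular part, by H\"older with $p>2$, gives $\int v^{1/p-1/2}|v-y_0|^{-1/p}e^{-ctv^2}dv=O(t^{-1/4})$ after the scaling $v=t^{-1/2}V$.
\end{itemize}
Thus $\|\mathcal C_W\|_{L^\infty\to L^\infty}\leq Ct^{-1/4}$, which yields existence, uniqueness and $\|M^{(3)}\|_{L^\infty}\leq 2$ for $t>T_\xi$.

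\emph{Origin value.} Since $|z|\geq a/2$ near the stationary points, the kernel $1/|z|$ is bounded there. So it suffices to bound $\iint\|W^{(3)}\|\,dA$ over each local lens.
\begin{itemize}
\item $\rho_0'$ part: Cauchy--Schwarz in $u$ over $u>v/q_0$, using $e^{-ctv|u|}$, gives $\|\rho_0'\|_2\,\|e^{-ctvu}\|_{L^2(u>v/q_0)}\lesssim (tv)^{-1/2}e^{-ctv^2/q_0}$. Integrating in $v$ then gives $t^{-1/2}\int v^{-1/2}e^{-ctv^2}dv=O(t^{-3/4})$.
\item $|z-a|^{-1/2}$ part: H\"older in $u$ with $\|\,|u|^{-1/2}\|_{L^p(u>v)}\sim v^{1/p-1/2}$ and $\|e^{-ctvu}\|_{L^q(u>v)}\sim (tv)^{-1/q}e^{-ctv^2}$, followed by the substitution $v=t^{-1/2}V$, also gives $O(t^{-3/4})$.
\end{itemize}
The regions near $0$ and far from the stationary points contribute $O(t^{-1})$, by the argument of Proposition~\ref{prop:sec7:exteriorContribution}, and the exponentially decaying pieces contribute $O(e^{-ct})$. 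Combining with $\|M^{(3)}\|_\infty\leq2$ yields $M^{(3)}(0)=I+O(t^{-3/4})$.

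The main obstacle is the first step: controlling $\|M^R\|$ and $\|(M^R)^{-1}\|$ uniformly inside the disks $U_p$, where the factor $M^{lo}$ is involved. The plan is to invoke Lemma~\ref{lem:sec6:PCcomparison} for the parabolic-cylinder model and to note that the analytic factors $\mathcal Q_{i,\ell}$ are $I+O(t^{-1/2})$. The H\"older bookkeeping near $v\to0$ in the singular term also needs care, since the exponents must be chosen so that every $v$-integral converges.
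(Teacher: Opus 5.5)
Your proposal is correct and follows the route the paper intends. The paper gives no separate proof here, only the remark that the quadratic estimate \eqref{sec4:quadraticdecay} replaces the uniform decay in the exterior arguments of Lemma~\ref{lem:sec7:outerCG} and Proposition~\ref{prop:sec7:exteriorContribution}. Your $O(t^{-1/4})$ bound for $\mathcal C_W$ and your $O(t^{-3/4})$ local area estimate for the origin value (H\"older with $2<p<4$, the kernel $1/|z|$ bounded near $\mathcal S(\xi)$, and flatness near zero) are exactly that adaptation, with the boundedness of $M^R$ and $(M^R)^{-1}$ inside $U$ supplied by Lemma~\ref{lem:sec6:PCcomparison} and $\mathcal Q_{i,\ell}=I+O(t^{-1/2})$, as you indicate.
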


\section{long-time asymptotic result for the Tzitz\'eica equation}
\label{s:reconstruction-final}

\begin{theorem}
\label{thm:soliton-resolution}
\label{cor:interior-lightcone-transition}
Let $u(x,t)$ solve the Cauchy problem \eqref{TZI} with real initial data
$u_0,u_1\in\mathcal S(\mathbb R)$ and scattering data
$\{r(\lambda),\{(\zeta_n,c_n)\}_{n=1}^{12N}\}$. Let $u_{\mathrm{sol}}^\Lambda(x,t,\widetilde{\mathcal D}^{\Lambda}(\xi))=\log q^\Lambda(x,t,\widetilde{\mathcal D}^{\Lambda}(\xi))$ be the reconstruction of
the reflectionless $N(\Lambda)$-soliton model in
Corollary~\ref{prop:sec5:soliton-interpretation}, where
$N(\Lambda)=|\Lambda|$ and
$\widetilde{\mathcal D}^{\Lambda}(\xi)
=\{(\zeta_n,\widetilde c_n^\Lambda):n\in\Lambda\}$, with
$\widetilde c_n^\Lambda$ given by \eqref{modifiednormingconstant}. Then the solution of the Tzitz\'eica equation \eqref{TZI} as $t \to +\infty$ in the different regions can be described as follows:

\begin{enumerate}[(i)]
\item For the regions $|\xi|\ge1$ ($\xi:=x/t$),
\begin{equation}\label{finalOutsideAsymptotics}
u(x,t)=O(t^{-1}).
\end{equation}
\equationlabels{\label{finalOutsideDecay}}
\item For the region $-1<\xi<1$,
\begin{equation}\label{finalInsideAsymptotics}
u(x,t)=u_{\mathrm{sol}}^\Lambda(x,t;\xi)
+\log\!\left(1+\frac{\mathcal U_{\mathrm{rad}}^\Lambda(x,t;\xi)}{\sqrt t}\right)+O(t^{-3/4}),
\end{equation}
where $\mathcal U_{\mathrm{rad}}^\Lambda$ is the real, bounded
coefficient in \eqref{scalarDressedRadiation} and  $u_{\mathrm{sol}}^\Lambda(x,t;\xi)$ is given by \eqref{sec5:solitonreconstruction}
\item For the regions $1-\varepsilon<|\xi|<1$,
\begin{equation}\label{interiorLightconeTransition}
u(x,t)=\log\!\left(1+
\frac{\mathcal U_{\mathrm{rad}}^{\varnothing}(x,t;\xi)}{\sqrt t}\right)
+O(t^{-3/4}),
\end{equation}
where $\mathcal U_{\mathrm{rad}}^{\varnothing}$ is given by
\eqref{scalarDressedRadiation} with $M^\Lambda=I$.
\end{enumerate}
\end{theorem}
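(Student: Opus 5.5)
The proof reverses the chain of transformations at the reconstruction point $\lambda=0$ and inserts the estimates of Sections~\ref{s:discrete-contribution}--\ref{s:dbar-contribution}. First, the transformations are undone. From \eqref{sec4:M2transformation} and \eqref{sec4:M3definition},
\[
M=M^{(3)}M^R(\mathcal R^{(2)})^{-1}T^{-1}\mathcal G^{-1}.
\]
Near zero we have $\mathcal G=I$ and $T(0)=I$, and $\mathcal R^{(2)}=I+O(|\lambda|^L)$ by \eqref{sec4:Rnormalizations}. Hence
\[
M(0)=M^{(3)}(0)M^R(0).
\]
Since $0\notin U$, \eqref{sec4:purefactorization} gives $M^R(0)=E(0)M^r(0)$. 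Proposition~\ref{prop:sec5:discretecomparison} then gives $M^r(0)=(I+O(e^{-\delta_0t}))M^\Lambda(0)$. Altogether,
\[
(\omega,\omega^2,1)M(0)e_3=\boldsymbol\ell\,M^{(3)}(0)E(0)\bigl(I+O(e^{-\delta_0 t})\bigr)M^\Lambda(0)e_3 .
\]
Because $M(\lambda)\to G$ and $\boldsymbol\ell G=e^u\boldsymbol\ell$, the reconstruction \eqref{TzitzeicaReconstructionFormula} reads $e^{u}=\boldsymbol\ell M(0)e_3$. It will also be convenient to use $\boldsymbol\ell M^\Lambda(0)=q^\Lambda\boldsymbol\ell$, which holds since $M^\Lambda(0)$ plays the role of $G$ for the soliton model and $\boldsymbol\ell$ is its left eigenvector. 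If this eigenrelation is not available directly, one keeps the full row vector $\boldsymbol\ell E(0)M^\Lambda(0)e_3$ and defines $\mathcal U_{\rm rad}^\Lambda$ from it.

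\emph{Exterior region $|\xi|\ge1$.} By Remark~\ref{rem:sec4:exterior-model}, $E=I$ and $M^R=M^r$. Proposition~\ref{prop:sec5:discretecomparison} applies with $\Lambda=\varnothing$: for $|\xi|>1$ we have $|v_n|<1$, so no orbit is retained, and hence $M^\Lambda=I$. Proposition~\ref{prop:sec7:exteriorContribution} gives $M^{(3)}(0)=I+O(t^{-1})$. Therefore
\[
e^u=\boldsymbol\ell e_3+O(t^{-1})=1+O(t^{-1}),
\]
and taking the logarithm yields \eqref{finalOutsideAsymptotics}. The boundary rays $|\xi|=1$ are handled by the same exterior lenses. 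There the phase bound \eqref{sec4:outerphase} degenerates on one side, so I would verify it separately using $\Im\theta_{12}=\tfrac{\sqrt3}{2}(1+\xi)\Im\lambda$ at $\xi=1$, and symmetrically at $\xi=-1$.

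\emph{Interior region $|\xi|<1$.} Proposition~\ref{prop:pureRHerror-expansion} gives $E(0)=I-t^{-1/2}\sum_p\mathcal K_p/p+O(t^{-1})$, and Proposition~\ref{prop:dbar-contribution} gives $M^{(3)}(0)=I+O(t^{-3/4})$. Combining these,
\[
e^u=q^\Lambda+t^{-1/2}\,\boldsymbol\ell\,\mathcal H^{(0)}(0)M^\Lambda(0)e_3+O(t^{-3/4}).
\]
I would define
\[
\mathcal U_{\rm rad}^\Lambda:=\frac{\boldsymbol\ell\,\mathcal H^{(0)}(0)M^\Lambda(0)e_3}{q^\Lambda},
\]
which is the formula referred to as \eqref{scalarDressedRadiation}. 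It is bounded by ray-admissibility together with $|\beta_{12}|^2=\nu$. It is real by the reductions \eqref{radiationcoefficientreductions}: summing over the orbit $\pm\omega^\ell a$ pairs each term with its conjugate. Factoring out $q^\Lambda$ and taking the logarithm (valid once $q^\Lambda\ge c_\xi>0$) gives \eqref{finalInsideAsymptotics}. Part (iii) is the same computation with $\Lambda=\varnothing$. For $1-\varepsilon<|\xi|<1$ one has $\lambda_0$ near $0$ or $\infty$, and choosing $\varepsilon$ smaller than $\min_n(1-|v_n|)$ makes every orbit strictly growing or decaying, so $\Lambda=\varnothing$ and $M^\Lambda=I$.

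\textbf{Main obstacle.} The main obstacle is Proposition~\ref{prop:dbar-contribution}, the $O(t^{-3/4})$ bound for $M^{(3)}(0)$. I would prove it as in Lemma~\ref{lem:sec7:outerCG}. First, bound $\|\mathcal C_W\|_{L^\infty\to L^\infty}=O(t^{-1/4})$ using \eqref{sec4:innerDPbound} and \eqref{sec4:quadraticdecay}. This is done by splitting $|\bar\partial R_j|$ into the $|\rho_0'|$ part, treated with Cauchy--Schwarz against $e^{-ctv(s-a)}$, and the $|\lambda-p|^{-1/2}$ part, treated with H\"older's inequality with $p>2$. Second, bound the value integral $\iint|W^{(3)}|/|z|\,dA=O(t^{-3/4})$ by the same splitting, using that $|z|$ stays away from zero near each stationary point. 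A secondary concern is uniformity of all these constants as $\xi\to\pm1$ in part (iii), where the stationary points approach $0$ or $\infty$. There I would rely on flatness of $r$ at both endpoints to keep the parabolic-cylinder coefficients and the $\bar\partial$ constants bounded.
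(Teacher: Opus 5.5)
Parts (i) and (ii) of your proposal follow the paper's proof. You use the same factorization $M(0)=M^{(3)}(0)E(0)M^{\mathrm{err}}(0)M^\Lambda(0)$. Outside the light cone, $\Lambda=\varnothing$ because every $v_n\in(-1,1)$. In the interior, you combine the expansion of $E(0)$ with $M^{(3)}(0)=I+O(t^{-3/4})$. Your $\mathcal U^\Lambda_{\rm rad}$ coincides with \eqref{scalarDressedRadiation}, since $\mathcal H^{(0)}(0)=-\mathfrak K$. The eigenrelation $\boldsymbol\ell M^\Lambda(0)=q^\Lambda\boldsymbol\ell$ is never actually used, which is fortunate because the paper does not establish it. Your reality argument via $\mathcal K_{\bar p}=\mathcal B\overline{\mathcal K_p}\mathcal B^{-1}$, $\boldsymbol\ell\mathcal B=\overline{\boldsymbol\ell}$ and $\mathcal Be_3=e_3$ is a correct supplement to what the paper only asserts. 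Your extra check at $|\xi|=1$ also goes beyond the paper, whose proof of (i) takes $|\xi|\ne1$.

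The gap is in part (iii). As you set it up, (iii) is just (ii) with $\Lambda=\varnothing$ at a fixed $\xi$, which (ii) already contains. What (iii) adds, per Remark~\ref{rem:transition-regimes}, is uniformity on the strips $1-\varepsilon<|\xi|<1$, including $\xi=x/t\to\pm1$ as $t\to\infty$. Your choice of $\varepsilon$, taken with a uniform margin, does correctly give $\Lambda=\varnothing$ and uniformly $O(e^{-ct})$ pole-circle errors.

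Running the fixed-$\xi$ estimates uniformly, however, breaks down:
\begin{itemize}
\item The six stationary points $\omega^\ell a$ collapse onto the reconstruction point $0$ as $\xi\to1$, or escape to $\infty$ as $\xi\to-1$.
\item The disks $U_p$ must therefore shrink with $a$, and your step ``$|z|$ stays away from zero near each stationary point'' fails.
\item The coefficients $\mathcal K_p/p$ carry the factor $1/(\kappa_0 a)\asymp b^{-1/2}$, where $b=a/(1+a^2)$.
\end{itemize}
Keeping the parabolic-cylinder coefficients bounded does not address any of this. They are bounded anyway, since $|\beta_{12}|^2=\nu\le-\tfrac1{2\pi}\log d_*$.

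The missing idea is the rescaling $\lambda=a\mu$, under which $t\theta_{12}(a\mu)=\sqrt3\,tb\,(\mu+\mu^{-1})$. The geometry then becomes fixed, and the genuine large parameter is $\tau=\sqrt3\,tb$, not $t$. Flatness at $0$ and Schwartz decay at $\infty$ make the rescaled amplitudes and their one-sided $H^1$ norms $O(b^m)$ for every $m$.

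The argument then needs two regimes:
\begin{itemize}
\item For $\tau\ge1$, the local error is $O(|r(a)|/\tau)$ and the $\bar\partial$ error is $O(b^m\tau^{-3/4})$. These become $O(t^{-1})$ and $O(t^{-3/4})$ only because of the $b^m$ factors.
\item For $\tau\le1$, the parametrix gives no decay by itself. The bounds $M^R(0)-I=O(t^{-1})$ and $O(t^{-1})$ for the displayed radiation come instead from $b\le C/t$ together with $r(a)=O(b^m)$.
\end{itemize}
The same flatness gives the uniform bound $|\mathcal U^\varnothing_{\rm rad}|\le C|r(a)|/\sqrt b\le C$. Ray-admissibility and $|\beta_{12}|^2=\nu$ alone, which you invoke for boundedness, do not give this.
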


We recover $u(x,t)$ from the matrix asymptotics of Sections~5--7.
For parts \textup{(i)} and \textup{(ii)}, $\xi=x/t$ is fixed,
$|\xi|\ne1$, and the hypotheses are those of
Theorem~\ref{thm:soliton-resolution}. Part \textup{(iii)} is proved
separately below. For a $3\times3$ matrix $X$, write
\begin{equation}\label{reconstructionFunctional}
\mathfrak q[X]:=\boldsymbol\ell X\mathbf e_3,
\qquad \boldsymbol\ell=(\omega,\omega^2,1),
\qquad \mathbf e_3=(0,0,1)^T.
\end{equation}

By Corollary~\ref{prop:sec5:soliton-interpretation},
$M_{\mathrm{sol}}^\Lambda=M^\Lambda$ is the reflectionless
$N(\Lambda)$-soliton model for
$\widetilde{\mathcal D}^{\Lambda}(\xi)$.
We retain the internal notation of \eqref{sec5:retainedreconstruction}:
\begin{equation}\label{modifiedSolitonField}
\begin{aligned}
q_{\mathrm{sol}}^\Lambda(x,t;\xi)=q^\Lambda
&:=\mathfrak q[M^\Lambda(x,t,0;\xi)],\\
u_{\mathrm{sol}}^\Lambda(x,t;\xi)=u^\Lambda&:=\log q_{\mathrm{sol}}^\Lambda(x,t;\xi).
\end{aligned}
\end{equation}
The real logarithm is justified for sufficiently large $t$.

\begin{itemize}
\item \textbf{The exterior regions:
$\xi\in(-\infty,-1]\cup[1,+\infty)$.}
Undoing the transformations in Sections~\ref{s:3}--\ref{s:4} gives
\begin{equation}
M(\lambda)
=M^{(3)}(\lambda)M^R(\lambda)
\bigl(\mathcal R^{(2)}(\lambda)\bigr)^{-1}
T(\lambda)^{-1}\mathcal G(\lambda)^{-1}.
\nonumber
\end{equation}
At the reconstruction point, $T(0)=I$, $\mathcal G=I$ near zero,
and $\mathcal R^{(2)}(\lambda)\to I$. Since $0\notin U$, the
factorization at zero, valid in both regions, is
\begin{equation}\label{matrixfactorizationatZero}
\begin{aligned}
M(0)&=G(x,t)=M^{(2)}(0)=M^{(3)}(0)M^R(0)\\
&=M^{(3)}(0)E(0)M^r(0)\\
&=M^{(3)}(0)E(0)M^{\mathrm{err}}(0)M^\Lambda(0).
\end{aligned}
\end{equation}

Here $M^R=M^r$ and $E=I$ in the exterior regions. Proposition~
\ref{prop:sec7:exteriorContribution} and \eqref{discreteerrorzero}
therefore yield
\begin{equation}\label{sec8:outsidematrix}
M(0)=\bigl(I+O(t^{-1})\bigr)M^\Lambda(0),
\qquad x=\xi t.
\end{equation}
The reconstruction formula \eqref{TzitzeicaReconstructionFormula} reads
\begin{equation}
\label{reconstructionAtZero}
u(x,t)
=
\lim_{\lambda\to0}
\log
\left[
(\omega,\omega^2,1)
M(x,t,\lambda)
\right]_{13}.
\end{equation}
The complete discrete orbits have velocities
$v_n=(1-|\zeta_n|^2)/(1+|\zeta_n|^2)\in(-1,1)$. For $N>0$,
\eqref{discretephasevelocity} and the finiteness of the spectrum give
\begin{equation}
\min_{1\leq n\leq N}
|\Im\theta_{12}(\zeta_n;\xi)|>0.
\nonumber
\end{equation}
The choice of $\delta_0$ in \eqref{discretesplitting} therefore leaves
$\Lambda=\varnothing$, $M^\Lambda=I$ and $q^\Lambda=1$.
Thus \eqref{sec8:outsidematrix} and \eqref{reconstructionAtZero} give
$\mathfrak q[M(0)]=1+O(t^{-1})$ and $u(x,t)=O(t^{-1})$, proving
\eqref{finalOutsideAsymptotics}; the same conclusion holds when $N=0$.

\item \textbf{The interior region: $\xi\in(-1,1)$.}
Outside $U$, the inverse transformations now take the form
\begin{equation}
M(\lambda)=M^{(3)}(\lambda)E(\lambda)M^r(\lambda)
\bigl(\mathcal R^{(2)}(\lambda)\bigr)^{-1}
T(\lambda)^{-1}\mathcal G(\lambda)^{-1}.
\nonumber
\end{equation}
At $\lambda=0$, Proposition~\ref{prop:pureRHerror-expansion},
Proposition~\ref{prop:dbar-contribution} and
\eqref{discreteerrorzero} give
\begin{equation}\label{combinedmatrixatZero}
M(0)=\left[
I-\frac{1}{\sqrt t}
\sum_{p\in\mathcal S(\xi)}\frac{\mathcal K_p}{p}
+O(t^{-3/4})\right]M^\Lambda(0),
\qquad x=\xi t.
\end{equation}
The $O(t^{-1})$ local RH remainder and the exponentially small
discrete error are absorbed by the $O(t^{-3/4})$ area contribution.
Applying $\mathfrak q$ first gives
$\mathfrak q[M(0)]=q^\Lambda(1+z+\rho)$, where
$z=t^{-1/2}\mathcal U_{\mathrm{rad}}^\Lambda=O(t^{-1/2})$ and
$\rho=O(t^{-3/4})$ are real.
For large $t$, both $1+z$ and $1+z+\rho$ belong to $[1/2,3/2]$;
hence $|\log(1+z+\rho)-\log(1+z)|\leq2|\rho|$.
Using \eqref{reconstructionAtZero} without expanding $\log(1+z)$ yields
\begin{equation}\label{fieldReconstructionInside}
u(x,t)=u_{\mathrm{sol}}^\Lambda(x,t;\xi)
+\log\!\left(1+\frac{\mathcal U_{\mathrm{rad}}^\Lambda(x,t;\xi)}{\sqrt t}\right)
+O(t^{-3/4}),\qquad t\to+\infty,
\end{equation}

where
\begin{equation}\label{scalarDressedRadiation}
\mathcal U_{\mathrm{rad}}^\Lambda(x,t;\xi)
:=-\frac{\boldsymbol\ell\,\mathfrak K(x,t;\xi)
M^\Lambda(x,t,0;\xi)\mathbf e_3}{q^\Lambda(x,t;\xi)}.
\end{equation}
\begin{equation}
\mathfrak K(x,t;\xi)
:=\sum_{p\in\mathcal S(\xi)}\frac{\mathcal K_p(x,t;\xi)}{p},
\nonumber
\end{equation}
The six matrices $\mathcal K_p$ are specified by
\eqref{dressedradiationcoefficient}--\eqref{radiationcoefficientreductions}.
The coefficient \eqref{scalarDressedRadiation} is real and bounded
for all sufficiently large $t$ at the fixed $\xi$.
Equation \eqref{fieldReconstructionInside} proves
\eqref{finalInsideAsymptotics}, with the reconstructed modified
reflectionless $N(\Lambda)$-soliton model as its discrete leading term.

For $\Lambda=\varnothing$, one has $M^\Lambda=I$, $u_{\mathrm{sol}}^\varnothing=u^\Lambda=0$ and
\begin{equation}
\mathcal U_{\mathrm{rad}}^\varnothing
=-\boldsymbol\ell\mathfrak K\mathbf e_3.
\nonumber
\end{equation}
Thus \eqref{fieldReconstructionInside} reduces to
$u=\log(1+t^{-1/2}\mathcal U_{\mathrm{rad}}^\varnothing)+O(t^{-3/4})$,
retaining the logarithm of the six-stationary-point contribution.
\end{itemize}

Proposition~\ref{prop:sec5:discretecomparison} supplies the exponentially
small comparison with the retained complete orbits and the modified
norming constants in \eqref{modifiednormingconstant}. This proves
parts \textup{(i)} and \textup{(ii)}.

\begin{remark}\label{rem:transition-regimes}
The estimates for parts \textup{(i)} and \textup{(ii)} hold for each fixed
$|\xi|\ne1$. Their constants may
depend on $\xi$; by themselves they give no uniform assertion for ratios varying
with $t$ and approaching $\pm1$ or a discrete velocity. The threshold
in \eqref{discretesplitting} is chosen after fixing $\xi$, so no
additional pole-splitting interfaces are excluded. The uniform interior
transition in part \textup{(iii)} follows from the estimates below.
\end{remark}

\begin{proof}[Proof of Theorem~\ref{thm:soliton-resolution}\textup{(iii)}]
The finite velocities $v_n=(1-|\zeta_n|^2)/(1+|\zeta_n|^2)\in(-1,1)$ satisfy $|\xi-v_n|\geq c_0>0$ in sufficiently narrow endpoint strips.
By \eqref{discretephasevelocity}, $\Lambda=\varnothing$ and all pole-circle contributions are uniformly $O(e^{-ct})$, while $H_d$, $T$ and $\tau_a$ retain the discrete transmission phase.
Put $a=\sqrt{(1-\xi)/(1+\xi)}$, $b=a/(1+a^2)$ and $\tau=\sqrt3\,tb$.
Flatness at zero and Schwartz decay at infinity bound the regularized amplitudes and their one-sided $H^1$ norms in $\lambda/a$ by $E(a)=O_m(b^m)$ for every $m>0$.
For $\tau\geq1$, the exact-phase local models and the estimates of Sections~\ref{s:jump-contribution}--\ref{s:dbar-contribution}, retaining $E(a)$, give uniform inverses,
a pure-RH remainder $O(|r(a)|/\tau+e^{-ct})=O(t^{-1})$ and a $\bar\partial$ contribution $O(t^{-1}+E(a)\tau^{-3/4})=O(t^{-3/4})$.
For $\tau\leq1$, fixed-parameter parabolic-cylinder models and flatness, with $b\leq C/t$, give $M^R(0)-I=O(t^{-1})$ and the same area bound; the displayed radiation is also $O(t^{-1})$.
The same flatness gives
$|\mathcal U_{\mathrm{rad}}^\varnothing|\leq C|r(a)|/\sqrt b\leq C$
throughout the strips. Finally, \eqref{matrixfactorizationatZero} gives
$\mathfrak q[M(0)]=1+z+\rho$, with
$z=t^{-1/2}\mathcal U_{\mathrm{rad}}^\varnothing=O(t^{-1/2})$ and
$\rho=O(t^{-3/4})$ uniformly. Both scalars are real. For large $t$,
$|z|+|\rho|\leq1/2$, so
$|\log(1+z+\rho)-\log(1+z)|\leq2|\rho|$.
The reconstruction formula \eqref{reconstructionAtZero} proves
\eqref{interiorLightconeTransition} with the logarithm retained.
\end{proof}

\section*{Acknowledgments}\label{s:8} 	
    This work was supported by the National Natural Science Foundation of China under Grant No. 12371255, the Fundamental Research Funds for the Central Universities of CUMT under Grant No. 2024ZDPYJQ1003, and the Postgraduate Research \& Practice Program of Education \& Teaching Reform of CUMT under Grant No. 2025YJSJG031.\\

  \textbf{Data availibility}: The data which supports the findings of this study is available within the article.\\

\textbf{Conflict of interest}: The authors declare no conflict of interest.

\appendix
\section{The model RH problem}\label{app:model-rhp}

We use the parabolic-cylinder model in the normalization of
\cite[Appendix~A]{HWW2026}. Let
\begin{equation*}
X_j=e^{(2j-1)\pi i/4}\mathbb R_+,\qquad j=1,2,3,4,
\qquad X=\bigcup_{j=1}^4X_j,
\end{equation*}
with all rays oriented away from the origin, as in
Figure~\ref{fig:app:model-cross}. For $q\in\mathbb C$, $|q|<1$, put
\begin{equation*}
d=1-|q|^2,\qquad \nu=\nu(q)=-\frac1{2\pi}\log d.
\end{equation*}
Throughout this appendix,
$\zeta^{\pm2i\nu}=\exp(\pm2i\nu\operatorname{Log}\zeta)$ with
$-\pi<\arg\zeta<\pi$.

\begin{RHP}\label{RHP:app:model-positive}
Find a $3\times3$ matrix $M^X(\zeta;q)$ such that
\begin{itemize}
\item $M^X$ is analytic in $\mathbb C\setminus X$, has no poles,
and has continuous boundary values on $X\setminus\{0\}$.
\item The boundary values satisfy $M^X_+=M^X_-V^X$, where
\begin{equation}\label{app:model-jump}
V^X(\zeta;q)=
\begin{cases}
\begin{pmatrix}
1&-q\zeta^{-2i\nu}e^{i\zeta^2/2}&0\\
0&1&0\\0&0&1
\end{pmatrix},&\zeta\in X_1,\\[3mm]
\begin{pmatrix}
1&0&0\\
-\dfrac{\bar q}{d}\zeta^{2i\nu}e^{-i\zeta^2/2}&1&0\\
0&0&1
\end{pmatrix},&\zeta\in X_2,\\[3mm]
\begin{pmatrix}
1&\dfrac{q}{d}\zeta^{-2i\nu}e^{i\zeta^2/2}&0\\
0&1&0\\0&0&1
\end{pmatrix},&\zeta\in X_3,\\[3mm]
\begin{pmatrix}
1&0&0\\
\bar q\zeta^{2i\nu}e^{-i\zeta^2/2}&1&0\\0&0&1
\end{pmatrix},&\zeta\in X_4.
\end{cases}
\end{equation}
\item $M^X=I+O(\zeta^{-1})$ as $\zeta\to\infty$, and
$M^X=O(1)$ as $\zeta\to0$ in each sector.
\end{itemize}
\end{RHP}

\begin{lemma}\label{lem:app:model-positive}
For every $|q|<1$, RH problem~\ref{RHP:app:model-positive} has a
unique solution with determinant one. As $\zeta\to\infty$,
\begin{equation}\label{app:model-expansion}
M^X(\zeta;q)=I+\frac{M_1^X(q)}{\zeta}+O(\zeta^{-2}),\qquad
M_1^X(q)=
\begin{pmatrix}
0&\beta_{12}(q)&0\\
\beta_{21}(q)&0&0\\0&0&0
\end{pmatrix},
\end{equation}
where
\begin{equation}\label{app:model-betas}
\beta_{12}(q)=-\frac{\sqrt{2\pi}e^{\pi i/4}e^{-\pi\nu/2}}
{\bar q\Gamma(i\nu)},\qquad
\beta_{21}(q)=-\frac{\sqrt{2\pi}e^{-\pi i/4}e^{-\pi\nu/2}}
{q\Gamma(-i\nu)}.
\end{equation}
At $q=0$, both coefficients have continuous value zero and $M^X=I$.
The coefficients satisfy
\begin{equation*}
\beta_{21}(q)=\overline{\beta_{12}(q)},\qquad
|\beta_{12}(q)|^2=\nu(q).
\end{equation*}
For each fixed $0<q_*<1$, the remainder in
\eqref{app:model-expansion} is uniform for $|q|\leq q_*$,
including the one-sided limits on $X$, and
\begin{equation}\label{app:model-bound}
\sup_{|q|\leq q_*}\sup_{\zeta\in\mathbb C\setminus X}
\left(\|M^X(\zeta;q)\|+\|(M^X(\zeta;q))^{-1}\|\right)
\leq C(q_*).
\end{equation}
\end{lemma}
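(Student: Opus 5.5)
The model problem has only $2\times2$ structure in the $(1,2)$ block, so I will embed the classical parabolic-cylinder solution. I write $M^X=\operatorname{diag}(\mathsf M(\zeta;q),1)$, where $\mathsf M$ is a $2\times2$ matrix, and reduce to the scalar-phase problem. Set
\[
\Psi(\zeta)=\mathsf M(\zeta)\,\zeta^{i\nu\sigma_3}e^{-i\zeta^2\sigma_3/4}.
\]
Up to sign conventions, this turns the four triangular jumps on $X$ into constant jumps. Their product around the origin is the constant matrix $\begin{pmatrix}1&-q\\ \bar q&d\end{pmatrix}$, which is taken across the real axis after the lenses are collapsed.

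\textbf{Step 1 (reduction and ODE).} Since the jumps of $\Psi$ are $\zeta$-independent, $\Psi_\zeta\Psi^{-1}$ is entire. Liouville's theorem together with the $\zeta\to\infty$ normalization then gives
\[
\Psi_\zeta=\Bigl(-\tfrac{i\zeta}{2}\sigma_3+\tfrac{i}{2}[\sigma_3,\mathsf M_1]\Bigr)\Psi ,
\]
where $\mathsf M_1$ is the $\zeta^{-1}$ coefficient. The diagonal part of $\mathsf M_1$ drops out of the commutator, so only $\beta_{12}$ and $\beta_{21}$ enter. Eliminating the off-diagonal entries yields Weber's equation for $\Psi_{11}$ and $\Psi_{22}$. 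The solutions are therefore $D_{i\nu}(e^{-3\pi i/4}\zeta)$, $D_{-i\nu}(e^{-\pi i/4}\zeta)$ and their rotations in the four sectors, as in Deift--Zhou \cite{DeiftZhou1993} and \cite[Appendix~A]{HWW2026}.

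\textbf{Step 2 (coefficients).} The constants are fixed by matching the sectorial solutions across the rays. I will use the connection formula for $D_a$ and the Wronskian $W(D_a(z),D_a(-z))=\sqrt{2\pi}/\Gamma(-a)$. This gives $\beta_{12}\beta_{21}=\nu$, and yields the explicit values \eqref{app:model-betas} with the factors $e^{\pm\pi i/4}$ and $e^{-\pi\nu/2}$.

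The identity $\beta_{21}=\overline{\beta_{12}}$ follows from $\overline{\Gamma(i\nu)}=\Gamma(-i\nu)$. The identity $|\beta_{12}|^2=\nu$ follows from $|\Gamma(i\nu)|^2=\pi/(\nu\sinh\pi\nu)$ together with $|q|^2=1-e^{-2\pi\nu}$:
\[
|\beta_{12}|^2=\frac{2\pi e^{-\pi\nu}\,\nu\sinh(\pi\nu)}{\pi|q|^2}=\frac{\nu(1-e^{-2\pi\nu})}{|q|^2}=\nu .
\]
As $q\to0$ we have $\nu\sim|q|^2/(2\pi)$ and $1/\Gamma(i\nu)\sim i\nu$, so $\beta_{12}=O(|q|)\to0$. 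At $q=0$ all jumps are the identity and $M^X=I$.

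\textbf{Step 3 (uniqueness and determinant).} Every jump has determinant one, so $\det M^X$ is entire, bounded near $0$ (sectorially bounded), and tends to $1$. Hence $\det M^X\equiv1$. If $\widetilde M$ is a second solution, then $\widetilde M (M^X)^{-1}$ has no jumps and is bounded at $0$, so Liouville's theorem gives uniqueness. Consequently the $(3,3)$ block equals $1$.

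\textbf{Step 4 (uniformity in $q$, the main obstacle).} The explicit functions $\zeta^{\mp i\nu\sigma_3}$ and the parabolic-cylinder functions depend continuously on $\nu\in[0,\nu(q_*)]$. The large-$\zeta$ asymptotic expansions of $D_a$ are uniform for $a$ in compact sets and along sectors bounded away from the Stokes rays. Near $\zeta=0$ the functions are entire in $\zeta$, and the factor $|\zeta^{\pm i\nu}|\le e^{\pi\nu}$ is bounded.

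Combining the two regimes on $|\zeta|\le R$ and $|\zeta|\ge R$ gives \eqref{app:model-bound} and the uniform $O(\zeta^{-2})$ remainder, including the one-sided limits on $X$. The bound for the inverse follows from $\det M^X=1$. The care needed here is in the bookkeeping: tracking the branch cut $\arg\zeta\in(-\pi,\pi)$ consistently against the sector in which each jump lives, so that the constants do not degenerate as $\nu\to0$.
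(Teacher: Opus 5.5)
Your proposal is correct and takes essentially the same route as the paper. The paper invokes the parabolic-cylinder construction of \cite[Appendix~A]{HWW2026} for the $(1,2)$ block and then, as you do, uses Liouville's theorem for $\det M^X=1$ and uniqueness, the identity $|\Gamma(i\nu)|^2=\pi/(\nu\sinh\pi\nu)$ together with $|q|^2=1-e^{-2\pi\nu}$ to get $|\beta_{12}|^2=\nu$, the expansion $1/\Gamma(i\nu)=i\nu+O(\nu^2)$ for the limit $q\to0$, and compactness of the range of orders for the uniform bounds.

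There is one cosmetic slip in your Step 1. The conjugation that makes the jumps constant is $\Psi=\mathsf M\,\zeta^{-i\nu\sigma_3}e^{i\zeta^2\sigma_3/4}$, not $\zeta^{i\nu\sigma_3}e^{-i\zeta^2\sigma_3/4}$, and this reverses both signs in your displayed ODE. Relatedly, $\begin{pmatrix}1&-q\\ \bar q&d\end{pmatrix}$ is the collapsed jump across $\mathbb R$, for instance the product of the two jumps adjacent to $\mathbb R_+$. It is not the product around the origin: that cyclic product equals $I$ once the constant branch jump of $\zeta^{-i\nu\sigma_3}$ on $\mathbb R_-$ is included.
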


\begin{proof}
The parabolic-cylinder construction for RH problem~A.1 of
\cite{HWW2026}, with its parameter equal to $q$, gives the solution
of \eqref{app:model-jump}. Its third row and column are those of
the identity. The large-argument expansion of the parabolic-cylinder
functions gives \eqref{app:model-expansion}--\eqref{app:model-betas}.
Since the jumps have determinant one and the solution is bounded
at zero, its determinant extends to an entire function tending
to one at infinity. Thus $\det M^X=1$. The quotient of any two
solutions is then entire and tends to $I$, which proves uniqueness.

The gamma-function identity
\begin{equation*}
|\Gamma(i\nu)|^2=\frac{\pi}{\nu\sinh(\pi\nu)},\qquad \nu>0,
\end{equation*}
together with $|q|^2=1-e^{-2\pi\nu}$, gives the stated modulus.
Conjugating \eqref{app:model-betas} gives the other coefficient.
As $q\to0$, $\nu=|q|^2/(2\pi)+O(|q|^4)$ and
$1/\Gamma(i\nu)=i\nu+O(\nu^2)$; hence
\begin{equation*}
\beta_{12}(q)=\frac{e^{-\pi i/4}}{\sqrt{2\pi}}q+O(|q|^3).
\end{equation*}
The coefficients in the explicit construction extend continuously
through $q=0$, where the jumps reduce to $I$ and the solution is $I$.

For $|q|\leq q_*<1$, the orders of the parabolic-cylinder functions
range over a compact set. Their large-argument expansions are
uniform on the corresponding closed sectors. On bounded argument
sets the explicit functions and their regularized coefficients are
bounded uniformly, while
$|\zeta^{\pm i\nu}|\leq e^{\pi\nu}$ on the chosen branch.
These estimates bound $M^X$ uniformly up to zero within each sector
and on both sides of $X$. The cofactor formula and
$\det M^X=1$ give the inverse bound in \eqref{app:model-bound}.
\end{proof}
For the negative stationary point we use the same branch and the
cofactor normalization.
\begin{RHP}\label{RHP:app:model-cofactor}
Find a $3\times3$ matrix $M^{X,c}(\zeta;q)$ such that
\begin{itemize}
\item $M^{X,c}$ is analytic in $\mathbb C\setminus X$, has no poles,
and has continuous boundary values on $X\setminus\{0\}$.
\item Its boundary values satisfy
$M^{X,c}_+=M^{X,c}_-V^{X,c}$, where
\begin{equation*}
V^{X,c}(\zeta;q)=
\begin{cases}
\begin{pmatrix}
1&0&0\\q\zeta^{-2i\nu}e^{i\zeta^2/2}&1&0\\0&0&1
\end{pmatrix},&\zeta\in X_1,\\[3mm]
\begin{pmatrix}
1&\dfrac{\bar q}{d}\zeta^{2i\nu}e^{-i\zeta^2/2}&0\\
0&1&0\\0&0&1
\end{pmatrix},&\zeta\in X_2,\\[3mm]
\begin{pmatrix}
1&0&0\\-\dfrac{q}{d}\zeta^{-2i\nu}e^{i\zeta^2/2}&1&0\\
0&0&1
\end{pmatrix},&\zeta\in X_3,\\[3mm]
\begin{pmatrix}
1&-\bar q\zeta^{2i\nu}e^{-i\zeta^2/2}&0\\
0&1&0\\0&0&1
\end{pmatrix},&\zeta\in X_4.
\end{cases}
\end{equation*}
\item $M^{X,c}(\zeta;q)=I+O(\zeta^{-1})$ as $\zeta\to\infty$.
\item $M^{X,c}(\zeta;q)=O(1)$ as $\zeta\to0$ in each sector.
\end{itemize}
\end{RHP}

RH problem~\ref{RHP:app:model-cofactor} has the unique solution
\begin{equation}\label{app:model-cofactor}
M^{X,c}(\zeta;q)=M^X(\zeta;q)^{-T}
=I-\frac{M_1^X(q)^T}{\zeta}+O(\zeta^{-2}).
\end{equation}
Indeed, $V^{X,c}=(V^X)^{-T}$, and inverse transposition preserves
the order in the right jump relation. The determinant, boundedness
and uniformity statements follow from
Lemma~\ref{lem:app:model-positive}.

\begin{figure}[htbp]
\centering
\begin{tikzpicture}[>=stealth,scale=0.88,
 modelarrow/.style={line width=0.8pt,postaction={decorate},
 decoration={markings,mark=at position 0.60 with {\arrow{stealth}}}},
 every node/.style={font=\small}]
\foreach \ang/\lab in {45/1,135/2,225/3,315/4}{
\draw[modelarrow] (0,0)--(\ang:2.05);
\node at (\ang:2.32) {$X_{\lab}$};}
\fill (0,0) circle (1.1pt);
\node[below] at (0,-0.06) {$0$};
\end{tikzpicture}
\caption{The oriented cross $X$ for the model RH problems.}
\label{fig:app:model-cross}
\end{figure}

In Section~\ref{s:jump-contribution}, take $q=r_{\lambda_1}$.
The models in the two local coordinates are identified by
\begin{equation}\label{app:model-identification}
\begin{aligned}
M^{pc}_{1,0}(\zeta;r_{\lambda_1})&=M^X(\zeta;r_{\lambda_1}),\\
\zeta_-(\lambda)&=-\kappa_0\sqrt t(\lambda+a),\qquad
M^{pc}_{2,0}(\zeta_-)=M^{X,c}(\zeta_-;r_{\lambda_1}).
\end{aligned}
\end{equation}
In particular, substituting $\zeta_-$ in \eqref{app:model-cofactor}
gives
\begin{equation*}
M^{pc}_{2,0}(\zeta_-(\lambda))
=I+\frac{M_1^X(r_{\lambda_1})^T}
{\kappa_0\sqrt t(\lambda+a)}+O(t^{-1})
\end{equation*}
on the fixed circle about $-a$. Thus \eqref{app:model-identification}
gives exactly the two coefficient matrices in
\eqref{sec6:localCoefficientMatrices}; the remaining four points
follow by \eqref{rotatedparametrices}.

	\bibliographystyle{plain}

\end{document}